\documentclass{amsart}

\usepackage{latexsym}
\usepackage{amssymb, latexsym}
\usepackage{amsmath}
\usepackage{mathrsfs}
\usepackage{amsxtra}
\usepackage{diagrams}
\usepackage{amsthm}
\usepackage{verbatim}
\newtheorem{theorem}{Theorem}[section]

\newtheorem{lemma}[theorem]{Lemma}
\newtheorem{proposition}[theorem]{Proposition}

\newtheorem*{main}{Main~Theorem}
\theoremstyle{definition}
\newtheorem{definition}[theorem]{Definition}
\newtheorem{question}[theorem]{Question}

\newcommand{\Erdos}{Erd\H os}
\newcommand{\Konig}{K\"{o}nig}

\newcommand{\image}{\mathbin{\hbox{\tt\char'42}}}

\newcommand{\Union}{\bigcup}
\newcommand{\union}{\cup}
\newcommand{\of}{\subseteq}
\newcommand{\lt}[1]{{\smalllt}#1}
\newcommand{\lesseq}[1]{{\smallleq}#1}
\newcommand{\smallleq}{\mathrel{\mathchoice{\raise2pt\hbox{$\scriptstyle\leq$}}{\raise1pt\hbox{$\scriptstyle\leq$}}{\raise1pt\hbox{$\scriptscriptstyle\leq$}}{\scriptscriptstyle\leq}}}
\newcommand{\smalllt}{\mathrel{\mathchoice{\raise2pt\hbox{$\scriptstyle<$}}{\raise1pt\hbox{$\scriptstyle<$}}{\raise0pt\hbox{$\scriptscriptstyle<$}}{\scriptscriptstyle<}}}
\newcommand{\Add}{\mathop{\rm Add}}
\newcommand{\ltkappa}{{{\smalllt}\kappa}}

\newcommand{\GCH}{{\rm GCH}}
\newcommand{\ORD}{\mathop{{\rm ORD}}}
\newcommand{\ZFC}{{\rm ZFC}}

\newcommand{\one}{\mathop{1\hskip-2.5pt {\rm l}}}

\newcommand{\p}{\mathbb{P}}
\newcommand{\q}{\mathbb{Q}}

\newcommand{\la}{\langle}
\newcommand{\ra}{\rangle}

\newcommand{\tail}{\text{tail}}
\newcommand{\forces}{\Vdash}
\newcommand{\restrict}{\upharpoonright}
\newcommand{\supp}{\text{supp}}
\newcommand{\Coll}{\mathop{\rm Coll}}
\newcommand{\from}{\mathbin{\vbox{\baselineskip=2pt\lineskiplimit=0pt
                          \hbox{.}\hbox{.}\hbox{.}}}}

\begin{document}
\title{Indestructibility properties of remarkable cardinals}
\author[Yong Cheng]{Yong Cheng}
\address[Yong Cheng]{Institute of Mathematical Logic and Fundamental Re-
search, Einsteinstr 62, 48149 Muenster, Germany}
\email[Y. ~Cheng]{world-cyr@hotmail.com}

\author[Victoria Gitman]{Victoria Gitman}
\address[Victoria Gitman]{
CUNY Graduate Center, 365 Fifth Avenue, New York, NY, 10016, USA}
\email[V. ~Gitman]{vgitman@nylogic.org}
\urladdr{http://boolesrings.org/victoriagitman/}

\maketitle
\begin{abstract}
Remarkable cardinals were introduced by Schindler, who showed that the existence of a remarkable cardinal is equiconsistent with the assertion that the theory of $L(\mathbb R)$ is absolute for proper forcing~\cite{schindler:remarkable1}. Here, we study the indestructibility properties of remarkable cardinals. We show that if $\kappa$ is remarkable, then there is a forcing extension in which the remarkability of $\kappa$ becomes indestructible by all $\ltkappa$-closed $\lesseq\kappa$-distributive forcing and all two-step iterations of the form \hbox{$\Add(\kappa,\theta)*\dot{\mathbb R}$}, where $\dot{\mathbb R}$ is forced to be $\ltkappa$-closed and $\lesseq\kappa$-distributive. In the process, we introduce the notion of a \emph{remarkable Laver function} and show that every remarkable cardinal carries such a function. We also show that remarkability is preserved by the canonical forcing of the $\GCH$.
\end{abstract}
\section{Introduction}
Since the seminal results of Levy and Solovay~\cite{levysolovay:ch} on the indestructibility of large cardinals by small forcing and Laver on making a supercompact cardinal $\kappa$ indestructible by all $\lt\kappa$-directed closed forcing~\cite{laver:supercompact}, indestructibility properties of various large cardinal notions have been intensively studied. A decade after Laver's result, Gitik and Shelah showed that a strong cardinal $\kappa$ can be made indestructible by all weakly $\lesseq\kappa$-closed forcing with the Prikry property~\cite{GitikShelah:IndestructibleStrongCardinals}, a class that includes all $\lesseq\kappa$-closed forcing, and Woodin showed, using his technique of surgery, that it can be made indestructible by forcing of the form $\Add(\kappa,\theta)$.\footnote{The poset $\Add(\kappa,\theta)$, where $\kappa$ is an infinite cardinal and $\theta$ is any cardinal, adds $\theta$-many Cohen subsets to $\kappa$, using conditions of size less than $\kappa$.} More recently, Hamkins and Johnstone showed that a strongly unfoldable cardinal $\kappa$ can be made indestructible by all $\lt\kappa$-closed $\kappa^+$-preserving forcing~\cite{hamkinsjohnstone:unfoldable}. It turns out that not all large cardinals possess robust indestructibility properties. Very recently, Bagaria et al. showed that a number of large cardinal notions including superstrong, huge, and rank-into-rank cardinals are superdestructible: such $\kappa$ cannot even be indestructible by $\Add(\kappa,1)$~\cite{BagariaHamkinsTsaprounisUsuba:SuperstrongAndOtherLargeCardinalsAreNeverLaverIndestructible}. In this article, we show that remarkable cardinals have indestructibility properties resembling those of strong cardinals. A remarkable $\kappa$ be made indestructible by all $\ltkappa$-closed $\lesseq\kappa$-distributive forcing and by all two-step iterations of the form $\Add(\kappa,\theta)*\dot{\mathbb R}$, where $\dot{\mathbb R}$ is forced to be $\ltkappa$-closed and $\lesseq\kappa$-distributive.
\begin{main}
If $\kappa$ is remarkable, then there is a forcing extension in which the remarkability of $\kappa$ becomes indestructible by all $\ltkappa$-closed $\lesseq\kappa$-distributive forcing and by all two-step iterations $\Add(\kappa,\theta)*\dot{\mathbb R}$, where $\dot{\mathbb R}$ is forced to be $\ltkappa$-closed and $\lesseq\kappa$-distributive.
\end{main}
\noindent In particular, a remarkable $\kappa$ can be made indestructible by all $\lesseq\kappa$-closed forcing, and since $\dot {\mathbb R}$ can be trivial, by all forcing of the form $\Add(\kappa,\theta)$. One application of the main theorem is that any $\GCH$ pattern can be forced above a remarkable cardinal. Another application uses a recent forcing construction of~\cite{ChengFriedmanHamkins:LargeCardinalsNeedNotBeLargeInHOD}, to produce a remarkable cardinal that is not weakly compact in ${\rm HOD}$. Using techniques from the proof of the main theorem, we also show that remarkability is preserved by the canonical forcing of the $\GCH$.

For the indestructibility arguments, we define the notion of a \emph{remarkable Laver function} and show that every remarkable cardinal carries such a function. Although Laver-like functions can be forced to exist for many large cardinal notions~\cite{Hamkins:LaverDiamond}, remarkable cardinals along with supercompact and strong cardinals are some of the few that outright possess such fully set-anticipating functions. For instance, not every strongly unfoldable cardinal has a Laver-like function because it is consistent that $\diamondsuit_\kappa({\rm REG})$ fails at a strongly unfoldable cardinal~\cite{DzamonjaHamkins2006:DiamondCanFail}.

Schindler introduced remarkable cardinals when he isolated them as the large cardinal notion whose existence is equiconsistent with the assertion that the theory of $L(\mathbb R)$ cannot be altered by proper forcing. The assertion that the theory of $L(\mathbb R)$ is absolute for all set forcing is intimately connected with ${\rm AD}^{L(\mathbb R)}$ and its consistency strength lies in the neighborhood of infinitely many Woodin cardinals~\cite{schindler:remarkable1}. In contrast, remarkable cardinals are much weaker than measurable cardinals, and indeed they can exist in $L$. Consistency-wise, they fit tightly into the $\alpha$-iterable hierarchy of large cardinal notions (below a Ramsey cardinal) introduced by Gitman and Welch~\cite{gitman:welch}, where they lie above 1-iterable, but below 2-iterable cardinals, placing them above hierarchies of ineffability, but much below an $\omega$-\Erdos\ cardinal. Remarkable cardinals are also totally indescribable and $\Sigma_2$-reflecting. Strong cardinals are remarkable, but the least measurable cardinal cannot be remarkable  by $\Sigma_2$-reflection.
\begin{definition}
A cardinal $\kappa$ is \emph{remarkable} if in the $\Coll(\omega,\ltkappa)$ forcing extension $V[G]$, for every regular cardinal $\lambda>\kappa$, there is a $V$-regular cardinal $\overline\lambda<\kappa$ and $j:H_{\overline\lambda}^V\to H_\lambda^V$ with critical point $\gamma$ such that $j(\gamma)=\kappa$.
\end{definition}
\noindent Schindler originally used a different primary characterization of remarkable cardinals (see Theorem~\ref{th:equivalentRemarkable} in the next section), but he has recently switched to using the above characterization, which gives remarkable cardinals a character of generic supercompactness~\cite{schindler:remarkable}. Magidor showed that $\kappa$ is supercompact if and only if for every regular cardinal $\lambda>\kappa$, there is a regular cardinal $\overline \lambda<\kappa$ and  $j:H_{\overline\lambda}\to H_\lambda$ with critical point $\gamma$ such that  $j(\gamma)=\kappa$~\cite{magidor:supercompact}.

More background material on remarkable cardinals is presented in Section~\ref{sec:preliminaries}. Remarkable Laver functions are introduced in Section~\ref{sec:LaverFunction}. All the indestructibility results are proved in Section~\ref{sec:mainTheorem}, and some applications of indestructibility are given in Section~\ref{sec:applications}. The final Section~\ref{sec:questions} lists some remaining open questions concerning indestructibility properties of remarkable cardinals.
\section{Preliminaries}\label{sec:preliminaries}
Most indestructibility arguments for large cardinals rely on their characterizations in terms of the existence of some kind of elementary embeddings $j:M\to N$ between transitive models of (fragments of) set theory. The large cardinal property is verified in a forcing extension $V[G]$ by \emph{lifting}, meaning extending, the embedding $j$ to \hbox{$j:M[G]\to N[H]$}. The success of this strategy is based on the \emph{Lifting Criterion} theorem (Proposition 9.1 in \cite{cummings:handbook}) that provides a sufficient condition for the existence of an $N$-generic filter $H$ necessary to carry out the lift.
\begin{theorem}[Lifting Criterion]
Suppose that $M$ is a model of $\ZFC^-$ and \hbox{$j:M\to N$} is an elementary embedding.\footnote{The theory $\ZFC^-$ consists of the axioms of $\ZFC$ excluding the powerset axiom and including collection instead of replacement. Canonical models of $\ZFC^-$ are $H_\theta$, where $\theta$ is a regular cardinal.} If $M[G]$ and $N[H]$ are generic extensions by forcing notions $\p$ and $j(\p)$ respectively, then the embedding $j$ lifts to an elementary embedding $j:M[G]\to N[H]$ with $j(G)=H$ if and only if $j\image G\of H$.
\end{theorem}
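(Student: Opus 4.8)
The plan is to prove the forward direction by directly unwinding the definitions, and to handle the substantive reverse direction by explicitly constructing the lift from $\p$-names. For the forward direction, suppose the lift $\hat{j}:M[G]\to N[H]$ with $\hat{j}(G)=H$ exists. Since $\hat{j}$ extends $j$, we have $\hat{j}(p)=j(p)$ for every $p\in G$; and since $p\in G$ and $\hat{j}$ preserves the membership relation on $M[G]$, it follows that $j(p)=\hat{j}(p)\in\hat{j}(G)=H$. Hence $j\image G\of H$, as desired.

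For the reverse direction I would fix the hypothesis $j\image G\of H$ and define the lift on names. Every element of $M[G]$ has the form $\tau_G$ for some $\p$-name $\tau\in M$, and $j(\tau)$ is then a $j(\p)$-name in $N$, so the natural attempt is to set $\hat{j}(\tau_G):=j(\tau)_H$. The crux is the transfer claim: for every metatheoretic formula $\varphi$ and all $\p$-names $\tau_1,\dots,\tau_n\in M$,
\[
M[G]\models\varphi\big((\tau_1)_G,\dots,(\tau_n)_G\big)\ \Longrightarrow\ N[H]\models\varphi\big(j(\tau_1)_H,\dots,j(\tau_n)_H\big).
\]
I would prove this by applying the Truth Lemma in $M$ to obtain $p\in G$ with $p\forces_\p\varphi(\tau_1,\dots,\tau_n)$; since the forcing relation for the fixed formula $\varphi$ is definable over $M$ (uniformly in the parameters) and $j$ is elementary, $N$ satisfies $j(p)\forces_{j(\p)}\varphi(j(\tau_1),\dots,j(\tau_n))$; and since $j(p)\in j\image G\of H$, the Truth Lemma in $N$ yields $N[H]\models\varphi(j(\tau_1)_H,\dots,j(\tau_n)_H)$. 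Specializing $\varphi$ to the equality relation shows $\hat{j}$ is well defined, and applying the claim to $\varphi$ and then to $\neg\varphi$ upgrades the implication to a biconditional, so $\hat{j}:M[G]\to N[H]$ is elementary.

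It then remains to check that $\hat{j}$ genuinely extends $j$ and that $\hat{j}(G)=H$. I would compute $\hat{j}$ on canonical names: for $x\in M$ we get $\hat{j}(x)=\hat{j}(\check x_G)=j(\check x)_H$, and since $x\mapsto\check x$ is definable, elementarity gives $j(\check x^\p)=\check{(j(x))}^{j(\p)}$, whose $H$-evaluation is $j(x)$; for $G$ itself, the canonical name $\dot G=\{\la\check p,p\ra:p\in\p\}$ satisfies $\dot G_G=G$, and by elementarity $j(\dot G)=\{\la\check q,q\ra:q\in j(\p)\}$, whose $H$-evaluation is exactly $H$, so $\hat{j}(G)=H$.

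The real work is concentrated in the transfer claim, and the one delicate point there is a bookkeeping subtlety: $\varphi$ must be kept fixed in the metatheory, so that ``$p$ forces $\varphi(\tau_1,\dots,\tau_n)$'' is a single first-order assertion about $M$ with parameters $p,\tau_1,\dots,\tau_n$, which is precisely what licenses pushing it through the elementary $j$; and one should recall that the Definability and Truth Lemmas for set forcing are available over the weaker base theory $\ZFC^-$, and that $N\models\ZFC^-$ as well since $j$ is elementary and $M\models\ZFC^-$. Once these are in place, there is no further obstacle.
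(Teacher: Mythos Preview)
Your proof is correct and is the standard argument for the Lifting Criterion. Note, however, that the paper does not supply its own proof of this statement: it simply records the theorem and attributes it to Proposition~9.1 of Cummings' handbook chapter. So there is nothing in the paper to compare against; your write-up is essentially the textbook proof that the citation points to, including the careful remark that the formula $\varphi$ must be held fixed in the metatheory so that definability of the forcing relation yields a first-order assertion to which the elementarity of $j$ applies.
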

\noindent The following simple proposition is used often in lifting arguments to obtain a generic filter satisfying the lifting criterion.
\begin{proposition}\label{prop:cpSizeImage}
Suppose that $M$ and $N$ are transitive models of $\ZFC^-$ and \hbox{$j:M\to N$} is an elementary embedding with critical point $\gamma$. If $A$ has size $\gamma$ in $M$, then $j\image A\in N$.
\end{proposition}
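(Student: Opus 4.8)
The plan is to exploit a surjection witnessing $|A|^M \leq \gamma$ together with the fact that $j$ fixes every ordinal below its critical point. First, working in $M$, fix a surjection $f\colon\gamma\to A$ (such an $f$ exists in $M$ since $A$ has size $\gamma$ there; if $A=\emptyset$ then $j\image A=\emptyset\in N$ and we are done). By elementarity, $j(f)\colon j(\gamma)\to j(A)$ is a surjection in $N$, and for every $\alpha<\gamma$ we have the usual commutation $j(f)(j(\alpha))=j(f(\alpha))$.

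The key observation is that $\gamma$ is the critical point of $j$, so $j(\alpha)=\alpha$ for all $\alpha<\gamma$. Hence for each $\alpha<\gamma$,
\[
j(f(\alpha))=j(f)(j(\alpha))=j(f)(\alpha),
\]
and therefore
\[
j\image A=\{\,j(f(\alpha)):\alpha<\gamma\,\}=\{\,j(f)(\alpha):\alpha<\gamma\,\}=\big(j(f)\big)\image\gamma.
\]
Thus $j\image A$ is realized inside $N$ as the pointwise image of the ordinal $\gamma$ under the function $j(f)$.

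It remains to check that $\big(j(f)\big)\image\gamma\in N$. Since $\gamma$ is an ordinal with $\gamma<j(\gamma)\in N$ and $N$ is transitive, we have $\gamma\in N$. Then $j(f)\restrict\gamma$ is a definable subset of the set $j(f)\in N$ with all parameters in $N$, so it lies in $N$ by Separation, and its range $\big(j(f)\big)\image\gamma$ lies in $N$ by Replacement — equivalently, by Collection together with Separation — all of which are available in the $\ZFC^-$ model $N$. Therefore $j\image A=\big(j(f)\big)\image\gamma\in N$.

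There is no genuine obstacle in this argument; the only points demanding a little care are that the verification must be carried out inside the $\ZFC^-$ model $N$ (so one invokes Collection/Separation rather than full Replacement, since $\ZFC^-$ omits powerset and uses collection), and that one remembers it is $\gamma$ itself, not merely $j(\gamma)$, that must be seen to belong to $N$.
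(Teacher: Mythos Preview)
Your proof is correct and follows essentially the same approach as the paper: fix a function $f:\gamma\to A$ in $M$ (you use a surjection, the paper a bijection, but this is immaterial), observe that $j\image A = j(f)\image\gamma$, and conclude since both $j(f)$ and $\gamma$ lie in $N$. Your version is simply more explicit about why the image set can be formed inside the $\ZFC^-$ model $N$.
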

\begin{proof}
Fix a bijection $f:\gamma\xrightarrow[\text{onto}]{1-1} A$ in $M$. It is easy to see that $j\image A=j(f)\image\gamma$, and both $j(f)$ and $\gamma$ are elements of $N$.
\end{proof}

Getting back to remarkable cardinals, let's recall that a cardinal $\kappa$ is remarkable if in the $\Coll(\omega,\ltkappa)$ forcing extension $V[G]$, for every regular cardinal $\lambda>\kappa$, there is a $V$-regular cardinal $\overline\lambda<\kappa$ and $j:H_{\overline\lambda}^V\to H_\lambda^V$ with critical point $\gamma$ such that $j(\gamma)=\kappa$.\footnote{In the remainder of the article, $H_\theta$ and $V_\theta$ will always refer to ground model objects, a convention which allows us to drop the superscript $V$. The $H_\theta$ and $V_\theta$ of a forcing extension $V[G]$ will be referred to by $H_\theta^{V[G]}$ and $V_\theta^{V[G]}$ respectively.} This characterization of remarkable cardinals admits several useful generalizations. For the remainder of this discussion, we suppose that $G\subseteq \Coll(\omega,\ltkappa)$ is $V$-generic.

Fix some regular $\lambda>\kappa$. In $V[G]$, let $j:H_{\overline\lambda}\to H_\lambda$, where $\overline\lambda<\kappa$ is $V$-regular, $\text{cp}(j)=\gamma$, and $j(\gamma)=\kappa$. Let $G_\gamma$ be the restriction of $G$ to the sub-product $\prod_{\xi<\gamma}\Coll(\omega,\xi)$ of $\Coll(\omega,\ltkappa)$. Now observe that, by the lifting criterion, we can lift $j$ to the elementary embedding
\begin{displaymath}
j:\la H_{\overline\lambda}[G_\gamma],H_{\overline\lambda}\ra\to\la H_\lambda[G],H_\lambda\ra\text{ with }j(G_\gamma)=G.
\end{displaymath}
Let's make the following general definition to eliminate a cumbersome repetition of hypothesis from future arguments.
\begin{definition}
In a $\Coll(\omega,\ltkappa)$-forcing extension $V[G]$, let us say that an elementary embedding
\begin{displaymath}
j:H_{\overline\lambda}\to H_\lambda
\end{displaymath}
is $(\mu,\overline\lambda,\xi,\lambda)$-\emph{remarkable}
if $\lambda>\xi$ and $\overline\lambda<\xi$ are $V$-regular, $\text{cp}(j)=\mu$, and $j(\mu)=\xi$. Let us also say that
\begin{displaymath}
j:\la H_{\overline\lambda}[G_\mu],H_{\overline\lambda}\ra\to\la H_\lambda[G_\xi],H_\lambda\ra
\end{displaymath}
is $(\mu,\overline\lambda,\xi,\lambda)$-\emph{very remarkable} if $j\restrict H_{\overline\lambda}:H_{\overline\lambda}\to H_\lambda$ is remarkable and $j(G_\mu)=G_\xi$.
\end{definition}
\noindent By definition, every $(\mu,\overline\lambda,\xi,\lambda)$-very remarkable embedding restricts to a $(\mu,\overline\lambda,\xi,\lambda)$-remarkable embedding, and, by the lifting criterion, every $(\mu,\overline\lambda,\xi,\lambda)$-remarkable embedding lifts to a $(\mu,\overline\lambda,\xi,\lambda)$-very remarkable embedding.

Fixing a regular $\lambda>\kappa$, we can ask whether there are $(\gamma,\overline\lambda,\kappa,\lambda)$-remarkable embeddings in $V[G]$ with $\gamma$ arbitrarily high in $\kappa$. More generally, we can ask whether any $a\in H_\lambda$ must appear in the image of some $(\gamma,\overline\lambda,\kappa,\lambda)$-remarkable embedding, because if $\xi<\kappa$ appears in the range of a $(\gamma,\overline\lambda,\kappa,\lambda)$-remarkable $j$, then $\gamma>\xi$. A positive answer follows from the following stronger result. Let us say that $X\subseteq H_\lambda[G]$ is $(\kappa,\lambda)$-\emph{remarkable} if it is the range of some $(\gamma,\overline\lambda,\kappa,\lambda)$-very remarkable embedding. Note that $(\kappa,\lambda)$-remarkable $X$ are countable in $V[G]$.
\begin{proposition}[\cite{schindler:remarkable}]\label{prop:StatManyEmbeddings}
If $\kappa$ is remarkable, then in a $\Coll(\omega,\ltkappa)$ forcing extension $V[G]$, for every regular cardinal $\lambda>\kappa$, the collection of all $(\kappa,\lambda)$-remarkable $X$ is stationary in $[H_\lambda[G]]^\omega$.
\end{proposition}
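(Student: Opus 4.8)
The plan is to argue by contradiction, via the standard characterisation of stationarity: a family $S\of[H_\lambda[G]]^\omega$ is stationary precisely when for every $F\colon[H_\lambda[G]]^{<\omega}\to H_\lambda[G]$ some countably infinite $F$-closed set belongs to $S$. So it is enough to show that in $V[G]$, for every regular $\lambda>\kappa$ and every such $F$, there is a $(\kappa,\lambda)$-remarkable $X$ closed under $F$. Assuming this fails for some $\lambda$, I would let $\lambda$ be the \emph{least} regular cardinal above $\kappa$ for which it fails (the regular cardinals above $\kappa$ are well ordered, and $\Coll(\omega,\ltkappa)$, being $\kappa$-c.c., preserves regularity above $\kappa$, so this is unambiguous) and fix a witnessing $F$. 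Taking $\lambda$ least is the crucial move: it makes $\lambda$ \emph{definable} from $\kappa$ and $G$ over a large enough structure, and a definable element is automatically carried into the range of any elementary embedding into that structure — in particular into the range of the embeddings handed over by remarkability, without any need to choose those embeddings.

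Next I would fix a regular cardinal $\lambda^*$ large enough that $\la H_{\lambda^*}[G],H_{\lambda^*}\ra$ correctly computes $[H_\mu[G]]^\omega$, the functions on $[H_\mu[G]]^{<\omega}$, the countably infinite subsets of $H_\mu[G]$, and the relation ``$X$ is $(\kappa,\mu)$-remarkable'' for every regular $\mu\le\lambda$ (any $\lambda^*>2^{2^{<\lambda}}$ works). Then $\la H_{\lambda^*}[G],H_{\lambda^*}\ra$ satisfies the sentence $\Phi$ asserting that there is a \emph{unique} $V$-regular cardinal $x>\kappa$ that is least such that the $(\kappa,x)$-remarkable sets are non-stationary in $[H_x[G]]^\omega$; and, by the correctness just arranged, that unique $x$ is $\lambda$. (The ground-model predicate for $H_{\lambda^*}$ is what lets one express $V$-regularity and unfold the two-sorted notion of a very remarkable embedding inside $\Phi$.) Applying the remarkability of $\kappa$ at $\lambda^*$ and lifting through the Lifting Criterion, I obtain in $V[G]$ a $(\gamma,\overline{\lambda^*},\kappa,\lambda^*)$-very remarkable embedding
\begin{displaymath}
j\colon\la H_{\overline{\lambda^*}}[G_\gamma],H_{\overline{\lambda^*}}\ra\longrightarrow\la H_{\lambda^*}[G],H_{\lambda^*}\ra,\qquad j(\gamma)=\kappa,\quad j(G_\gamma)=G,
\end{displaymath}
with critical point $\gamma$; in particular $j$ is elementary for the two-sorted language and sends $\gamma$ to $\kappa$ and $G_\gamma$ to $G$.

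The argument then closes by combining elementarity with the uniqueness clause of $\Phi$. Pulling $\Phi$ back along $j$ yields $\overline\mu<\overline{\lambda^*}$, the unique witness on the source side, so in $\la H_{\overline{\lambda^*}}[G_\gamma],H_{\overline{\lambda^*}}\ra$ the ordinal $\overline\mu$ is $V$-regular with $\gamma<\overline\mu$ and is least such that the analogous non-stationarity (computed relative to $\gamma$ and $G_\gamma$) holds; and since $j$ carries the unique witness to the unique witness, $j(\overline\mu)=\lambda$. The reflected sentence also supplies a function $\overline F\colon[H_{\overline\mu}[G_\gamma]]^{<\omega}\to H_{\overline\mu}[G_\gamma]$ such that no countably infinite $\overline F$-closed subset of $H_{\overline\mu}[G_\gamma]$ is a very remarkable range at level $\overline\mu$; by elementarity and the correctness of $\la H_{\lambda^*}[G],H_{\lambda^*}\ra$, the function $j(\overline F)\colon[H_\lambda[G]]^{<\omega}\to H_\lambda[G]$ genuinely has the property that no countably infinite $j(\overline F)$-closed subset of $H_\lambda[G]$ is $(\kappa,\lambda)$-remarkable. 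I would then consider $X_0:=j\image H_{\overline\mu}[G_\gamma]$. Since $j(\overline\mu)=\lambda$ and $j(G_\gamma)=G$ we get $j(H_{\overline\mu}[G_\gamma])=H_\lambda[G]$ and $j(H_{\overline\mu})=H_\lambda$, so $j\restrict H_{\overline\mu}[G_\gamma]$ is an elementary embedding of $\la H_{\overline\mu}[G_\gamma],H_{\overline\mu}\ra$ into $\la H_\lambda[G],H_\lambda\ra$ with critical point $\gamma$, $j(\gamma)=\kappa$, $j(G_\gamma)=G$ — i.e., it is $(\gamma,\overline\mu,\kappa,\lambda)$-very remarkable — so $X_0$ is $(\kappa,\lambda)$-remarkable and, since $H_{\overline\mu}[G_\gamma]$ is, countably infinite in $V[G]$. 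But $X_0$ is closed under $j(\overline F)$: any $e\in[X_0]^{<\omega}$ is $j(\overline e)$ for some $\overline e\in[H_{\overline\mu}[G_\gamma]]^{<\omega}$, so $j(\overline F)(e)=j\bigl(\overline F(\overline e)\bigr)\in X_0$. This contradicts the displayed property of $j(\overline F)$, completing the proof.

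I expect the main obstacle to be the bookkeeping around $H_{\overline{\lambda^*}}$, which is only a model of $\ZFC^-$: one must check that $H_{\overline\mu}[G_\gamma]$, $H_{\overline\mu}$, and $[H_{\overline\mu}[G_\gamma]]^{<\omega}$ really are \emph{sets} there — this follows, somewhat indirectly, because the reflected sentence asserts the existence of a function $\overline F$ with domain $[H_{\overline\mu}[G_\gamma]]^{<\omega}$, which forces that set (and hence the others, by separation) to exist — and hence that $j\restrict H_{\overline\mu}[G_\gamma]$ genuinely is an elementary map of the two-sorted structures satisfying the definition of very remarkable. The other point needing care is calibrating $\lambda^*$ so that $\la H_{\lambda^*}[G],H_{\lambda^*}\ra$ is correct about non-stationarity and about $(\kappa,\mu)$-remarkability for all $\mu\le\lambda$; granting that, the definability of $\lambda$ as ``the least failing level'' is exactly what makes the single embedding supplied by remarkability do all the work.
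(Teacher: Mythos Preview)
Your proposal is correct and follows essentially the same approach as the paper's proof: both argue by contradiction, take $\lambda$ least so that it is definable in a sufficiently large $H_{\lambda^*}[G]$ (the paper's $H_\delta[G]$), obtain a very remarkable embedding $j$ into that structure with $\lambda$ in its range, reflect to get a witnessing function $\overline F$ on the source side, and then observe that the range of the restricted embedding $j\restrict H_{\overline\mu}[G_\gamma]$ is a $(\kappa,\lambda)$-remarkable set closed under $j(\overline F)$, contradicting elementarity. The only cosmetic difference is that the paper phrases the failure of stationarity via a missing club $\overline C$ and then passes to a function $F_{\overline C}$ using Jech's Theorem~8.28, whereas you work directly with the function characterisation of stationarity from the start; your initially fixed $F$ is never actually used, only $j(\overline F)$ is, which matches the paper exactly.
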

\begin{proof}
Suppose to the contrary that there is a regular cardinal $\lambda>\kappa$ such that the set of all $(\kappa,\lambda)$-remarkable $X$ is not stationary in $[H_\lambda[G]]^\omega$ and assume that $\lambda$ is the least such. A large enough $H_\delta[G]$ will see that this is the case and $\lambda$ will be definable there. So we fix a $(\gamma,\overline\delta,\kappa,\delta)$-very remarkable embedding
\begin{displaymath}
j:\la H_{\overline\delta}[G_\gamma], H_{\overline\delta}\ra\to \la H_\delta[G],H_\delta\ra
\end{displaymath}
with $j(\overline\lambda)=\lambda$. By elementarity, $H_{\overline\delta}[G_\gamma]$ thinks that there is some club $\overline C$ of $[H_{\overline\lambda}[G_\gamma]]^\omega$ without any $(\gamma,\overline\lambda)$-remarkable $X$. Let
\begin{displaymath}
F_{\overline C}:[H_{\overline\lambda}[G_\gamma]]^{\lt\omega}\to H_{\overline\lambda}[G_\gamma]
\end{displaymath}
in $H_{\overline\delta}[G_\gamma]$ be such that all $X$ closed under $F_{\overline C}$ are in $\overline C$ (by Theorem 8.28 in \cite{jech:settheory}). Let $j(F_{\overline C})=F$. By elementarity, $H_\delta[G]$ satisfies that no $(\kappa,\lambda)$-remarkable $X$ is closed under $F$. Now consider the obvious elementary embedding (see Proposition~\ref{prop:remarkableEmbeddingRestriction} below)
\begin{displaymath}
j:\la H_{\overline\lambda}[G_\gamma],G_\gamma,F_{\overline C}\ra\to \la H_\lambda[G], G,F\ra,
\end{displaymath}
and let $X=j\image H_{\overline\lambda}[G_\gamma]$. Clearly $X$ is $(\kappa,\lambda)$-remarkable and $X$ is closed under $F$. Also clearly $X\in H_\delta[G]$. So we have reached a contradiction showing that the collection of $(\kappa,\lambda)$-remarkable $X$ must be stationary in $[H_\lambda[G]]^{\omega}$.
\end{proof}
It is a simple but very handy observation that if $j$ is a $(\gamma,\overline\delta,\kappa,\delta)$-remarkable or very remarkable embedding and $\lambda>\kappa$ is a regular cardinal in the range of $j$ with $j(\overline\lambda)=\lambda$, then $j$ restricts to a $(\gamma,\overline\lambda,\kappa,\lambda)$-remarkable or very remarkable embedding respectively.
\begin{proposition}\label{prop:remarkableEmbeddingRestriction}
In a $\Coll(\omega,\ltkappa)$-forcing extension $V[G]$, if $j$ is $(\gamma,\overline\delta,\kappa,\delta)$-very remarkable and $\lambda>\kappa$ is regular with $j(\overline\lambda)=\lambda$, then $j$ restricts to a $(\gamma,\overline\lambda,\kappa,\lambda)$-very remarkable embedding
\begin{displaymath}
j:\la H_{\overline\lambda}[G_\gamma],H_{\overline\lambda}\ra\to \la H_\lambda[G],H_\lambda\ra.
\end{displaymath}
The same statement holds for remarkable embeddings.
\end{proposition}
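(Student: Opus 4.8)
The plan is to check that the restriction of $j$ to $H_{\overline\lambda}[G_\gamma]$ is the embedding we want; the essential point is that $H_{\overline\lambda}[G_\gamma]$, together with the predicate $H_{\overline\lambda}$ and the element $G_\gamma$, is definable --- from parameters that $j$ already maps where we need them --- inside the two-sorted structure $\la H_{\overline\delta}[G_\gamma],H_{\overline\delta}\ra$ that $j$ connects to $\la H_\delta[G],H_\delta\ra$. First I would pin down the ordinals. Writing $\lambda=j(\overline\lambda)$, we have that $\overline\lambda$ is an ordinal in the domain $H_{\overline\delta}[G_\gamma]$ of $j$ and $\lambda$ is an ordinal in its codomain $H_\delta[G]$, so $\overline\lambda<\overline\delta$ and $\lambda<\delta$ (these being the respective ordinal heights); together with $\overline\delta<\kappa<\lambda$ this yields $\overline\lambda<\overline\delta<\kappa<\lambda<\delta$. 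Since $j$ fixes every ordinal below its critical point $\gamma$ while $j(\overline\lambda)=\lambda>\kappa=j(\gamma)$, we must in fact have $\gamma<\overline\lambda$. And $\overline\lambda$ is $V$-regular: $\lambda$ is $V$-regular with $\lambda<\delta$ and $\delta$ regular, so $\lambda$ is regular in $H_\delta$, hence $\overline\lambda$ is regular in $H_{\overline\delta}$ by elementarity of $j\restrict H_{\overline\delta}$, hence $\overline\lambda$ --- lying below the regular cardinal $\overline\delta$ --- is genuinely $V$-regular. In particular $H_{\overline\lambda}\models\ZFC^-$; and since $\gamma$ is inaccessible (by elementarity, as $j(\gamma)=\kappa$ is), the forcing $\Coll(\omega,\lt\gamma)$ has size $\gamma<\overline\lambda$, so $H_{\overline\lambda}[G_\gamma]$ is again a $\ZFC^-$ model, containing $G_\gamma$ as an element. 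Finally $\overline\lambda<\kappa<\lambda$, exactly as the target definitions require.

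For the remarkable case I would first observe $H_{\overline\lambda}=H_{\overline\lambda}^{H_{\overline\delta}}$ and $H_\lambda=H_\lambda^{H_\delta}$, these being correctly computed because $\overline\delta,\delta$ are regular and $\overline\lambda<\overline\delta$, $\lambda<\delta$. Since $\overline\delta$ need not be a strong limit, $H_{\overline\lambda}$ may be a proper class from the internal vantage of $H_{\overline\delta}$, so I would not invoke an internal satisfaction predicate but argue elementarity of the restriction directly by relativization: for a fixed formula $\varphi$ and $\bar a\in H_{\overline\lambda}$, one has $H_{\overline\lambda}\models\varphi(\bar a)$ iff $H_{\overline\delta}\models\varphi^{H_{\overline\lambda}}(\bar a,\overline\lambda)$ iff $H_\delta\models\varphi^{H_\lambda}(j(\bar a),\lambda)$ iff $H_\lambda\models\varphi(j(\bar a))$, the middle step being elementarity of $j\restrict H_{\overline\delta}$ applied to the formula $\varphi^{H_{\overline\lambda}}$ --- which carries $\overline\lambda$ as an explicit parameter --- together with $j(\overline\lambda)=\lambda$. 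Thus $j\restrict H_{\overline\lambda}\colon H_{\overline\lambda}\to H_\lambda$ is elementary; as $\gamma<\overline\lambda$ it still has critical point $\gamma$, with $j(\gamma)=\kappa\in H_\lambda$, so by the first paragraph it is $(\gamma,\overline\lambda,\kappa,\lambda)$-remarkable. This proves the last sentence of the proposition.

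For the very remarkable case I would run the same argument one sort up. The set $H_{\overline\lambda}[G_\gamma]$ equals $\{x\in H_{\overline\delta}[G_\gamma]:|\mathrm{trcl}(x)|<\overline\lambda\}$, hence is definable in $\la H_{\overline\delta}[G_\gamma],H_{\overline\delta}\ra$ from the parameter $\overline\lambda$ alone; the predicate $H_{\overline\lambda}$ is definable there from $\overline\lambda$ and the predicate $H_{\overline\delta}$; and $G_\gamma$ belongs to $H_{\overline\lambda}[G_\gamma]$ since $|\mathrm{trcl}(G_\gamma)|\leq\gamma<\overline\lambda$. As $j$ sends $\overline\lambda\mapsto\lambda$, the predicate $H_{\overline\delta}$ to the predicate $H_\delta$, and $G_\gamma\mapsto G$, the corresponding definable objects in $\la H_\delta[G],H_\delta\ra$ are $H_\lambda[G]$, the predicate $H_\lambda$, and $G$ --- here using $G_\kappa=G$, since $\prod_{\xi<\kappa}\Coll(\omega,\xi)=\Coll(\omega,\ltkappa)$ --- with $G\in H_\lambda[G]$ because $|\mathrm{trcl}(G)|\leq\kappa<\lambda$. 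So $j$ carries $H_{\overline\lambda}[G_\gamma]$ into $H_\lambda[G]$, and the relativization computation of the previous paragraph, now run inside $H_{\overline\delta}[G_\gamma]$ and $H_\delta[G]$, shows the restriction $j\colon\la H_{\overline\lambda}[G_\gamma],H_{\overline\lambda}\ra\to\la H_\lambda[G],H_\lambda\ra$ is elementary; it inherits $j(G_\gamma)=G$ from the original embedding, and its restriction to $H_{\overline\lambda}$ is the remarkable embedding just produced, so it is $(\gamma,\overline\lambda,\kappa,\lambda)$-very remarkable. I expect the only delicate point to be this bookkeeping --- correctly recognizing $H_{\overline\lambda}[G_\gamma]$, the predicate $H_{\overline\lambda}$, and $G_\gamma$ as definable from parameters that $j$ handles, and (since $H_{\overline\lambda}$ may be an internal proper class) carrying out the elementarity of the restriction via relativization rather than through a truth predicate; the remaining order relations among the ordinals are routine.
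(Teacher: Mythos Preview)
Your proof is correct and follows essentially the same approach as the paper's: both arguments hinge on the definability of $H_{\overline\lambda}$ (and, in the very remarkable case, of $H_{\overline\lambda}[G_\gamma]$) from the parameter $\overline\lambda$ inside the larger two-sorted structure, together with the regularity of $\overline\lambda$ established by elementarity and correctness of $H_{\overline\delta}$. Your version is more explicit on a point the paper leaves implicit---namely, that $H_{\overline\lambda}$ may be a proper class from the internal viewpoint of $H_{\overline\delta}$, so that elementarity of the restriction must be argued formula-by-formula via relativization rather than through an internal satisfaction predicate---but this is exactly what the paper's phrase ``this is first-order expressible'' is gesturing at.
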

\begin{proof}
First, observe that $\overline\lambda$ is $V$-regular. Since $H_\delta$ satisfies that $\lambda$ is regular, $H_{\overline\delta}$ satisfies that $\overline\lambda$ is regular, but it must be correct about this because it has all sets of hereditary size $\lt\overline\delta$.
Next, observe that $H_{\overline\lambda}$ is definable in $\la H_{\overline\delta}[G_\gamma],H_{\overline\delta}\ra$ as the collection of all sets of hereditary size $\lt\overline\lambda$ in $H_{\overline\delta}$, and the same formula with $\lambda$ in place of $\overline\lambda$ defines $H_\lambda$ in $\la H_{\delta}[G],H_{\delta}\ra$. It follows that if $\la H_{\overline\lambda}[G_\gamma],H_{\overline\lambda}\ra\models\varphi(a)$, then this is first-order expressible in the structure $\la H_{\overline\delta}[G_\gamma], H_{\overline\delta}\ra$, and therefore by elementarity, $\la H_{\delta}[G],H_\delta\ra$ satisfies that $\la H_\lambda[G], H_\lambda\ra\models\varphi(j(a))$. This argument also proves the statement for remarkable embeddings.
\end{proof}

Every $(\gamma,\overline\lambda,\kappa,\lambda)$-remarkable embedding has an extender-like factor embedding that will be used in the indestructibility arguments.
\begin{proposition}\label{prop:remarkableExtender}
Suppose that $\kappa$ is remarkable and $\lambda>\kappa$ is regular. In a $\Coll(\omega,\ltkappa)$-forcing extension $V[G]$, if $j:H_{\overline\lambda}\to H_\lambda$ is a $(\gamma,\overline\lambda,\kappa,\lambda)$-remarkable embedding, then there is an embedding $j^*:H_{\overline\lambda}\to N$ and an embedding $h:N\to H_\lambda$ such that
\begin{enumerate}
\item $N=\{j^*(f)(a)\mid a\in S^{\lt\omega},\,f\in H_{\overline\lambda}\}$ is transitive, where $S=V_\kappa\union\{\kappa\}$,
\item $\text{cp}(h)>\kappa$,
\item the following diagram commutes:
\begin{diagram}
H_{\overline\lambda} &\rTo^j&H_\lambda\\
 \dTo^{j^*}&\ruTo^h &\\
N & &
\end{diagram}
\end{enumerate}
\end{proposition}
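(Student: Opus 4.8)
The plan is the standard ``extender factor'' construction: form $X$, the closure of $\mathrm{ran}(j)$ inside $H_\lambda$ under functions from $H_{\overline\lambda}$ applied to seeds from $S=V_\kappa\union\{\kappa\}$, show $X\prec H_\lambda$, and then let $j^*$ and $h$ come from the transitive collapse of $X$. To set up, note that $\kappa$ is inaccessible (remarkable cardinals are totally indescribable), so $V_\kappa\in H_\lambda$ and $H_\lambda\models$``$\kappa$ is inaccessible''; pulling this back along $j$ and using that $H_{\overline\lambda}$ contains every subset of $\gamma$, the critical point $\gamma$ is genuinely inaccessible, $V_\gamma\in H_{\overline\lambda}$, $j\restrict V_\gamma=\mathrm{id}$, and $j(V_\gamma\union\{\gamma\})=V_\kappa\union\{\kappa\}=S$. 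Since restricting or padding a function's domain by projections never changes a value, we may assume all functions below have domain $(V_\gamma\union\{\gamma\})^{\lt\omega}$ and set
\[
X=\{\,j(f)(a)\mid a\in S^{\lt\omega},\ f\in H_{\overline\lambda},\ \mathrm{dom}(f)=(V_\gamma\union\{\gamma\})^{\lt\omega}\,\}\of H_\lambda.
\]

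First I would check $X\prec H_\lambda$ via the Tarski--Vaught criterion. Given $\vec p=\la j(f_1)(a_1),\dots,j(f_n)(a_n)\ra$ from $X$ with $H_\lambda\models\exists y\,\varphi(y,\vec p)$, concatenate the $a_i$ into a single seed $a\in S^{\lt\omega}$ and compose each $f_i$ with the appropriate coordinate projection to obtain $g_1,\dots,g_n\in H_{\overline\lambda}$ with $j(g_i)(a)=j(f_i)(a_i)$. Working inside $H_{\overline\lambda}$, which is a model of $\ZFC^-$ and hence satisfies collection and has all of its sets well-orderable, collection over $(V_\gamma\union\{\gamma\})^{\lt\omega}$ yields a set $R\in H_{\overline\lambda}$ containing, for each $b$ in that domain, a witness to $\varphi(\,\cdot\,,g_1(b),\dots,g_n(b))$ whenever one exists; fix a well-ordering of $R$ and define $f\in H_{\overline\lambda}$ by letting $f(b)$ be the least such witness in $R$, or $\emptyset$ if there is none. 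By elementarity $j(f)(a)$ is a genuine witness to $\varphi(\,\cdot\,,\vec p)$ in $H_\lambda$, and it lies in $X$; hence $X\prec H_\lambda$.

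Next I would record that $X$ is closed around the seeds: coordinate-projection functions give $S\of X$, the identity function on $(V_\gamma\union\{\gamma\})^{\lt\omega}$ gives $S^{\lt\omega}\of X$, and, together with pairing functions, $\mathrm{TC}(\{a\})\of X$ for every $a\in S^{\lt\omega}$. Consequently the transitive collapse $k\colon X\to N$ is the identity on $S^{\lt\omega}$, fixes $S^{\lt\omega}$, $V_\kappa$ and $\kappa$ as sets, and fixes every ordinal $\le\kappa$. Now put $j^*=k\circ j\colon H_{\overline\lambda}\to N$, which is elementary since $j\colon H_{\overline\lambda}\to X$ is elementary (as $X\prec H_\lambda$) and $k$ is an isomorphism, and $h=k^{-1}\colon N\to H_\lambda$, which is elementary with range $X$. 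Then $h\circ j^*=j$, which is (3); $\text{cp}(h)>\kappa$ because $h$ fixes every ordinal $\le\kappa$, which is (2); and, since $k$ is the identity on $S^{\lt\omega}$, for $a\in S^{\lt\omega}$ we have $j^*(f)(a)=k(j(f))(a)=k(j(f)(a))$, so $N=\{\,k(x)\mid x\in X\,\}=\{\,j^*(f)(a)\mid a\in S^{\lt\omega},\ f\in H_{\overline\lambda}\,\}$ is transitive, which is (1).

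I expect the Tarski--Vaught step to be the main obstacle, and within it two points: checking that the witness-selecting function $f$ really is an element of $H_{\overline\lambda}$ (this is exactly where $\ZFC^-$ --- collection and choice inside $H_{\overline\lambda}$ --- gets used), and justifying the consolidation of the several seeds $a_i$ into one seed $a$ by projections. The inaccessibility of $\gamma$, the closure of $X$ around $S^{\lt\omega}$, and the collapse bookkeeping are all routine.
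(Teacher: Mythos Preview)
Your proposal is correct and follows essentially the same approach as the paper: define $X=\{j(f)(a)\mid a\in S^{\lt\omega},\,f\in H_{\overline\lambda}\}$, observe $X\prec H_\lambda$ and $S\subseteq X$, collapse $X$ to obtain $N$, and set $j^*=\pi\circ j$, $h=\pi^{-1}$. The paper's proof simply asserts that $X\prec H_\lambda$ and $S\subseteq X$ are ``easy to see,'' whereas you have carefully supplied the Tarski--Vaught verification (correctly invoking collection and choice in the $\ZFC^-$-model $H_{\overline\lambda}$) and the bookkeeping showing the collapse fixes $S^{\lt\omega}$; these details are accurate and exactly what the paper elides.
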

\begin{proof}
Let $X=\{j(f)(a)\mid a\in S^{\lt\omega},f\in H_{\overline\lambda}\}$, where $S=V_\kappa\union \{\kappa\}$, and let $\pi:
X\to N$ be the Mostowski collapse of $X$. It is easy to see that $X\prec H_\lambda$ and  $j\image H_{\overline\lambda}\subseteq X$. Thus, we define $j^*(x)=\pi\circ j(x)$ and $h=\pi^{-1}$. Since $S\subseteq X$, it follows that the critical point of $h$ must be above $\kappa$.
\end{proof}
\noindent If $j$ and $j^*$ are as in Proposition~\ref{prop:remarkableExtender}, we shall say that $j^*$ is the \emph{remarkable extender} embedding obtained from $j$.

Analysis of properties of remarkable cardinals often relies on the following folklore result about the absoluteness of existence of elementary embeddings of countable models.
\begin{lemma}[Absoluteness Lemma for countable embeddings]\label{le:absolutenessLemma}
Suppose that $M\models\ZFC^-$ is countable and $j:M\to N$ is an elementary embedding. If $W\subseteq V$ is a transitive (set or class) model of $\ZFC^-$ such that $M$ is countable in $W$ and $N\in W$, then $W$ has some elementary embedding $j^*:M\to N$. Moreover, if both $M$ and $N$ are transitive, $\text{cp}(j)=\gamma$ and $j(\gamma)=\delta$, we can additionally assume that $\text{cp}(j^*)=\gamma$ and $j^*(\gamma)=\delta$. Also, we can assume that $j$ and $j^*$ agree on some fixed finite number of values.
\end{lemma}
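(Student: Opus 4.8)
The plan is to code the existence of an elementary embedding $M\to N$ by an infinite branch through a tree that lives in $W$, and then to invoke the absoluteness of well-foundedness. Since $M$ is countable in $W$, work inside $W$: fix an enumeration $\langle a_i:i<\omega\rangle$ of $M$ and let $T$ be the tree whose nodes of height $n$ are the partial elementary maps $p:\{a_0,\dots,a_{n-1}\}\to N$, that is, the maps with $M\models\varphi(a_{i_0},\dots,a_{i_{m-1}})$ if and only if $N\models\varphi(p(a_{i_0}),\dots,p(a_{i_{m-1}}))$ for every $\in$-formula $\varphi$ and all $i_0,\dots,i_{m-1}<n$, ordered by end-extension. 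Because $M$ and $N$ belong to $W$ and the satisfaction relation of a set-sized structure is definable and absolute between transitive models of $\ZFC^-$, the tree $T$ is an element of $W$. Since the enumeration exhausts $M$, an infinite branch through $T$ is precisely a total elementary embedding $M\to N$.

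The given embedding $j$ witnesses in $V$ that $T$ has an infinite branch, namely $\langle j\restriction\{a_0,\dots,a_{n-1}\}:n<\omega\rangle$; each term here is a finite partial elementary map, and $W$ recognizes it as a node of $T$ by the absoluteness of satisfaction. Now $T$ has no infinite branch if and only if the relation on $T$ that places $p$ below $q$ exactly when $p$ properly end-extends $q$ is well-founded, and well-foundedness of a set relation is absolute between transitive models of $\ZFC^-$, being witnessed by a ranking function into the ordinals. Hence $T$ has an infinite branch in $W$, and since $\ZFC^-$ proves that a non-well-founded relation carries an infinite descending sequence, $W$ actually contains such a branch, which is the desired $j^*:M\to N$. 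We never assert that $j$ itself lies in $W$ --- only that its finite restrictions exhibit the ill-foundedness that then transfers.

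The refinements come from shrinking $T$, the point being that the given $j$ still runs along a branch of the smaller tree, so nothing else in the argument changes. To force $j^*$ to agree with $j$ on a prescribed finite set $b_0,\dots,b_{k-1}$ from $M$, add to the definition of a node the requirement $p(b_\ell)=j(b_\ell)$ whenever $b_\ell\in\text{dom}(p)$, noting $j(b_\ell)\in N\subseteq W$. When $M$ and $N$ are transitive with $\text{cp}(j)=\gamma$ and $j(\gamma)=\delta$, additionally require $p(\xi)=\xi$ for every ordinal $\xi<\gamma$ in $\text{dom}(p)$, using $\gamma\subseteq M$, and $p(\gamma)=\delta$ if $\gamma\in\text{dom}(p)$; as $\delta=j(\gamma)>\gamma$, any branch $j^*$ through this tree fixes every ordinal below $\gamma$ and moves $\gamma$, so $\text{cp}(j^*)=\gamma$ and $j^*(\gamma)=\delta$.

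I expect the proof to be routine once it is set up; the only points deserving care are the verification that $T$ really is an element of $W$ --- which is exactly what the hypotheses ``$M$ is countable in $W$'' and ``$N\in W$'' provide, since they make the enumeration of $M$ and the satisfaction relations of $M$ and $N$ available in $W$ --- and the appeal to the absoluteness of well-foundedness. Neither is a genuine obstacle.
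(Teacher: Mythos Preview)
Your proof is correct and follows essentially the same approach as the paper: both build in $W$ the tree of finite partial elementary maps from an enumeration of $M$ into $N$, observe that $j$ gives an infinite branch in $V$, and transfer ill-foundedness to $W$ by absoluteness; the refinements are handled identically by restricting the tree. The only cosmetic difference is that the paper codes nodes as finite sequences of targets rather than as partial functions.
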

\begin{proof}
Fix some enumeration $M=\la a_i\mid i\in\omega\ra$ that exists in $W$. Say that a sequence $\langle b_0,\ldots,b_{n-1}\rangle$ of elements of $N$ is a \emph{finite partial embedding} if for every formula $\varphi(x_0,\ldots,x_{n-1})$, we have that
\begin{displaymath}
M\models\varphi(a_0,\ldots,a_{n-1})\Leftrightarrow N\models\varphi(b_0,\ldots,b_{n-1}).
\end{displaymath}
Let $T$ consist of all finite partial embeddings. Observe that $T$ is clearly a tree under the natural ordering by extension and it has height at most $\omega$. Also, $T$ is an element of $W$. Clearly any infinite branch through $T$ gives an embedding $h:M\to N$. Let $c_i=j(a_i)$. Every sequence $\langle c_0,\ldots,c_{n-1}\rangle$ is a partial finite embedding in $W$ and the collection $\langle c_i\mid i\in\omega\rangle$ is an infinite branch through $T$ in $V$. Thus, the tree $T$ is ill-founded in $V$. But then $T$ must be ill-founded in $W$ by the absoluteness of well-foundedness. Thus, $W$ has an infinite branch through $T$ and this branch gives some embedding $j^*:M\to N$. To achieve that $j$ and $j^*$ agree on the critical point, we limit the tree $T$ to finite partial embeddings fixing all $\alpha$ below the critical point of $j$ and having $b_n=\delta$, where $a_n=\gamma$. An agreement on some finitely many values is achieved similarly.
\end{proof}
We will often use the fact that $\Coll(\omega,\ltkappa)$ is a weakly homogeneous poset\footnote{A poset $\p$ is said to be \emph{weakly homogeneous} if for any two conditions $p,q\in\p$, there is an automorphism $\pi$ of $\p$ such that $p$ and $\pi(q)$ are compatible.}. So, in particular, if $p\in \Coll(\omega,\ltkappa)$ forces some statement $\varphi$ that involves only check names, then $\one_{\Coll(\omega,\ltkappa)}\forces \varphi$.

We end the preliminaries with an alternative characterization of remarkable cardinals that was originally Schindler's primary definition.
\begin{theorem}[Schindler \cite{schindler:remarkable2}]\label{th:equivalentRemarkable}
A cardinal $\kappa$ is remarkable if and only if for every regular cardinal $\lambda>\kappa$, there are countable transitive models $M$ and $N$ with embeddings
\begin{enumerate}
\item $\pi:M\to H_\lambda$ with $\pi(\overline\kappa)=\kappa$,
\item $\sigma:M\to N$ such that
\begin{enumerate}
\item  $\text{cp}(\sigma)=\overline\kappa$,
\item $\ORD^M$ is a regular cardinal in $N$ and $M=H_{\ORD^M}^N$,
\item $\sigma(\overline\kappa)>\ORD^M$.
\end{enumerate}
\end{enumerate}
\end{theorem}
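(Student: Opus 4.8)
The plan is to pass between the two formulations by reflecting the generic‑supercompactness definition into --- and recovering it from --- the transitive collapse of a countable elementary substructure, using the Absoluteness Lemma (Lemma~\ref{le:absolutenessLemma}) to move elementary embeddings of countable $\ZFC^-$‑models between transitive models, together with the weak homogeneity of the collapse.

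For the forward direction, suppose $\kappa$ is remarkable and fix a regular $\lambda>\kappa$. Pick a regular $\theta$ large enough that $H_\theta\models$``$\kappa$ is remarkable at level $\lambda$'', and let $X\prec H_\theta$ be countable with $\kappa,\lambda\in X$. Write $u:\bar X\to X\of H_\theta$ for the inverse of the transitive collapse and set $\bar\kappa=u^{-1}(\kappa)$, $\bar\lambda=u^{-1}(\lambda)$; then $\bar X\models\ZFC^-$, $\bar\lambda$ is regular in $\bar X$, and by elementarity of $u$ the model $\bar X$ believes $\bar\kappa$ is remarkable at level $\bar\lambda$. Since $\bar X$ is countable, fix in $V$ an $\bar X$‑generic $\bar G\of\Coll(\omega,\ltkappa)^{\bar X}$; in $\bar X[\bar G]$ there are then an $\bar X$‑regular $\bar\nu<\bar\kappa$ and an elementary $\bar\jmath:H^{\bar X}_{\bar\nu}\to H^{\bar X}_{\bar\lambda}$ with $\text{cp}(\bar\jmath)=\bar\gamma$ and $\bar\jmath(\bar\gamma)=\bar\kappa$. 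Now let $M=H^{\bar X}_{\bar\nu}$, $N=H^{\bar X}_{\bar\lambda}$, $\overline\kappa=\bar\gamma$, $\sigma=\bar\jmath$, and $\pi=(u\restrict N)\circ\bar\jmath$. These $M,N$ are countable transitive models of $\ZFC^-$, the map $\pi$ is the composite of the elementary embeddings $\bar\jmath:M\to N$ and $u\restrict N:N\to H_\lambda$, so $\pi:M\to H_\lambda$ is elementary with $\pi(\overline\kappa)=u(\bar\kappa)=\kappa$, and the rest is immediate: $\text{cp}(\sigma)=\overline\kappa$, $\ORD^M=\bar\nu$ is regular in $N$ with $M=H^N_{\bar\nu}$, and $\sigma(\overline\kappa)=\bar\kappa>\bar\nu=\ORD^M$.

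For the converse, assume the right‑hand side and fix a regular $\lambda>\kappa$; I must produce, in a $\Coll(\omega,\ltkappa)$‑forcing extension of $V$, a $V$‑regular $\overline\lambda<\kappa$ and an elementary $j:H_{\overline\lambda}\to H_\lambda$ with critical point $\gamma$ and $j(\gamma)=\kappa$. Fix a regular $\delta>\lambda$ large enough that $H_\delta$ correctly decides remarkability of $\kappa$ at every level below $\delta$, and apply the hypothesis at $\delta$: there are countable transitive $M,N$ with $\pi:M\to H_\delta$, $\pi(\overline\kappa)=\kappa$, and $\sigma:M\to N$ with $\text{cp}(\sigma)=\overline\kappa$, $\overline\mu:=\ORD^M$ regular in $N$, $M=H^N_{\overline\mu}$, and $\sigma(\overline\kappa)>\overline\mu$. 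The heart of the proof is the claim that $M$ believes $\overline\kappa$ is remarkable. Granting this, elementarity of $\pi$ gives $H_\delta\models$``$\kappa$ is remarkable'', which, since $\delta$ was chosen large, reflects down to the genuine statement that $\kappa$ is remarkable at level $\lambda$; this suffices, as $\lambda$ was arbitrary.

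Proving the claim is the step I expect to be the real work. Fix an $M$‑regular cardinal $\mu$ with $\overline\kappa<\mu<\overline\mu$ (for which $H_\mu$ is a set in $M$); by elementarity of $\sigma$ it is enough that $N$ believes $\sigma(\overline\kappa)$ is remarkable at level $\sigma(\mu)$. Since $M=H^N_{\overline\mu}$ and $\mu<\overline\mu$, we have $H^M_\mu=H^N_\mu$, the cardinal $\mu$ is $N$‑regular, $\sigma(\mu)$ is $N$‑regular, and $\mu<\overline\mu<\sigma(\overline\kappa)<\sigma(\mu)$; restricting $\sigma$ to the substructure $H^N_\mu$ yields an elementary embedding $H^N_\mu\to H^N_{\sigma(\mu)}$ with critical point $\overline\kappa$ and $\overline\kappa\mapsto\sigma(\overline\kappa)$. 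This embedding exists in $V$ and has exactly the shape required to witness, inside $N$, that $\sigma(\overline\kappa)$ is remarkable at level $\sigma(\mu)$ --- the only thing missing is that it need not lie in a $\Coll(\omega,{<}\sigma(\overline\kappa))^N$‑extension of $N$. But $N$ is countable, so realize such a generic $h$ over $N$ in $V$; as $\mu<\sigma(\overline\kappa)$, the structure $H^N_\mu$ is countable in $N[h]$ while $H^N_{\sigma(\mu)}\in N[h]$, so the Absoluteness Lemma (with ambient universe $V$) transports the embedding into $N[h]$, preserving the critical point and its image. Thus some condition forces over $N$ that such a witness exists; since $\Coll(\omega,{<}\sigma(\overline\kappa))^N$ is weakly homogeneous in $N$ and the parameters of the statement are checks, $\one$ forces it, i.e.\ $N$ believes $\sigma(\overline\kappa)$ is remarkable at level $\sigma(\mu)$. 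Letting $\mu$ range over all such $M$‑regular cardinals above $\overline\kappa$ establishes the claim, and with it the theorem.
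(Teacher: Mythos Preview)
Your argument follows essentially the same route as the paper's. The forward direction is identical. For the converse, the paper also reflects through $\pi$ and $\sigma$ using the Absoluteness Lemma and weak homogeneity, but it works with a single $\overline\lambda=\pi^{-1}(\lambda)$ (having taken $\delta>2^\lambda$ so that $H_\lambda\in H_\delta$ and hence $H^M_{\overline\lambda}\in M$), lifts $\sigma$ to $M[g]\to N[g']$, and reflects the existence of the embedding from $N[g']$ to $M[g]$ via the lifted map, rather than quantifying over all $M$-regular $\mu$ and reflecting the forcing statement through the unlifted $\sigma$ as you do.

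One point to tighten: your parenthetical ``for which $H_\mu$ is a set in $M$'' is not automatic for an arbitrary $M$-regular $\mu<\overline\mu$. By elementarity of $\pi$ it holds precisely when $H_{\pi(\mu)}\in H_\delta$, i.e., when $2^{<\pi(\mu)}<\delta$, and your choice of $\delta$ (``large enough that $H_\delta$ correctly decides remarkability at every level below $\delta$'') does not obviously guarantee this for all such $\mu$; without it, the hypothesis of the Absoluteness Lemma that $H^N_\mu$ be countable in $N[h]$ (and that $H^N_{\sigma(\mu)}\in N$) is not secured. Since you ultimately only need remarkability at the fixed level $\lambda$, the simplest repair is exactly the paper's: take $\delta>2^\lambda$ with $\lambda$ definable in $H_\delta$, so that $\lambda\in\text{ran}(\pi)$, and run your argument for the single $\mu=\pi^{-1}(\lambda)$.
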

\begin{proof}$\,$\\
($\Rightarrow$): Suppose that $\kappa$ is remarkable. Fix a regular cardinal $\lambda>\kappa$ and let $\delta=(2^\lambda)^+$. Let $X$ be any countable elementary substructure of $H_\delta$ with $\kappa,\lambda\in X$ and let $\overline M$ be the Mostowski collapse of $X$.  The inverse of the collapse is an elementary embedding
\begin{displaymath}
\overline\pi:\overline M\to H_\delta
\end{displaymath}
with $\overline\pi(\kappa')=\kappa$ and $\overline\pi(\lambda')=\lambda$. Since $\kappa$ is remarkable, $H_\delta$ satisfies that every $\Coll(\omega,\ltkappa)$ forcing extension has some embedding $j:H_\beta\to H_\lambda$, where $\beta$ is $V$-regular and the critical point is sent to $\kappa$. Thus, $\overline M$ satisfies the same statement for $\kappa'$ and $\lambda'$ by elementarity. Since $\overline M$ is countable, we can choose an $\overline M$-generic $g$ for $\Coll(\omega,\lt\kappa')^{\overline M}$ and $\overline M[g]$ must have an $\overline M$-regular $\beta<\kappa'$ and
\begin{displaymath}
\sigma: H_{\beta}^{\overline M}\to H_{\lambda'}^{\overline M}
\end{displaymath}
with $\text{cp}(\sigma)=\overline\kappa$ and $\sigma(\overline\kappa)=\kappa'$. Let $M=H_{\beta}^{\overline M}$ and let $\pi=\overline\pi\circ\sigma$. Now it is easy to see that $\pi$ and $\sigma$ are the required embeddings.\\
\noindent ($\Leftarrow$): Fix a regular cardinal $\lambda>\kappa$ and let $\delta>2^\lambda$ be regular such that $\lambda$ is definable in $H_\delta$ (for instance $\delta=\gamma^+$ where $\gamma>2^\lambda$ has cofinality $\lambda$). Let
\begin{displaymath}
\pi:M\to H_\delta\text{ and }\sigma:M\to N
\end{displaymath}
be as in the statement of the theorem with $\pi(\overline\lambda)=\lambda$. Since $M$ is countable, we can choose an $M$-generic filter $g$ for $\Coll(\omega,\lt\overline\kappa)^M$. Because $M=H_{\ORD^M}^N$ and $N$ is countable, we can choose some $g'$ extending $g$ that is $N$-generic for $\Coll(\omega,\lt\sigma(\overline\kappa))^N$. So, by the lifting criterion, we can lift $\sigma$ to
\begin{displaymath}
\sigma:M[g]\to N[g'].
\end{displaymath}
Consider now the restriction
\begin{displaymath}
\sigma:H^M_{\overline\lambda}\to H^N_{\sigma(\overline\lambda)}.
\end{displaymath}
Notice that $H_{\overline\lambda}^M$ is countable in $N[g']$. By the absoluteness lemma~(\ref{le:absolutenessLemma}), $N[g']$ must have some embedding
\begin{displaymath}
\tau:H^M_{\overline\lambda}\to H_{\sigma(\overline\lambda)}^N\text{ with }\text{cp}(\tau)=\overline\kappa\text{ and } \tau(\overline\kappa)=\sigma(\overline\kappa).
\end{displaymath}
 Thus, $N[g']$ satisfies that there exists an $N$-regular $\beta<\sigma(\overline\kappa)$, $\gamma<\beta$ and
\begin{displaymath}
\tau:H_{\beta}\to H_{\sigma(\overline\lambda)}\text{ with }\text{cp}(\tau)=\gamma\text{ and }\tau(\gamma)=\sigma(\overline\kappa).
\end{displaymath}
So by elementarity, $M[g]$ satisfies that there exists an $M$-regular $\beta<\overline\kappa$, $\gamma<\beta$ and
\begin{displaymath}
\tau:H_{\beta}\to H_{\overline\lambda}\text{ with }\text{cp}(\tau)=\gamma\text{ and }\tau(\gamma)=\overline\kappa.
\end{displaymath}
Since $\Coll(\omega,\ltkappa)$ is weakly homogeneous, $M$ satisfies that this statement is forced by $\Coll(\omega,\lt\overline\kappa)$. But then by elementarity, $H_{\delta}$ satisfies that it is forced by $\Coll(\omega,\ltkappa)$ that there exists a regular $\beta<\kappa$, $\gamma<\beta$ and
\begin{displaymath}
\tau:H_{\beta}\to H_{\lambda}\text{ with }\text{cp}(\tau)=\gamma\text{ and  }\tau(\gamma)=\kappa.
\end{displaymath}
\end{proof}
\noindent In the future, we will call a pair of embeddings $\pi$ and $\sigma$ satisfying the conditions of Theorem~\ref{th:equivalentRemarkable} a $(\kappa,\lambda)$-\emph{remarkable pair}.
\section{A remarkable Laver function}\label{sec:LaverFunction}
Laver defined and used a Laver function on a supercompact cardinal $\kappa$ to show that it can be made indestructible by all $\lt\kappa$-directed closed forcing. Very generally, given a large cardinal $\kappa$ characterized by the existence of some kind of elementary embeddings, a Laver-like function $\ell$ on $\kappa$ has the property that for any $a$ in the universe, $\ell$ \emph{anticipates} $a$ in the sense that there is an embedding $j$, of the type characterizing the large cardinal, such that $j(\ell)(\kappa)=a$. Although the existence of Laver-like functions can be forced for many large cardinals, only a few large cardinals such as supercompact, strong, and extendible cardinals have them outright \cite{GitikShelah:IndestructibleStrongCardinals, corazza:laver}. We shall define a Laver-like function for a remarkable cardinal and prove that every remarkable cardinal carries such a function.
\begin{definition}
Suppose that $\kappa$ is remarkable, $\ell\from\xi\to V_\kappa$ for some $\xi\leq\kappa$, and $G\subseteq\Coll(\omega,\ltkappa)$ is $V$-generic.\footnote{The symbol $\from$ is used to indicate a possibly partial function.} We shall say that $x$ is $\lambda$-\emph{anticipated} by $\ell$ (for $V$-regular $\lambda$) in $V[G]$, if $x\in H_\lambda$ and $V[G]$ has a $(\mu,\overline\lambda,\xi,\lambda)$-remarkable embedding $h:H_{\overline\lambda}\to H_{\lambda}$ such that
\begin{enumerate}
\item $\ell\restrict\mu+1\in H_{\overline\lambda}$,
\item $\mu\in\text{dom}(\ell)$,
\item $h(\ell\restrict\mu+1)(\xi)=x$.
\end{enumerate}
\end{definition}
\begin{definition}
Suppose that $\kappa$ is remarkable. We define that a function $\ell\from\kappa\to V_\kappa$ has the \emph{remarkable Laver} property if whenever $\lambda>\kappa$ is regular and $G\subseteq\Coll(\omega,\ltkappa)$ is $V$-generic, every $x\in H_\lambda$ is $\lambda$-anticipated by $\ell$ in $V[G]$.
We define that $l$ is a \emph{remarkable Laver} function if it has the remarkable Laver property and for every $\xi\in\text{dom}(\ell)$, we have that $\xi$ is inaccessible and $\ell\image\xi\subseteq V_\xi$.
\end{definition}
\noindent We construct a remarkable Laver function by adapting Laver's construction to the context of remarkable cardinals. Let's start with some preliminaries.

\begin{lemma}\label{le:UnanticipatedSetInVkappa}
Suppose that $\kappa$ is remarkable and $\ell\from\xi\to V_\kappa$ for some $\xi<\kappa$. In a $\Coll(\omega,\ltkappa)$-forcing extension $V[G]$, if there is a regular $\lambda$ for which some set is not $\lambda$-anticipated by $\ell$, then the least such $\lambda$ is below $\kappa$.
\end{lemma}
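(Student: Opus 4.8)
The plan is to argue by contradiction, reflecting the failure of anticipation from above $\kappa$ down below $\kappa$. Suppose the least $V$-regular $\lambda$ for which some element of $H_\lambda$ is not $\lambda$-anticipated by $\ell$ in $V[G]$, call it $\lambda_0$, were $\geq\kappa$, and fix $x\in H_{\lambda_0}$ that is not $\lambda_0$-anticipated by $\ell$. Since $\kappa$ is remarkable, hence inaccessible, $\ell\in V_\kappa$, so I fix $\gamma_0<\kappa$ with $\xi<\gamma_0$ and $\ell\in V_{\gamma_0}$, and put $\delta=(2^{\lambda_0})^+$, a $V$-regular cardinal above $\lambda_0\geq\kappa$ large enough that $H_\delta$ is correct about ``$\kappa$ is inaccessible'' and about ``$\lambda_0$ is regular'', and contains $H_{\lambda_0}$. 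By Proposition~\ref{prop:StatManyEmbeddings} the $(\kappa,\delta)$-remarkable subsets of $H_\delta[G]$ are stationary in $[H_\delta[G]]^\omega$, so I may choose such an $X$ with $\lambda_0,x,\gamma_0\in X$, and write it as the range of a $(\gamma,\overline\delta,\kappa,\delta)$-very remarkable embedding $j:\la H_{\overline\delta}[G_\gamma],H_{\overline\delta}\ra\to\la H_\delta[G],H_\delta\ra$. Then $\text{cp}(j)=\gamma$, $j(\gamma)=\kappa$, $j(G_\gamma)=G$ and $\overline\delta<\kappa$; since $\gamma_0\in X$ lies below $\kappa=j(\gamma)$ we get $\gamma_0<\gamma$, so $j$ fixes both $\xi$ and $\ell$. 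Setting $\overline{\lambda_0}=j^{-1}(\lambda_0)$ and $\overline x=j^{-1}(x)$, elementarity and correctness of $H_{\overline\delta}$ give that $\overline{\lambda_0}$ is $V$-regular, $\overline x\in H_{\overline{\lambda_0}}$, $\gamma$ is inaccessible in $V$, and $\xi<\gamma\leq\overline{\lambda_0}<\overline\delta<\kappa$.

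Next I would push the failure of anticipation across $j$. Much as in the proof of Proposition~\ref{prop:remarkableEmbeddingRestriction}, the statement ``$y\in H_{\lambda_0}$, and no member of $H_\delta[G]$ is a $(\mu,\overline\lambda,\xi,\lambda_0)$-remarkable embedding $h:H_{\overline\lambda}\to H_{\lambda_0}$ with $\ell\restrict\mu+1\in H_{\overline\lambda}$, $\mu\in\text{dom}(\ell)$ and $h(\ell\restrict\mu+1)(\xi)=y$'' is first-order expressible in $\la H_\delta[G],H_\delta\ra$ with parameters $y,\lambda_0,\ell,\xi$, because $H_{\overline\lambda}$ and $H_{\lambda_0}$ are definable from the predicate and every such $h$ has hereditary size below $\delta$. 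By our choice of $x$ this statement is true of $x$ (there is no such $h$ even in $V[G]$), so by elementarity of $j$ the corresponding statement holds of $\overline x$ in $\la H_{\overline\delta}[G_\gamma],H_{\overline\delta}\ra$: no member of $H_{\overline\delta}[G_\gamma]$ is a $(\mu,\overline\lambda,\xi,\overline{\lambda_0})$-remarkable embedding $h:H_{\overline\lambda}\to H_{\overline{\lambda_0}}$ witnessing that $\overline x$ is $\overline{\lambda_0}$-anticipated by $\ell$.

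The crucial step, which I expect to be the main obstacle, is to upgrade this from $H_{\overline\delta}[G_\gamma]$ to all of $V[G]$. Suppose $h\in V[G]$ were such a witness, an elementary embedding $h:H_{\overline\lambda}\to H_{\overline{\lambda_0}}$ with $\text{cp}(h)=\mu<\xi$, $h(\mu)=\xi$, $\overline\lambda<\xi$ being $V$-regular, and the three anticipation clauses holding for $\overline x$. Put $M=H_{\overline\lambda}$, $N=H_{\overline{\lambda_0}}$ and $W=H_{\overline\delta}[G_\gamma]$; both $M$ and $N$ are transitive models of $\ZFC^-$. Since $\overline\lambda<\xi<\gamma_0<\gamma$ and $\gamma$ is inaccessible, $M$ has $V$-size below $\gamma$ and is thus countable in the $\Coll(\omega,\lt\gamma)$-extension $W$; and since $\overline{\lambda_0}<\overline\delta$ is $V$-regular, $N=H_{\overline{\lambda_0}}^{H_{\overline\delta}}\in H_{\overline\delta}\subseteq W$. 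It is here that it matters to have reflected at a level $\delta$ strictly above $\lambda_0$, rather than at $\lambda_0$ itself, where the target $H_{\lambda_0}$ would not be an element of the corresponding small model. The Absoluteness Lemma~\ref{le:absolutenessLemma}, applied to $M$, $N$, $h$ and $W$, now yields an elementary embedding $h^*:M\to N$ in $W$ with $\text{cp}(h^*)=\mu$, $h^*(\mu)=\xi$, and $h^*$ agreeing with $h$ on the single value $\ell\restrict\mu+1$. Then $h^*$ is $(\mu,\overline\lambda,\xi,\overline{\lambda_0})$-remarkable and satisfies the three anticipation clauses for $\overline x$, contradicting the previous paragraph. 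Hence $\overline x$ is not $\overline{\lambda_0}$-anticipated by $\ell$ in $V[G]$, so $\overline{\lambda_0}$ is a $V$-regular cardinal with $\xi<\overline{\lambda_0}<\kappa\leq\lambda_0$ at which anticipation fails, contradicting the minimality of $\lambda_0$. The remaining verifications --- that the displayed statement really is first-order over the two-sorted structures, and that the Absoluteness Lemma may be applied with prescribed critical behaviour together with the one prescribed value --- are routine.
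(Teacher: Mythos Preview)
Your proof is correct and follows essentially the same approach as the paper's: reflect the failure of anticipation down via a very remarkable embedding whose critical point lies above $\ell$, and then use the Absoluteness Lemma to verify that the smaller model $H_{\overline\delta}[G_\gamma]$ is correct about the reflected failure. The only cosmetic difference is that the paper applies the Absoluteness Lemma with $W=V[G_\gamma]$ and then argues by size that the resulting embedding lies in $H_{\overline\tau}[G_\gamma]$, whereas you apply it directly with $W=H_{\overline\delta}[G_\gamma]$; and the paper phrases the argument directly rather than as a contradiction on minimality.
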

\begin{proof}
Fix some $x$ that is not $\lambda$-anticipated by $\ell$. Since $\kappa$ is inaccessible, $\ell$ is an element of some $V_\beta$ for $\beta<\kappa$. In $V[G]$, choose $H_\tau[G]$ large enough so that it can see that $x$ is not $\lambda$-anticipated by $\ell$, and let
\begin{displaymath}
j:\la H_{\overline\tau}[G_\gamma], H_{\overline\tau}\ra\to \la H_\tau[G],H_\tau\ra
\end{displaymath}
be a $(\gamma,\overline\tau,\kappa,\tau)$-very remarkable embedding such that $j(\ell)=\ell$ and $x,\lambda\in\text{ran}(j)$ (Proposition~\ref{prop:StatManyEmbeddings}). Let $j(\overline x)=x$ and $j(\alpha)=\lambda$. By elementarity, $\la H_{\overline\tau}[G_\gamma], H_{\overline\tau}\ra$ satisfies that $\overline x$ is not $\alpha$-anticipated by $\ell$. We claim that the structure $\la H_{\overline\tau}[G_\gamma], H_{\overline\tau}\ra$ must be correct about this. Suppose towards a contradiction that $\overline x$ is $\alpha$-anticipated by $\ell$. Then there is some $(\mu,\overline\alpha,\xi,\alpha)$-remarkable embedding $h:H_{\overline\alpha}\to H_\alpha$ such that
\begin{enumerate}
\item $\ell\restrict\mu+1\in H_{\overline\alpha}$,
\item $\mu\in\text{dom}(\ell)$,
\item $h(\ell\restrict\mu+1)(\xi)=\overline x$.
\end{enumerate}
Since $\overline\alpha<\gamma$ and $\gamma$ is inaccessible by elementarity, we have that $H_{\overline\alpha}$ is already countable in $V[G_\gamma]$. By using the absoluteness lemma between $V[G_\gamma]$ and $V[G]$, we have that there is a $(\mu,\overline\alpha,\xi,\alpha)$-remarkable embedding $h':H_{\overline\alpha}\to H_\alpha$ in $V[G_\gamma]$ with $h'(\ell\restrict\mu+1)(\xi)=\overline x$. Since $h'\in V[G_\gamma]$, it must already be an element of $H_{\overline\tau}[G_\gamma]$, which contradicts our assumption that there is no such embedding there.

Thus, we found a set, namely $\overline x$, that is not $\alpha$-anticipated by $\ell$ with $\alpha<\kappa$.
\end{proof}
\begin{lemma}\label{le:AnticipationIsAbsolute}
Suppose that $\kappa$ is remarkable and $\ell\from\xi\to V_\kappa$ for some $\xi<\kappa$. If $G$ and $H$ are two $V$-generic filters for $\Coll(\omega,\ltkappa)$, then $V[G]$ and $V[H]$ must agree on the least $\lambda$ for which some $x$ is not $\lambda$-anticipated by $\ell$, and they must agree on the collection of such $x$.
\end{lemma}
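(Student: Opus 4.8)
The plan is to reduce the whole statement to the single claim that \emph{for every $V$-regular cardinal $\lambda<\kappa$ and every $x\in H_\lambda$, the set $x$ is $\lambda$-anticipated by $\ell$ in $V[G]$ if and only if it is $\lambda$-anticipated by $\ell$ in $V[H]$}. Granting the claim, the lemma follows quickly. By Lemma~\ref{le:UnanticipatedSetInVkappa}, if either of $V[G]$, $V[H]$ contains a set that fails to be $\lambda$-anticipated by $\ell$ for some regular $\lambda$, then the least such $\lambda$ is below $\kappa$ (and if neither does, the two extensions vacuously agree). Since $H_\lambda\subseteq V_\kappa$ for $\lambda<\kappa$, every relevant witness $x$ is a ground-model set common to both extensions, and the claim, applied to every $V$-regular $\lambda<\kappa$, shows that $V[G]$ and $V[H]$ have precisely the same ``bad'' ordinals $\lambda<\kappa$ and, for each such $\lambda$, precisely the same set of $x\in H_\lambda$ witnessing badness. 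Hence they agree on the least bad $\lambda$ (or that there is none) and on the collection of unanticipated $x$ for it.

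To prove the claim, fix a $V$-regular $\lambda<\kappa$ and $x\in H_\lambda$. Unfolding the definition of a $(\mu,\overline\lambda,\xi,\lambda)$-remarkable embedding, ``$x$ is $\lambda$-anticipated by $\ell$'' asserts that there exist a $V$-regular $\overline\lambda<\xi$ and an ordinal $\mu\in\text{dom}(\ell)$ with $\ell\restrict\mu+1\in H_{\overline\lambda}$, together with an elementary embedding $h\colon H_{\overline\lambda}\to H_\lambda$ satisfying $\text{cp}(h)=\mu$, $h(\mu)=\xi$, and $h(\ell\restrict\mu+1)(\xi)=x$. Any such $\overline\lambda$ and $\mu$ are ordinals below $\kappa$, hence elements of $V$, and the side conditions ``$\overline\lambda$ is $V$-regular'', ``$\mu\in\text{dom}(\ell)$'', ``$\ell\restrict\mu+1\in H_{\overline\lambda}$'' are facts about the ground model alone. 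Thus the statement is equivalent to a disjunction, taken over the (set-sized, purely ground-model) collection $P$ of admissible pairs $(\overline\lambda,\mu)$, of the assertions
\begin{displaymath}
\psi_{\overline\lambda,\mu}\colon\quad\exists h\,\bigl(h\colon H_{\overline\lambda}\to H_\lambda\text{ is elementary},\ \text{cp}(h)=\mu,\ h(\mu)=\xi,\ h(\ell\restrict\mu+1)(\xi)=x\bigr).
\end{displaymath}
Each $\psi_{\overline\lambda,\mu}$ mentions only check-name parameters, namely $H_{\overline\lambda}$, $H_\lambda$, $\mu$, $\xi$, $\ell\restrict\mu+1$, and $x$ (here it is essential that $\lambda<\kappa$, so that $H_\lambda$ and $x$ lie in $V$). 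Since $\Coll(\omega,\ltkappa)$ is weakly homogeneous, any condition forcing $\psi_{\overline\lambda,\mu}$ forces it outright; hence $\psi_{\overline\lambda,\mu}$ holds in $V[G]$ if and only if it holds in $V[H]$. Taking the disjunction over $(\overline\lambda,\mu)\in P$ establishes the claim.

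The only point requiring care is the observation that, once Lemma~\ref{le:UnanticipatedSetInVkappa} has confined attention to $\lambda<\kappa$, the entire $\lambda$-anticipation assertion reduces to the existence of an elementary embedding between two \emph{fixed ground-model sets} taking finitely many prescribed values --- a statement with only check-name parameters --- after which weak homogeneity of the collapse does all the work. One could instead transfer the embedding itself via the Absoluteness Lemma~\ref{le:absolutenessLemma}, using that $H_{\overline\lambda}$ is countable in both $V[G]$ and $V[H]$ because $\overline\lambda<\kappa$; but the homogeneity argument is shorter. I do not anticipate any further obstacle.
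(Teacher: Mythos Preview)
Your proof is correct, and the key mechanism---weak homogeneity of $\Coll(\omega,\ltkappa)$ applied to a statement involving only check names---is exactly what the paper uses. The paper's argument is simply more direct: it observes that the compound assertion ``$\lambda$ is least for which some set is not $\lambda$-anticipated by $\ell$, and $a$ is not $\lambda$-anticipated'' already involves only ground-model parameters, and applies homogeneity to that single sentence, without invoking Lemma~\ref{le:UnanticipatedSetInVkappa} at all.

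Your detour through Lemma~\ref{le:UnanticipatedSetInVkappa} to confine attention to $\lambda<\kappa$ is harmless but unnecessary, and rests on a small misconception. You write that ``it is essential that $\lambda<\kappa$, so that $H_\lambda$ and $x$ lie in $V$,'' but by the paper's standing convention $H_\lambda$ always denotes the ground-model $H_\lambda$, and $x\in H_\lambda$ is therefore a ground-model set regardless of whether $\lambda<\kappa$. So each $\psi_{\overline\lambda,\mu}$ has only check-name parameters for \emph{every} $V$-regular $\lambda$, and your claim could have been stated and proved for all such $\lambda$ directly. What you gain from your decomposition is a pointwise absoluteness statement (anticipation of each individual $x$ transfers between extensions), which is a bit more than the lemma asks for; the paper's version trades that granularity for brevity.
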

\begin{proof}
Suppose that in $V[G]$, $a$ is not $\lambda$-anticipated by $\ell$, where $\lambda$ is least for which some set is not $\lambda$-anticipated. Let $\varphi$ be a sentence in the forcing language expressing this situation and note that the only $\Coll(\omega,\ltkappa)$-names $\varphi$ contains are check names. Since $\Coll(\omega,\ltkappa)$ is weakly homogeneous, $\one_{\Coll(\omega,\ltkappa)}\forces\varphi$ and so $V[H]$ must agree that $\varphi$ holds.
\end{proof}
\begin{definition}\label{def:laver}
Suppose that $\kappa$ is remarkable and $W$ is some well-ordering of $V_\kappa$ of order-type $\kappa$. We define a partial function $\ell_W\from\kappa\to V_\kappa$ inductively as follows. Suppose that $\ell_W\restrict\xi$ has been defined. If there is $\lambda$ such that
\begin{center}
$\one_{\Coll(\omega,\ltkappa)}\forces$ there is a set that is not $\lambda$-anticipated by $\ell_W\restrict\xi$,
\end{center}
 then $\ell_W(\xi)$ is the $W$-least $a$ such that
\begin{center}
$\one_{\Coll(\omega,\ltkappa)}\forces \lambda$ is least for which a set is not $\lambda$-anticipated by $\ell_W\restrict\xi$ and $a$ is not $\lambda$-anticipated by $\ell_W\restrict\xi$.
\end{center}
Otherwise, $\ell_W(\xi)$ is undefined.
\end{definition}
\noindent Note that, by Lemma~\ref{le:UnanticipatedSetInVkappa}, if there is a $\lambda$ for which some set is not $\lambda$-anticipated by $\ell$, then the least such $\lambda$ is below $\kappa$ and therefore there will always be a witnessing set in the range of $W$, namely $V_\kappa$. Note also that, using Lemma~\ref{le:AnticipationIsAbsolute}, we can define $\ell_W$ directly in any $\Coll(\omega,\ltkappa)$-extension $V[G]$. Suppose that $\ell_W\restrict \xi$ has been defined. If there is $\lambda$ for which some set is not $\lambda$-anticipated, then we let $\ell_W(\xi)$ be the $W$-least $a$ such that $\lambda$ is the least with that property and $a$ is not $\lambda$-anticipated. Otherwise, $\ell_W(\xi)$ is undefined. In the remainder of the article, whenever we mention $\ell_W$, we will always tacitly assume that $W$ is a well-ordering of $V_\kappa$ of order-type $\kappa$.
\begin{proposition}\label{prop:preimageOfLaverFunction}
Suppose that $\kappa$ is remarkable. In a $\Coll(\omega,\ltkappa)$-forcing extension $V[G]$, if $j:H_{\overline\lambda}\to H_\lambda$ is a $(\gamma,\overline\lambda,\kappa,\lambda)$-remarkable embedding with $W\in\text{\emph{ran}}(j)$, then $j(\ell_W\restrict\gamma)=\ell_W$.
\end{proposition}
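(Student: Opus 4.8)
Here is how I would approach the proof.

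The plan is to use the fact that the recursion of Definition~\ref{def:laver}, although it speaks of the collapse $\Coll(\omega,\ltkappa)$ and of $V$-regular cardinals $\lambda$ that could a priori be unboundedly large, is really ``local below $\kappa$'': by Lemma~\ref{le:UnanticipatedSetInVkappa}, at any stage $\xi<\kappa$ the relevant $\lambda$ is itself below $\kappa$, the witnessing set lies in $V_\kappa$, and any witnessing embedding has hereditary size below $\kappa$. So I would first rewrite the recursion in the equivalent form in which, at stage $\xi$, one only searches for $V$-regular $\lambda<\kappa$ for which some $a\in H_\lambda$ is not $\lambda$-anticipated by $\ell_W\restrict\xi$, and $\ell_W(\xi)$ is the $W$-least such $a$ for the least such $\lambda$.

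Next I would set $\overline W=j^{-1}(W)$. Since $\text{cp}(j)=\gamma$ and $\gamma$ is inaccessible (by elementarity, as $\kappa$ is inaccessible and $H_\lambda$ is correct about this), $j\restrict V_\gamma$ is the identity, so $\overline W=W\cap(V_\gamma\times V_\gamma)$, which by elementarity is a well-ordering of $V_\gamma$ of order type $\gamma$, and $j(x)\in V_\gamma\Leftrightarrow x\in V_\gamma$ for every $x$. I would then pass to the lift of $j$ (via the lifting criterion) to the $(\gamma,\overline\lambda,\kappa,\lambda)$-very remarkable embedding
\begin{displaymath}
j:\la H_{\overline\lambda}[G_\gamma],H_{\overline\lambda}\ra\to\la H_\lambda[G],H_\lambda\ra,\qquad j(G_\gamma)=G,
\end{displaymath}
which still maps $\overline W$ to $W$ and $\gamma$ to $\kappa$.

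Now the argument proper, in two steps. \emph{Step A.} The structure $\la H_\lambda[G],H_\lambda,G,W,\kappa\ra$ computes the (rewritten) recursion relative to $\kappa$ and $W$ correctly, i.e.\ the function it produces is the genuine $\ell_W$. This is a recursion on $\xi<\kappa$: every object named at stage $\xi$ --- the cardinal $\nu^*<\kappa$, the witness $a\in H_{\nu^*}$, and any embedding $h\colon H_{\overline\nu}\to H_\nu$ witnessing anticipation --- has hereditary size $<\kappa<\lambda$, hence lies in $H_\lambda[G]=H_\lambda^{V[G]}$, so $H_\lambda[G]$ decides ``$a$ is $\nu$-anticipated by $\ell_W\restrict\xi$'' exactly as $V[G]$ does. \emph{Step B.} By elementarity, Step A transfers through $j$: $\la H_{\overline\lambda}[G_\gamma],H_{\overline\lambda},G_\gamma,\overline W,\gamma\ra$ computes the recursion relative to $\gamma$ and $\overline W$ correctly and thereby produces $\overline\ell:=j^{-1}(\ell_W)$, so $j(\overline\ell)=\ell_W$ and it remains to show $\overline\ell=\ell_W\restrict\gamma$. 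For this I would prove by recursion on $\xi<\gamma$ that $\ell_W(\xi)\in V_\gamma$ whenever $\xi\in\text{dom}(\ell_W)$: assuming $\ell_W\restrict\xi\in V_\gamma$ (so $j$ fixes it), elementarity sends the statement that $\la H_\lambda[G],\dots\ra$ finds a $\nu<\kappa=j(\gamma)$ with a set not $\nu$-anticipated by $\ell_W\restrict\xi$ to the statement that the corresponding least such $\nu$ computed in $\la H_{\overline\lambda}[G_\gamma],\dots\ra$ is below $\gamma$; being below $\gamma$ it is fixed by $j$, and by elementarity and Step A it equals the true $\nu^*$, so $\nu^*<\gamma$ and hence $\ell_W(\xi)\in H_{\nu^*}\subseteq V_\gamma$. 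The inactive case and the limit stages (using that $\gamma$ is regular) are routine. Consequently $\overline\ell(\xi)=j^{-1}(\ell_W(\xi))=\ell_W(\xi)$ for all $\xi<\gamma$, so $\overline\ell=\ell_W\restrict\gamma$ and $j(\ell_W\restrict\gamma)=\ell_W$.

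I expect the main obstacle to be the point hidden in Step~B: one needs the witnessing cardinals at stages below $\gamma$ to stay below $\gamma$, and this does \emph{not} follow from ``$\gamma$ is remarkable in $H_{\overline\lambda}$'' (which may genuinely fail, for instance when $2^\kappa$ is large), but only from reflecting through $j$ the bound ``$<\kappa$'' supplied by Lemma~\ref{le:UnanticipatedSetInVkappa} in $V$. A secondary point requiring care is the absoluteness of the ``$\lambda$-anticipated'' predicate for the relevant small parameters: for $H_\lambda[G]$ it is a rank computation, but for the (countable in $V[G]$) structure $H_{\overline\lambda}[G_\gamma]$ it genuinely invokes the Absoluteness Lemma~\ref{le:absolutenessLemma} for countable embeddings. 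Once these are in place the proposition follows by elementarity and the uniqueness of the object defined by the recursion.
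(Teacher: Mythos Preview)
Your proof is correct and follows the same approach as the paper: lift $j$ to a very remarkable embedding, note that $\overline W:=j^{-1}(W)$ is the initial segment of $W$ well-ordering $V_\gamma$, observe that $\overline\ell:=j^{-1}(\ell_W)$ is defined over $\la H_{\overline\lambda}[G_\gamma],H_{\overline\lambda}\ra$ by the same recursion relative to $\gamma$ and $\overline W$, and then argue that $\overline\ell=\ell_W\restrict\gamma$. The paper dispatches this last step in a single sentence, invoking the absoluteness lemma to conclude that $H_{\overline\lambda}[G_\gamma]$ and $V[G]$ agree about which sets are anticipated by initial segments of $\ell_W$. Your inductive argument---reflecting the bound $\nu^*<\kappa$ supplied by Lemma~\ref{le:UnanticipatedSetInVkappa} down through $j$ to get $\nu^*<\gamma$---is a more explicit unpacking of essentially the same idea, with the virtue of making visible why the least witnessing cardinal at stages $\xi<\gamma$ must itself lie below $\gamma$, a point the paper's compressed argument leaves implicit.
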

\begin{proof}
By replacing $j$ with its lift, we can assume that  $j$ is $(\gamma, \overline{\lambda}, \kappa, \lambda)$-very remarkable. Since $\kappa,W\in\text{ran}(j)$ and $\ell_W$ is definable from these over $H_\lambda[G]$, it follows that $\ell_W\in\text{ran}(j)$. So let $j(\overline W)=W$ and $j(\overline \ell)=\ell_W$. By elementarity, $\overline \ell\from \gamma\to V_\gamma$ is definable over $H_{\overline\lambda}[G_\gamma]$ precisely as $\ell_W$ with respect to the well-ordering $\overline W$, which is itself an initial segment of $W$ that well-orders $V_\gamma$ in order-type $\gamma$. The conclusion now follows because, by the absoluteness lemma, $H_{\overline\lambda}[G_\gamma]$ must agree with $V[G]$ about which sets are not anticipated by the initial segments of $\ell_W$.
\end{proof}
We now come to the main theorem of this section.
\begin{theorem}
If $\kappa$ is remarkable, then $\ell_W$ has the remarkable Laver property, and by restricting the domain of $\ell_W$ we obtain a remarkable Laver function.
\end{theorem}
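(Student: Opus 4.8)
\emph{Proof plan.} The plan is to argue by contradiction, following the template of Laver's original proof but with Propositions~\ref{prop:StatManyEmbeddings}, \ref{prop:remarkableEmbeddingRestriction}, and~\ref{prop:preimageOfLaverFunction} playing the role of supercompactness. Suppose $\ell_W$ fails the remarkable Laver property. Since $\ell_W\in V$ and the relevant forcing statements mention only check names, the proof of Lemma~\ref{le:AnticipationIsAbsolute} applies to $\ell_W$, so there is a regular cardinal $\lambda>\kappa$, not depending on the $V$-generic $G\of\Coll(\omega,\ltkappa)$, that is least such that in $V[G]$ some $x\in H_\lambda$ is not $\lambda$-anticipated by $\ell_W$; fix such a $G$. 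The first step I would carry out is to record a two-way absoluteness principle for anticipation: an embedding $h\colon H_{\overline\lambda'}\to H_{\lambda'}$ witnessing that a set is $\lambda'$-anticipated by an initial segment of $\ell_W$ has countable domain in each of $V[G]$, $H_\delta[G]$ (for regular $\delta>\lambda$), and the models $H_{\overline\delta}[G_\gamma]$ arising below, because the collapse forcing makes everything of size below the relevant large cardinal countable there; since the corresponding $H_{\lambda'}$ is an element of each of these models, Lemma~\ref{le:absolutenessLemma} transfers such an $h$ in either direction while preserving its critical point, the value $h(\mu)$, and the finitely many further values appearing in the defining clauses. Hence all these models agree about which sets are $\lambda'$-anticipated by initial segments of $\ell_W$, and thus about which regular cardinals are ``bad'' for such an initial segment and which sets witness it.

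Next I would carry out the reflection. Fix a regular $\delta>\lambda$; by the absoluteness principle $H_\delta[G]$ agrees that $\lambda$ is least bad for $\ell_W$. Using Proposition~\ref{prop:StatManyEmbeddings}, choose a $(\gamma,\overline\delta,\kappa,\delta)$-very remarkable embedding
\[
j\colon\la H_{\overline\delta}[G_\gamma],H_{\overline\delta}\ra\to\la H_\delta[G],H_\delta\ra
\]
with $W,\lambda\in\text{ran}(j)$, and set $\overline\lambda=j^{-1}(\lambda)$. By Proposition~\ref{prop:preimageOfLaverFunction}, $j(\ell_W\restrict\gamma)=\ell_W$, so elementarity together with the absoluteness principle gives that $\overline\lambda$ is genuinely the least regular cardinal bad for $\ell_W\restrict\gamma$ and that some set in $H_{\overline\lambda}$ is not $\overline\lambda$-anticipated by $\ell_W\restrict\gamma$. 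Since $\gamma<\kappa$, Lemma~\ref{le:UnanticipatedSetInVkappa} yields $\overline\lambda<\kappa$, so every such witness lies in $H_{\overline\lambda}\of V_\kappa$, the field of $W$; consequently $\gamma\in\text{dom}(\ell_W)$ and, by Definition~\ref{def:laver}, $a:=\ell_W(\gamma)$ is the $W$-least such witness. In particular $a\in H_{\overline\lambda}$, the function $\ell_W\restrict\gamma+1$ lies in $H_{\overline\lambda}$, and — again by the absoluteness principle — $H_{\overline\delta}[G_\gamma]$ correctly sees that $a$ is not $\overline\lambda$-anticipated by $\ell_W\restrict\gamma$.

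The contradiction then comes from examining $j(a)$ two ways. By Proposition~\ref{prop:remarkableEmbeddingRestriction}, $j$ restricts to a $(\gamma,\overline\lambda,\kappa,\lambda)$-remarkable embedding $j\colon H_{\overline\lambda}\to H_\lambda$; since $\ell_W\restrict\gamma+1\in H_{\overline\lambda}$, $\gamma\in\text{dom}(\ell_W)$, and $j(\ell_W\restrict\gamma+1)(\kappa)=j(a)$, this embedding witnesses that $j(a)$ \emph{is} $\lambda$-anticipated by $\ell_W$ in $V[G]$. On the other hand, $H_{\overline\delta}[G_\gamma]$ sees that $a$ is not $\overline\lambda$-anticipated by $\ell_W\restrict\gamma$, so by elementarity of $j$ (using $j(\ell_W\restrict\gamma)=\ell_W$ and $j(\overline\lambda)=\lambda$) the model $H_\delta[G]$ sees that $j(a)\in H_\lambda$ is not $\lambda$-anticipated by $\ell_W$, whence by the absoluteness principle $j(a)$ really is not $\lambda$-anticipated by $\ell_W$ in $V[G]$. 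This contradiction establishes that $\ell_W$ has the remarkable Laver property. For the last clause, observe that every critical point $\gamma$ arising from this kind of reflection is inaccessible (since $\kappa$ is and $j(\gamma)=\kappa$) and satisfies $\ell_W\image\gamma\of V_\gamma$ (since $\ell_W\restrict\gamma=j^{-1}(\ell_W)$ takes values in $j^{-1}(V_\kappa)=V_\gamma$). Using this, a parallel contradiction argument — now applied to the assertion that every $x$ is $\lambda$-anticipated by $\ell_W$ via an embedding whose critical point is inaccessible and absorbs $\ell_W$ below it — shows that the remarkable Laver property of $\ell_W$ is witnessed by such embeddings. Hence restricting the domain of $\ell_W$ to $D=\{\xi\in\text{dom}(\ell_W):\xi\text{ inaccessible},\ \ell_W\image\xi\of V_\xi\}$ leaves a function $\ell_W\restrict D$ that still anticipates every set, and it satisfies the extra requirement by construction, so it is a remarkable Laver function.

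The step I expect to demand the most care is the interlocking of Proposition~\ref{prop:preimageOfLaverFunction} with the absoluteness of anticipation: it is precisely this that lets the reflecting model $H_{\overline\delta}[G_\gamma]$ recognize $\ell_W(\gamma)$ as a set that is not $\overline\lambda$-anticipated by $\ell_W\restrict\gamma$ at the critical level, which is what makes the final contradiction possible. A pleasant feature of the argument, once this is in place, is that the reflecting embedding $j$ need not carry the originally chosen bad witness $x$ at all; the image $j(\ell_W(\gamma))$ of $\ell_W$'s own value already supplies a set that is simultaneously $\lambda$-anticipated and not $\lambda$-anticipated by $\ell_W$.
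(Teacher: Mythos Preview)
Your proposal is correct and follows essentially the same approach as the paper's proof: a contradiction argument that reflects the least bad $\lambda$ down via a very remarkable embedding $j$ with $W$ in its range, uses Proposition~\ref{prop:preimageOfLaverFunction} to identify $j(\ell_W\restrict\gamma)=\ell_W$, observes that $\ell_W$ must then be defined at $\gamma$ with value in $H_{\overline\lambda}$, and derives the contradiction from the restricted embedding $j\restrict H_{\overline\lambda}$. The only cosmetic differences are that the paper obtains $\lambda\in\text{ran}(j)$ by definability rather than by request, and localizes the contradiction inside $H_\tau[G]$ (noting $j'\in H_\tau[G]$) rather than pulling it back to $V[G]$ via absoluteness; your element $j(a)$ is exactly the paper's $y=j(\ell_W\restrict\gamma+1)(\kappa)$.
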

\begin{proof}
Suppose that $G\subseteq\Coll(\omega,\ltkappa)$ is $V$-generic. We work in $V[G]$ and suppose towards a contradiction that $\ell_W$ does not have the remarkable Laver property. Let $\lambda>\kappa$ be the least regular cardinal such that there is $x\in H_\lambda$ that is not $\lambda$-anticipated by $\ell_W$. Choose $H_\tau[G]$ large enough so that it can see this. Let
\begin{displaymath}
j:\la H_{\overline\tau}[G_\gamma],H_{\overline\tau}\ra\to \la H_\tau[G],H_\tau\ra
\end{displaymath}
be a $(\gamma,\overline\tau,\kappa,\tau)$-very remarkable embedding with $W\in\text{ran}(j)$ (Proposition~\ref{prop:StatManyEmbeddings}). By Proposition~\ref{prop:preimageOfLaverFunction},  $j(\ell_W\restrict\gamma)=\ell_W$. Observe that $\lambda\in\text{ran}(j)$ because it is definable as the least for which some set is not $\lambda$-anticipated by $\ell_W$. So let $j(\overline\lambda)=\lambda$. Also, since $\text{ran}(j)$ is elementary in $H_\tau[G]$, there must be some $x=j(\overline x)$ that is not $\lambda$-anticipated by $\ell_W$.

First, we argue that $\gamma\in\text{dom}(\ell_W)$ and $\ell_W\restrict \gamma+1$ is an element of $H_{\overline\tau}[G_\gamma]$. By elementarity, $H_{\overline\tau}[G_\gamma]$ satisfies that $\overline x$ is not $\overline\lambda$-anticipated by $\ell_W\restrict\gamma$, and it must be correct about this by the absoluteness lemma. Thus, $\ell_W$ is defined at $\gamma$, and moreover $\ell_W(\gamma)\in H_{\overline\lambda}$. Now we can consider $j(\ell_W\restrict\gamma+1)(\kappa)$. Because $H_\tau[G]$ knows that $\lambda$ is the least for which there is a set that is not $\lambda$-anticipated by $\ell_W$, by elementarity upwards, $j(\ell_W\restrict\gamma+1)(\kappa)=y$, where $y$ is some set that $H_{\tau}[G]$ thinks is not $\lambda$-anticipated by $\ell_W$. Now, using Proposition~\ref{prop:remarkableEmbeddingRestriction}, we restrict $j$ to a $(\gamma,\overline\lambda,\kappa,\lambda)$-very remarkable embedding. Call this restriction $j'$. Since $\ell_W\restrict\gamma\from \gamma\to V_\gamma$ and $\ell_W(\gamma)\in H_{\overline\lambda}$, it follows that $\ell_W\restrict\gamma+1\in H_{\overline\lambda}$. So $j'(\ell_W\restrict\gamma+1)(\kappa)=y$ and, by size considerations, $j'\in H_\tau[G]$. But now we have reached a contradiction because this means that $y$ was indeed $\lambda$-anticipated by $\ell_W$ and $H_\tau[G]$ can see this.

Finally, it remains to observe that by restricting the domain of $\ell_W$, we can assume without loss of generality, that it is defined only at inaccessible cardinals $\xi$ such that $\ell_W\restrict \xi\subseteq V_\xi$.
\end{proof}
A remarkable Laver function $\ell$ is needed in indestructibility arguments because whenever an iterated forcing $\p_\kappa$ of length $\kappa$ is defined to have nontrivial stages only for values in the domain of $\ell$, in the $\Coll(\omega,\ltkappa)$-forcing extension $V[G]$, we can find for any regular $\lambda$, a $(\gamma,\overline\lambda,\kappa,\lambda)$-remarkable embedding $j$ such that there is no forcing in $\p_\kappa$ in the interval $(\gamma,\overline\lambda]$.
\begin{lemma}\label{le:LaverFunctionGap}
Suppose that $\kappa$ is remarkable and $\ell_W$ is a remarkable Laver function. In a $\Coll(\omega,\ltkappa)$-forcing extension $V[G]$, for every regular cardinal $\lambda>\kappa$, there is a $(\gamma,\overline\lambda,\kappa,\lambda)$-remarkable embedding $j$ such that
\begin{enumerate}
\item $(\gamma,\overline\lambda]\cap \text{\emph{dom}}(\ell_W)=\emptyset$,
\item $\ell_W(\gamma)$ is defined,
\item $j(\ell_W\restrict\gamma)=\ell_W$.
\end{enumerate}
Given any $a,b\in H_\lambda$, we can additionally assume that
\begin{itemize}
\item[(4)] $a,b\in \text{ran}(j)$,
\item[(5)] $\ell_W(\gamma)=\la \overline a,x\ra$ for some set $x$, where $j(\overline a)=a$.
\end{itemize}
\end{lemma}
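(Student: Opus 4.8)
The plan is to run the argument inside a $\Coll(\omega,\ltkappa)$-extension $V[G]$, producing $j$ by feeding a suitably coded parameter into the remarkable Laver property of $\ell_W$ and then extracting the gap from an absoluteness analysis. Fix a regular $\lambda>\kappa$ and $a,b\in H_\lambda$; note that $W$, having size $\kappa$, also lies in $H_\lambda$.

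First I would apply the remarkable Laver property of $\ell_W$ to the single set $c:=\la a,\la b,W\ra\ra\in H_\lambda$. This yields a $(\gamma,\overline\lambda,\kappa,\lambda)$-remarkable embedding $j\colon H_{\overline\lambda}\to H_\lambda$ whose critical point $\gamma$ lies in $\text{dom}(\ell_W)$, with $\ell_W\restrict\gamma+1\in H_{\overline\lambda}$ and $j(\ell_W\restrict\gamma+1)(\kappa)=c$. Condition~(2) is then immediate. Because $j(\ell_W\restrict\gamma+1)=j(\ell_W\restrict\gamma)\cup\{\la\kappa,j(\ell_W(\gamma))\ra\}$ and $j(\gamma)=\kappa$, evaluating at $\kappa$ gives $j(\ell_W(\gamma))=c$; by elementarity and the pairing structure of $c$ this forces $\ell_W(\gamma)=\la\overline a,\la\overline b,\overline W\ra\ra$ with $j(\overline a)=a$, $j(\overline b)=b$, $j(\overline W)=W$, yielding~(4) and~(5) (with $x=\la\overline b,\overline W\ra$). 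Finally, since $W\in\text{ran}(j)$, Proposition~\ref{prop:preimageOfLaverFunction} gives $j(\ell_W\restrict\gamma)=\ell_W$, which is~(3). The coding of $W$ into $c$ is exactly what licenses this last appeal.

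The remaining work is condition~(1), the gap $\text{dom}(\ell_W)\cap(\gamma,\overline\lambda]=\emptyset$, and this is where the real content lies. The plan is to lift $j$ to the $(\gamma,\overline\lambda,\kappa,\lambda)$-very remarkable embedding $j\colon\la H_{\overline\lambda}[G_\gamma],H_{\overline\lambda}\ra\to\la H_\lambda[G],H_\lambda\ra$ with $j(G_\gamma)=G$, and to observe that any witnessing embedding for $\lambda'$-anticipation with $\lambda'<\lambda$ has hereditary size below $\lambda$; hence $H_\lambda[G]$ --- itself a model of $\ZFC^-$ --- correctly decides $\lambda'$-anticipation for every such $\lambda'$. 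Since $\ell_W$ has the full remarkable Laver property, $H_\lambda[G]$ therefore believes that $\ell_W$ anticipates every set at every $V$-regular level strictly between $\kappa$ and $\lambda$, so that $\ell_W$ has no bad level there. Pulling this back through $j$ --- using $j(\gamma)=\kappa$ and $j(\ell_W\restrict\gamma)=\ell_W$, together with the absoluteness of anticipation between $H_{\overline\lambda}[G_\gamma]$ and $V[G]$ exactly as in the proof of Proposition~\ref{prop:preimageOfLaverFunction} --- one obtains, back in $V[G]$, that $\ell_W\restrict\gamma$ anticipates every set at every $V$-regular level in $(\gamma,\overline\lambda)$; equivalently, the least bad level of $\ell_W\restrict\gamma$ (which exists since $\gamma\in\text{dom}(\ell_W)$, and is below $\kappa$ by Lemma~\ref{le:UnanticipatedSetInVkappa}) is already at least $\overline\lambda$.

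I expect the main obstacle to be the bookkeeping that converts this into~(1): the assertion ``$\delta\in\text{dom}(\ell_W)$'' for $\delta\in(\gamma,\overline\lambda]$ concerns $\ell_W\restrict\delta$ with \emph{intended} domain $\delta$, so its anticipation witnesses send their critical points to $\delta$, whereas the reflection through $j$ naturally delivers information about $\ell_W\restrict\gamma$ with intended domain $\gamma$. Reconciling these requires first passing to a least such $\delta$, noting that then $\ell_W\restrict\delta=\ell_W\restrict\gamma+1$ as a function, and then exploiting that $\ell_W(\gamma)$ was, by definition, chosen to be the $W$-least set not anticipated by $\ell_W\restrict\gamma$ at its least bad level --- which by the previous paragraph is at least $\overline\lambda$. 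Pushing the (hypothetical) witness of $\delta\in\text{dom}(\ell_W)$ through $j$ and comparing it against what $H_\lambda[G]$ can see then forces $\delta>\overline\lambda$, contradicting $\delta\le\overline\lambda$ and completing the argument.
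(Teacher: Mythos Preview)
Your derivation of conditions (2)--(5) is correct, but your argument for condition~(1) --- the gap $(\gamma,\overline\lambda]\cap\text{dom}(\ell_W)=\emptyset$ --- does not go through as sketched, and the final paragraph is too vague to be a proof. The reflection you perform does show that the least bad level of $\ell_W\restrict\gamma$ (with intended domain $\gamma$) is at least $\overline\lambda$, but this says nothing about whether some $\delta\in(\gamma,\overline\lambda]$ lies in $\text{dom}(\ell_W)$: that question concerns anticipation for $\ell_W\restrict\delta$ with intended domain $\delta$, whose witnesses are $(\mu,\cdot,\delta,\cdot)$-remarkable embeddings, not $(\mu,\cdot,\gamma,\cdot)$-remarkable ones. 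Your proposed fix, ``push the hypothetical witness of $\delta\in\text{dom}(\ell_W)$ through $j$'', is not an argument: for $\delta<\overline\lambda$ you only get an ordinal $j(\delta)\in(\kappa,\lambda)$, and there is no object in $H_{\overline\lambda}[G_\gamma]$ that records the \emph{actual} behaviour of $\ell_W$ on $(\gamma,\overline\lambda]$ (only $\ell_W\restrict\gamma$ lives there), so elementarity of $j$ cannot transport that information. I do not see how to complete the argument along these lines.

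The paper's proof avoids this entirely by a different and much shorter idea. Rather than anticipating a set in $H_\lambda$ at level $\lambda$, it anticipates a set of rank \emph{strictly above} $\lambda$ --- namely $\la a,\la\lambda+1,W,b\ra\ra$ --- at some regular level $\delta>\lambda$. This puts $\lambda$ and $W$ in $\text{ran}(j)$, and, crucially, forces $j(\ell_W(\gamma))\notin V_\lambda$, hence $\ell_W(\gamma)\notin V_{\overline\lambda}$ by elementarity. Now the defining clause of a remarkable Laver function --- that $\ell_W\image\xi\subseteq V_\xi$ for every $\xi\in\text{dom}(\ell_W)$ --- gives the gap in one line: any $\xi\in(\gamma,\overline\lambda]\cap\text{dom}(\ell_W)$ would yield $\ell_W(\gamma)\in V_\xi\subseteq V_{\overline\lambda}$, a contradiction. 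One then restricts $j$ down to a $(\gamma,\overline\lambda,\kappa,\lambda)$-remarkable embedding via Proposition~\ref{prop:remarkableEmbeddingRestriction}. Your choice to anticipate a set lying \emph{inside} $H_\lambda$ is exactly what forfeits this rank leverage; the correction is simply to code $\lambda+1$ into the anticipated parameter.
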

\begin{proof}
Since $\ell_W$ is a remarkable Laver function, we can fix a $(\gamma,\overline\delta,\kappa,\delta)$-remarkable embedding $j:H_{\overline\delta}\to H_\delta$ such that
\begin{enumerate}
\item $\ell_W\restrict\gamma+1\in H_{\overline\delta}$,
\item $\gamma\in\text{dom}(\ell_W)$,
\item $j(\ell_W\restrict\gamma+1)(\kappa)=\la \lambda+1,W\ra$.
\end{enumerate}
Our choice of $j(\ell_W\restrict\gamma+1)(\kappa)$ places both $\lambda$ and $W$ in the range of $j$. Thus, $j(\ell_W\restrict\gamma)=\ell_W$ by Proposition~\ref{prop:preimageOfLaverFunction}. Let $j(\overline\lambda)=\lambda$. Since $j(\ell_W\restrict\gamma+1)(\kappa)\notin V_\lambda$, by elementarity, it follows that $\ell_W(\gamma)\notin V_{\overline\lambda}$. But $\ell_W$ is a remarkable Laver function and so $\ell_W\restrict\xi\subseteq V_\xi$ for all $\xi$ in domain of $\ell_W$. Thus, $\ell_W$ cannot have anything in its domain between $\gamma$ and  $\overline\lambda$.  It follows that the restriction of $j$ to a $(\gamma,\overline\lambda,\kappa,\lambda)$-remarkable embedding (using Proposition~\ref{prop:remarkableEmbeddingRestriction}) has all the desired properties.

For the additional conclusions, we just modify
\begin{displaymath}
j(\ell_W\restrict\gamma+1)(\kappa)=\la a,\la\lambda+1,W,b\ra\ra.
\end{displaymath}
\end{proof}

\section{Indestructible remarkable cardinals}\label{sec:mainTheorem}
The indestructibility properties of remarkable cardinals closely resemble those of strong cardinals, of which they are generally viewed as a miniature version. At the conclusion of this section, we will show that if $\kappa$ is remarkable, then there is a forcing extension in which its remarkability becomes indestructible by all $\ltkappa$-closed $\leq\kappa$-distributive forcing and by all two-step iterations of the form $\Add(\kappa,\theta)*\dot{\mathbb R}$, where $\dot{\mathbb R}$ is forced to be $\ltkappa$-closed and $\lesseq\kappa$-distributive. We will also show that remarkability is preserved by the canonical forcing of the $\GCH$.
\subsection{Small forcing}
It is straightforward to see that remarkable cardinals are indestructible by small forcing.
\begin{proposition}
Remarkable cardinals are indestructible by small forcing.
\end{proposition}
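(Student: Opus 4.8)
The plan is a L\'evy--Solovay-style argument built on the generic-supercompactness characterization of remarkability: small forcing is absorbed by $\Coll(\omega,\ltkappa)$, so the remarkable embeddings of $V$ available in the L\'evy collapse restrict to suitable embeddings over the small-forcing extension. Let $\p$ be a forcing notion with $|\p|<\kappa$; since remarkable cardinals are inaccessible we may assume $\p$ is a partial order whose underlying set is an ordinal $\theta<\kappa$, so that $\p$ and any $V$-generic $g\of\p$ have rank below $\gamma$ whenever $\theta<\gamma<\kappa$. Fix such a $g$; we must show $\kappa$ stays remarkable in $V[g]$. First note $\Coll(\omega,\ltkappa)$ is literally the same poset in $V$ and in $V[g]$, and that $\p\times\Coll(\omega,\ltkappa)$ is forcing equivalent to $\Coll(\omega,\ltkappa)$ by an equivalence that absorbs the $\p$-part into the single coordinate $\Coll(\omega,\theta)$. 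Hence every $V[g]$-generic $G'$ for $\Coll(\omega,\ltkappa)$ has the form $V[g][G']=V[G]$ for some $V$-generic $G\of\Coll(\omega,\ltkappa)$ such that, for every $\gamma$ with $\theta<\gamma<\kappa$, $g$ lies in $V[G_\gamma]$ and is definable there from $G_\gamma$. Since $\Coll(\omega,\ltkappa)$ is weakly homogeneous over $V[g]$ and the relevant forcing statement over $V[g]$ (``for every regular $\lambda>\kappa$ there is a $V[g]$-regular $\overline\lambda<\kappa$ and an embedding $H_{\overline\lambda}^{V[g]}\to H_\lambda^{V[g]}$ sending its critical point to $\kappa$'') has only check parameters, it suffices to exhibit the required embeddings inside one such $V[G]$.

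So fix a regular $\lambda>\kappa$; it is regular in $V$ as well and equals $\lambda^{V[g]}$, because $|\p|<\kappa\le\lambda$. Working in $V[G]$, use Proposition~\ref{prop:StatManyEmbeddings}, together with the fact that critical points of very remarkable embeddings can be taken arbitrarily high below $\kappa$ (and that $\theta$ is countable in $V[G]$), to fix a $(\gamma,\overline\lambda,\kappa,\lambda)$-very remarkable embedding
\[
j:\la H_{\overline\lambda}[G_\gamma],H_{\overline\lambda}\ra\to\la H_\lambda[G],H_\lambda\ra
\]
with $\gamma>\theta$. Since $\p$ and $g$ have rank below $\gamma=\text{cp}(j)$, the standard fact that an elementary embedding of transitive models fixes $V_{\text{cp}(j)}$ pointwise gives $j(\p)=\p$ and $j(g)=g$; moreover $\p\in H_{\overline\lambda}$, and since $g$ is recoverable from $G_\gamma$ we have $g\in H_{\overline\lambda}[G_\gamma]$. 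Now the key point: $H_{\overline\lambda}^{V[g]}$ is exactly $\{\tau^g:\tau\in H_{\overline\lambda}\text{ a }\p\text{-name}\}$ --- by the usual nice-name argument, using that $\overline\lambda$ is $V$-regular and above $\theta$ --- and this collection is definable over $\la H_{\overline\lambda}[G_\gamma],H_{\overline\lambda}\ra$ from the parameters $\p,g$ (the name-interpretation map being a definable recursion). The same formula, with $j(\p)=\p$ and $j(g)=g$, defines $\{\tau^g:\tau\in H_\lambda\text{ a }\p\text{-name}\}=H_\lambda^{V[g]}$ over $\la H_\lambda[G],H_\lambda\ra$.

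It follows that $j$ restricts to an elementary embedding $j:H_{\overline\lambda}^{V[g]}\to H_\lambda^{V[g]}$: for $a\in H_{\overline\lambda}^{V[g]}$ and any formula $\varphi$, the assertion ``$H_{\overline\lambda}^{V[g]}\models\varphi(a)$'' is first-order over $\la H_{\overline\lambda}[G_\gamma],H_{\overline\lambda}\ra$ with parameters $\p,g,a$, so by elementarity of $j$ it transfers to ``$H_\lambda^{V[g]}\models\varphi(j(a))$''; likewise $j$ maps $H_{\overline\lambda}^{V[g]}$ into $H_\lambda^{V[g]}$. This restricted embedding still has critical point $\gamma$ with $j(\gamma)=\kappa$, and $\overline\lambda<\kappa$ is regular in $V[g]$, being $V$-regular and above $|\p|$. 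As $\lambda$ and $G'$ were arbitrary, this witnesses remarkability of $\kappa$ in $V[g]$.

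The main obstacle is the bookkeeping in the first paragraph: arranging the absorption so that the $\p$-generic is recoverable from an initial segment $G_\gamma$ of $G$ with $\gamma$ chosen as large as we please below $\kappa$. This rests on the uniqueness (up to forcing equivalence) of $\Coll(\omega,\theta)$ among separative atomless posets of size $\le\theta$ collapsing $\theta$, applied to $\p\times\Coll(\omega,\theta)$. Everything afterward is routine tracking of definability and elementarity, together with the folklore fact that an elementary embedding of transitive $\ZFC^-$-models is the identity below its critical point.
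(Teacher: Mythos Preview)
Your argument is correct and rests on the same core idea as the paper's---find a remarkable embedding $j:H_{\overline\lambda}\to H_\lambda$ with critical point $\gamma$ above the rank of $\p$, then extend it to $H_{\overline\lambda}^{V[g]}\to H_\lambda^{V[g]}$ using that $j$ fixes $g$ pointwise---but you take a noticeably more circuitous path. The absorption of $\p$ into $\Coll(\omega,\ltkappa)$ is unnecessary: since $\Coll(\omega,\ltkappa)$ is literally the same poset in $V$ and $V[g]$, any $V[g]$-generic $H$ is automatically $V$-generic, so $V[H]$ is already a $\Coll(\omega,\ltkappa)$-extension of $V$ and contains a remarkable (not very remarkable) embedding $j:H_{\overline\lambda}\to H_\lambda$ with $\gamma$ above the rank of $\p$ by Proposition~\ref{prop:StatManyEmbeddings}. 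The paper then simply observes that $j\in V[g][H]$ and applies the lifting criterion with $j\image g=g\subseteq g$ to get $j:H_{\overline\lambda}[g]\to H_\lambda[g]$ directly, finishing with the trivial identifications $H_{\overline\lambda}[g]=H_{\overline\lambda}^{V[g]}$ and $H_\lambda[g]=H_\lambda^{V[g]}$. Your route avoids the lifting criterion in favor of a definability argument for $H_{\overline\lambda}^{V[g]}$ inside $\la H_{\overline\lambda}[G_\gamma],H_{\overline\lambda}\ra$, but this is really the lifting criterion in disguise, and it forces you to carry the absorption bookkeeping solely to place $g$ in $H_{\overline\lambda}[G_\gamma]$. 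Both arguments work; the paper's is a two-line lift where yours shoulders the extra weight of the absorption lemma, the very-remarkable machinery, and a (redundant, since $G'$ was arbitrary) weak-homogeneity reduction.
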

\begin{proof}
Suppose that $\kappa$ is remarkable and fix a poset $\p$ such that $|\p|<\kappa$. By considering an isomorphic copy of $\p$, if necessary, we can assume that $\p\in V_\kappa$. Let $g\subseteq\p$ be $V$-generic and let $H\subseteq\Coll(\omega,\ltkappa)$ be $V[g]$-generic. We need to show that $V[g][H]$ has a $(\gamma,\overline\lambda,\kappa,\lambda)$-remarkable embedding for every regular $\lambda>\kappa$.

Fix a regular $\lambda>\kappa$. Since the definition of $\Coll(\omega,\ltkappa)$ is absolute, $V[H]$ is a $\Coll(\omega,\ltkappa)$-forcing extension. So in $V[H]$, we can fix a $(\gamma,\overline\lambda,\kappa,\lambda)$-remarkable embedding $j:H_{\overline\lambda}\to H_\lambda$ with $\gamma$ above the rank of $\p$ (by Proposition~\ref{prop:StatManyEmbeddings}). Clearly $j\in V[g][H]$ as well. By the lifting criterion, $j$ lifts to
\begin{displaymath}
j: H_{\overline\lambda}[g]\to H_\lambda[g]
\end{displaymath}
in $V[g][H]$. But clearly, because $\p$ is small relative to $\overline\lambda$, we have that $H_{\overline\lambda}[g]=H_{\overline\lambda}^{V[g]}$, $H_\lambda[g]=H_\lambda^{V[g]}$, and $\overline\lambda$ remains regular in $V[g]$.
\end{proof}
\subsection{Indestructibility by $\Add(\kappa,1)$}
As a warm-up theorem to the more general results, let's show that a remarkable cardinal $\kappa$ can be made indestructible by $\Add(\kappa,1)$.
\begin{theorem}
If $\kappa$ is remarkable, then there is a forcing extension in which the remarkability of $\kappa$ becomes indestructible by $\Add(\kappa,1)$.
\end{theorem}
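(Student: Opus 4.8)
The plan is to adapt Laver's preparation to the remarkable setting, using the remarkable Laver function $\ell_W$ together with a surgery argument in the spirit of Woodin. Let $\p_\kappa$ be the length-$\kappa$ Easton-support iteration that forces with $\Add(\xi,1)$ at each stage $\xi\in\text{dom}(\ell_W)$ and trivially elsewhere. Since every $\xi\in\text{dom}(\ell_W)$ is inaccessible, hence a strong limit with $2^{<\xi}=\xi$, a routine computation (using check names for the absolute posets $\Add(\xi,1)$) shows that $\p_\kappa$ is $\kappa$-cc of size $\kappa$ and that $\p_\gamma\of V_\gamma$ whenever $\gamma<\kappa$ is inaccessible. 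I claim that forcing with $\p_\kappa$ makes the remarkability of $\kappa$ indestructible by $\Add(\kappa,1)$. So let $G_{\p_\kappa}\of\p_\kappa$ be $V$-generic, $g\of\Add(\kappa,1)^{V[G_{\p_\kappa}]}$ be $V[G_{\p_\kappa}]$-generic, and $G\of\Coll(\omega,\ltkappa)$ be $V[G_{\p_\kappa}][g]$-generic; write $V_1=V[G_{\p_\kappa}]$, $V_2=V_1[g]$ and $V_3=V_2[G]$. Fixing a regular cardinal $\lambda>\kappa$ of $V_2$---which is then $V$-regular as well, $\p_\kappa*\Add(\kappa,1)$ being $\kappa^+$-cc---I must produce in $V_3$ a $V_2$-regular $\overline\lambda<\kappa$ and an embedding $H_{\overline\lambda}^{V_2}\to H_\lambda^{V_2}$ with critical point sent to $\kappa$.

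Since $G$ is in particular $V$-generic for $\Coll(\omega,\ltkappa)$ and $\kappa$ is remarkable in $V$, Lemma~\ref{le:LaverFunctionGap} provides, in $V[G]\of V_3$, a $(\gamma,\overline\lambda,\kappa,\lambda)$-remarkable embedding $j:H_{\overline\lambda}\to H_\lambda$ with $(\gamma,\overline\lambda]\cap\text{dom}(\ell_W)=\emptyset$, with $\ell_W(\gamma)$ defined, and with $j(\ell_W\restrict\gamma)=\ell_W$. Since $\ell_W(\gamma)$ is defined, the stage-$\gamma$ forcing of $\p_\kappa$ is $\Add(\gamma,1)$; and since $\p_\gamma$ is definable over $H_{\overline\lambda}$ from $\ell_W\restrict\gamma$, elementarity gives $j(\p_\gamma*\Add(\gamma,1))=\p_\kappa*\Add(\kappa,1)$. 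The gap $(\gamma,\overline\lambda]\cap\text{dom}(\ell_W)=\emptyset$ makes the remainder $\p_\kappa/\p_{\gamma+1}$ of $\p_\kappa$ after stage $\gamma$ be $\lesseq{\overline\lambda}$-closed, and $\Add(\kappa,1)^{V_1}$ is $\ltkappa$-closed; as both are therefore $\lesseq{\overline\lambda}$-distributive over their respective models, $H_{\overline\lambda}^{V_2}=H_{\overline\lambda}^{V[G_{\p_\gamma}][g_\gamma]}$, where $G_{\p_\gamma}$ and $g_\gamma$ denote the parts of $G_{\p_\kappa}$ below and at stage $\gamma$. This pins down the domain of the embedding to be built.

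We now lift $j$, working in $V_3$, which contains $j$, $G_{\p_\kappa}$ and $g$. Because the conditions of $\p_\gamma$ lie in $V_\gamma$ and $\gamma=\text{cp}(j)$, we have $j\image G_{\p_\gamma}=G_{\p_\gamma}\of G_{\p_\kappa}$, so by the Lifting Criterion $j$ lifts to $j:H_{\overline\lambda}[G_{\p_\gamma}]\to H_\lambda[G_{\p_\kappa}]=H_\lambda^{V_1}$. For the stage-$\gamma$ step $j$ carries $\Add(\gamma,1)$ to $\Add(\kappa,1)^{V_1}$ and fixes all of its conditions, so it is enough to find a $V_1$-generic $g^*\of\Add(\kappa,1)^{V_1}$ with $g_\gamma\of g^*$ and $V_1[g^*]=V_2$; this is the surgery. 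Since $\Add(\kappa,1)^{V_1}$ is $\ltkappa$-distributive over $V_1$, the restriction $f_g\restrict\gamma$ of the generic function added by $g$ already lies in $V_1$, as does the generic function $f_{g_\gamma}\colon\gamma\to 2$ added by $g_\gamma$; hence the coordinatewise bit-flip map $\sigma$ of $\Add(\kappa,1)^{V_1}$ that flips coordinate $\alpha<\gamma$ by $f_g(\alpha)\oplus f_{g_\gamma}(\alpha)$ and fixes coordinates $\ge\gamma$ is an automorphism lying in $V_1$. Put $g^*:=\sigma\image g$: then $g^*$ is $V_1$-generic, $V_1[g^*]=V_1[g]=V_2$, and the generic function of $g^*$ agrees with $f_{g_\gamma}$ below $\gamma$, so $g_\gamma\of g^*$. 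Thus $j\image g_\gamma=g_\gamma\of g^*$, and the Lifting Criterion yields, in $V_3$,
\begin{displaymath}
j:H_{\overline\lambda}^{V_2}=H_{\overline\lambda}^{V[G_{\p_\gamma}][g_\gamma]}\to H_\lambda^{V_1[g^*]}=H_\lambda^{V_2},
\end{displaymath}
with critical point $\gamma$ and $j(\gamma)=\kappa$. As $\p_\kappa*\Add(\kappa,1)$ factors as a forcing of size $\le\gamma$ followed by a $\lesseq{\overline\lambda}$-closed one, neither of which disturbs the regularity of $\overline\lambda$, this $j$ witnesses the remarkability of $\kappa$ in $V_2$ for the given $\lambda$; since $\lambda$ was arbitrary and $\Coll(\omega,\ltkappa)$ is weakly homogeneous, $\kappa$ is remarkable in $V_2$, and since $g$ was an arbitrary $\Add(\kappa,1)$-generic this proves the theorem.

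The crux I expect is the surgery step: one must produce a single target generic $g^*$ for $\Add(\kappa,1)$ that simultaneously (i) is genuinely $V_1$-generic, (ii) reconstitutes the prescribed extension $V_2=V_1[g]$, so that the lifted embedding has the right codomain, and (iii) absorbs $j\image g_\gamma$. For $\Add(\kappa,1)$ this is handled cleanly by the bit-flip automorphism, which works precisely because the $\ltkappa$-closure of $\Add(\kappa,1)$ over $V_1$ deposits the relevant initial segments of the generic functions back into $V_1$. In the full theorem, where the bottom of the two-step iteration is $\Add(\kappa,\theta)$ for arbitrary $\theta$, the analogous surgery becomes the genuinely delicate Woodin-style modification of the generic on a carefully chosen set, and there the value $\ell_W(\gamma)$---used here only through clauses (1)--(3) of Lemma~\ref{le:LaverFunctionGap}---will be exploited, via clauses (4)--(5), to anticipate names for the additional forcing $\dot{\mathbb R}$.
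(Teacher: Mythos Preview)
Your argument is essentially the paper's: the same preparation $\p_\kappa$ from the remarkable Laver function, the same use of Lemma~\ref{le:LaverFunctionGap} to obtain a gap above $\gamma$, and the same automorphism trick on $\Add(\kappa,1)$---your explicit bit-flip is exactly the paper's automorphism $\pi$ ``which switches $g\restrict\gamma$ with $p$.'' The one point to adjust is the framing: you claim the witnessing extension is $V_1=V^{\p_\kappa}$, but you never verify that $\kappa$ is remarkable in $V_1$ itself, only in $V_1[g]$ for each $g$; the paper sidesteps this by taking the witnessing extension to be $V^{\p_\kappa*\Add(\kappa,1)}$ and invoking $\Add(\kappa,1)\times\Add(\kappa,1)\cong\Add(\kappa,1)$ to deduce indestructibility there from the single fact (which you do prove) that $\kappa$ is remarkable after $\p_\kappa*\Add(\kappa,1)$.
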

\begin{proof}
Let's fix a remarkable Laver function $\ell_W$. Let $\p_\kappa$ be the $\kappa$-length Easton support iteration that forces with $\Add(\xi,1)^{V^{\p_\xi}}$ at stages $\xi$ in the domain of $\ell_W$, whenever $\xi$ remains a cardinal in $V^{\p_\xi}$. We will argue that the remarkability of $\kappa$ is indestructible by $\Add(\kappa,1)$ in any forcing extension by $\p_\kappa*\Add(\kappa,1)$. To show this, it suffices to argue that $\kappa$ remains remarkable after forcing with $\p_\kappa*\Add(\kappa,1)$ because $\Add(\kappa,1)\times\Add(\kappa,1)\cong\Add(\kappa,1)$.

So suppose that $G*g\subseteq\p_\kappa*\Add(\kappa,1)$ is $V$-generic and $H\subseteq\Coll(\omega,\ltkappa)$ is $V[G][g]$-generic. We need to show that $V[G][g][H]$ has a $(\gamma,\overline\lambda,\kappa,\lambda)$-remarkable embedding for every regular $\lambda>\kappa$.

Fix a regular $\lambda>\kappa$. In $V[H]$, we fix, using Lemma~\ref{le:LaverFunctionGap}, a $(\gamma,\overline\lambda,\kappa,\lambda)$-remarkable embedding $j:H_{\overline\lambda}\to H_\lambda$ such that
\begin{enumerate}
\item $(\gamma,\overline\lambda]\cap \text{dom}(\ell_W)=\emptyset$,
\item $\ell_W(\gamma)$ is defined,
\item $j(\ell_W\restrict\gamma)=\ell_W$.
\end{enumerate}
It is easy to see that $\p_\kappa$ preserves all inaccessible cardinals. By elementarity, $\gamma$ is inaccessible, and so, in particular, it remains a cardinal after forcing with $\p_\gamma$. Thus, there is forcing at stage $\gamma$ in $\p_\kappa$.

Let $G_\gamma*g_\gamma$ be the restriction of $G$ to $\p_\gamma*\Add(\gamma,1)$. Since $j(\ell_W\restrict\gamma)=\ell_W$, we have $j(\p_\gamma)=\p_\kappa$. Thus, by the lifting criterion, $j$ lifts to
\begin{displaymath}
j:H_{\overline\lambda}[G_\gamma]\to H_\lambda[G]
\end{displaymath}
in $V[G][g][H]$. Next, we lift $j$ to $H_{\overline\lambda}[G_\gamma][g_\gamma]$. Observe that $j\image g_\gamma=g_\gamma$ and $p=\bigcup g_\gamma$ is a condition in $\Add(\kappa,1)^{V[G]}$. The lifting criterion is not satisfied outright because there is no reason to suppose that $p\in g$, but this is easily fixed. Let $\pi$ be an automorphism of $\Add(\kappa,1)^{V[G]}$ in $V[G]$, which switches $g\restrict\gamma$ with $p$. Recall that $\pi\image g$ is $V[G]$-generic for $\Add(\kappa,1)^{V[G]}$ and $V[G][g]=V[G][\pi\image g]$. Thus, by replacing $g$ with $\pi\image g$ if necessary, we can assume that $p\in g$. Thus, we can lift $j$ to
\begin{displaymath}
j:H_{\overline\lambda}[G_\gamma][g_\gamma]\to H_\lambda[G][g]
\end{displaymath}
in $V[G][g][H]$. Since there is no forcing in $\p_\kappa$ in the interval $(\gamma,\overline\lambda]$, it follows that $H_{\overline\lambda}[G_\gamma][g_\gamma]=H_{\overline\lambda}^{V[G][g]}$. Also, clearly $H_\lambda[G][g]=H_\lambda^{V[G][g]}$. Finally, $\overline\lambda$ remains regular in $V[G][g]$ because $\p_\gamma*\Add(\gamma,1)$ has size $\gamma$, and therefore cannot affect the regularity of $\overline\lambda$, and the next forcing in the iteration is above $\overline\lambda$.
\end{proof}

\subsection{Indestructibility by $\Add(\kappa,\theta)$}

We can greatly generalize the result of the previous section, by employing more sophisticated techniques, to show that a remarkable $\kappa$ can be made simultaneously indestructible by all posets $\Add(\kappa,\theta)$.
\begin{theorem}\label{th:addCohenSubsetsIndestructible}
If $\kappa$ is remarkable, then there is a forcing extension in which the remarkability of $\kappa$ becomes indestructible by all forcing of the form $\Add(\kappa,\theta)$ for a cardinal $\theta$.
\end{theorem}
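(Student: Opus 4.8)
The plan is to combine the preparatory iteration from the $\Add(\kappa,1)$ theorem with Woodin's technique of surgery, using the remarkable extender embeddings of Proposition~\ref{prop:remarkableExtender} for the necessary closure. Fix a remarkable Laver function $\ell_W$ and let $\p_\kappa$ be the $\kappa$-length Easton support iteration that, at each stage $\xi\in\text{dom}(\ell_W)$ at which $\xi$ remains a cardinal, forces with $\Add(\xi,\delta_\xi)^{V^{\p_\xi}}$, where $\delta_\xi$ is the cardinal coded by $\ell_W(\xi)$ (and $\delta_\xi=1$ if $\ell_W(\xi)$ codes none). Since $\ell_W(\xi)\in V_\kappa$, we have $\delta_\xi<\kappa$, so $|\p_\kappa|=\kappa$ and $\p_\kappa$ is $\kappa$-c.c. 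It suffices to show that in any extension $V[G]$ by $\p_\kappa$ the remarkability of $\kappa$ survives forcing with $\Add(\kappa,\theta)$ for every cardinal $\theta$. So fix such a $\theta$ and a $V[G]$-generic $g\subseteq\Add(\kappa,\theta)^{V[G]}$, fix a $V[G][g]$-generic $H\subseteq\Coll(\omega,\ltkappa)$ (which is also $V$-generic, as $\Add(\kappa,\theta)$ is $\ltkappa$-closed), and fix a regular cardinal $\lambda>\kappa$ of $V[G][g]$; by restricting a larger embedding using Proposition~\ref{prop:remarkableEmbeddingRestriction}, we may assume $\lambda$ is $V$-regular and large enough that $\Add(\kappa,\theta)\in H_\lambda$ and $H_\lambda^{V[G][g]}=H_\lambda[G][g]$.

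Working in $V[H]$, apply Lemma~\ref{le:LaverFunctionGap} with $a=\theta$ to obtain a $(\gamma,\overline\lambda,\kappa,\lambda)$-remarkable embedding $j:H_{\overline\lambda}\to H_\lambda$ with $(\gamma,\overline\lambda]\cap\text{dom}(\ell_W)=\emptyset$, with $\ell_W(\gamma)$ defined and $j(\ell_W\restrict\gamma)=\ell_W$, and with $\ell_W(\gamma)$ coding a cardinal $\overline\theta<\overline\lambda$ such that $j(\overline\theta)=\theta$; then the stage-$\gamma$ forcing of $\p_\kappa$ is $\Add(\gamma,\overline\theta)$ and $j$ maps it to $\Add(\kappa,\theta)$. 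Pass to the remarkable extender embedding $j^*:H_{\overline\lambda}\to N$ with factor $h:N\to H_\lambda$, $\text{cp}(h)>\kappa$, and $j=h\circ j^*$; since $\ell_W\restrict\gamma$ has rank below $\text{cp}(h)$ it is fixed by $h$, so $j^*(\ell_W\restrict\gamma)=\ell_W$ and $j^*(\p_\gamma)=\p_\kappa$. Because, by the product lemma, $G\times H$ is $V$-generic for $\p_\kappa\times\Coll(\omega,\ltkappa)$, the filter $G$ is generic over $V[H]$, hence over the countable model $N$; so the lifting criterion lifts the whole diagram over $\p_\gamma$, giving $j^*:H_{\overline\lambda}[G_\gamma]\to N[G]$, $h:N[G]\to H_\lambda[G]$, and $j:H_{\overline\lambda}[G_\gamma]\to H_\lambda[G]$, where $G_\gamma=G\restrict\p_\gamma$.

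It remains to lift $j$ over the stage-$\gamma$ forcing $\Add(\gamma,\overline\theta)$, arriving at $H_\lambda^{V[G][g]}=H_\lambda[G][g]$. Writing $g_\gamma$ for the stage-$\gamma$ part of $G$, the filter $j^*\image g_\gamma$ is generated by a single master condition of size $\ltkappa$, whose $h$-image $p\in\Add(\kappa,\theta)$ has domain $\gamma\times j\image\overline\theta$. In general $j\image g_\gamma\not\subseteq g$, since $j$ relocates the coordinates lying above $\gamma$. The remedy is surgery: modify $g$ on the $\ltkappa$-sized coordinate set $\text{dom}(p)$ so that the resulting filter $g^*$ contains $p$; Woodin's surgery lemma shows that, because every condition of $\Add(\kappa,\theta)$ meets $\text{dom}(p)$ in a set of size $\ltkappa$, the modification preserves genericity, so $g^*$ remains $H_\lambda[G]$-generic. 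Pulling $g^*$ back through $h$ to an $N[G]$-generic $g_N\subseteq j^*(\Add(\gamma,\overline\theta))$ containing $j^*\image g_\gamma$, the lifting criterion then produces $j:H_{\overline\lambda}[G_\gamma][g_\gamma]\to H_\lambda[G][g^*]$; and since there is no $\p_\kappa$-forcing on $(\gamma,\overline\lambda]$ and $\Add(\kappa,\theta)$ is $\ltkappa$-closed, the domain is exactly $H_{\overline\lambda}^{V[G][g]}$, with $\overline\lambda$ still regular in $V[G][g]$. The heart of the argument — and where I expect the real work — is twofold: proving the surgery lemma (that the modified filter is still generic), and arranging that the codomain is exactly $H_\lambda^{V[G][g]}$ rather than some larger model, for which one must track precisely which coordinates are relocated, observe that the original values of $g$ there lie in $V[G]$ by $\ltkappa$-closure, and argue the surgery data $g^*$ adds to $V[G][g]$ is harmless — if need be appealing to the Absoluteness Lemma~\ref{le:absolutenessLemma}, since $H_{\overline\lambda}^{V[G][g]}$ is countable in $V[G][g][H]$ and $H_\lambda^{V[G][g]}$ is an element there, to descend to an embedding of the desired form inside $V[G][g][H]$.
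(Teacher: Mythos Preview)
Your approach has a genuine gap in the surgery step. The master condition $p$ has domain $\gamma\times j\image\overline\theta$, and for $\theta>\kappa$ (hence $\overline\theta>\gamma$) the set $j\image\overline\theta$ encodes $j\restrict\overline\theta$, which lives in $V[H]$ and is generally \emph{not} in $V[G]$. Consequently the bit-flipping automorphism that turns $g$ into $g^*$ is not an automorphism in $V[G]$, and the assertion that ``the modification preserves genericity'' fails. In fact it can fail badly: if $A=j\image\overline\theta\setminus\gamma$ is a set of coordinates not in $V[G]$, then from $g^*$ one can recover $A$ (generically $A$ is exactly the set of $\xi$ for which the $\xi$-th column of $g^*$ agrees with some fixed column of $g_\gamma$ on an initial segment), so $V[G][g^*]$ contains a subset of $\theta$ of size ${<}\kappa$ not in $V[G]$, contradicting ${<}\kappa$-closure of $\Add(\kappa,\theta)$ if $g^*$ were $V[G]$-generic. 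Your hedge about the Absoluteness Lemma does not rescue this: that lemma requires the \emph{target} model $H_\lambda^{V[G][g]}$ to receive the embedding, but the lift you construct lands in $H_\lambda[G][g^*]$, a different structure.

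The paper circumvents this by switching to the characterization of remarkability via $(\kappa,\lambda)$-remarkable pairs (Theorem~\ref{th:equivalentRemarkable}). One takes a countable $M'\prec H_\delta$, reflects the Laver-function set-up into $M'$ to obtain $\overline\sigma:H^{M'}_\alpha\to H^{M'}_{\lambda'}$, and forms the extender factor $\sigma:M\to N$ together with $\pi=\rho\circ\overline\sigma:M\to H_\lambda$. The two embeddings are then lifted separately: $\pi$ is lifted by choosing the actual $V$-generic $G*g$ to be generic over the countable set $\pi\image M$ (possible by countable closure of $\p_\kappa*\Add(\kappa,\theta)$), so no surgery on $g$ is ever needed; and $\sigma$ is lifted by performing Woodin's surgery on an $N[G']$-generic $g'$, where $N[G']$ is countable and has the extender representation $N[G']=\{\sigma(f)(a):a\in S^{<\omega},\,f\in M[\overline G]\}$. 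That representation is precisely what makes the surgery argument go through: every antichain of $\q_{\kappa'}$ in $N[G']$ has the form $\sigma(f)(a)$, so the part of its support touched by the modification is covered by $\sigma\image\overline J$ for some $\overline J$ of size $\overline\kappa$ in $M[\overline G]$, and this lies in $N[G']$ by Proposition~\ref{prop:cpSizeImage}. Your set-up provides the extender model $N$, but you never use it for the surgery; you do the surgery on the $H_\lambda[G]$ side, where there is no such representation, and that is why the argument breaks.
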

\begin{proof}
Let's fix a remarkable Laver function $\ell_W$. Let $\p_\kappa$ be the $\kappa$-length Easton support iteration that forces with $\Add(\xi,\mu)^{V^{\p_\xi}}$ at stages $\xi$ such that $l_W(\xi)=\la \mu,x\ra$ for some set $x$, whenever $\xi$ and $\mu$ are cardinals in $V^{\p_\xi}$. We will argue that $\kappa$ has the desired indestructibility in any forcing extension by $\p_\kappa$.

First, we argue that $\kappa$ remains remarkable in any forcing extension by $\p_\kappa$. Suppose that $G\subseteq\p_\kappa$ is $V$-generic and $H\subseteq \Coll(\omega,\ltkappa)$ is $V[G]$-generic. We need to show that $V[G][H]$ has a $(\gamma,\overline\lambda,\kappa,\lambda)$-remarkable embedding for every regular $\lambda>\kappa$.

Fix a regular $\lambda>\kappa$. In $V[H]$, we fix, using Lemma~\ref{le:LaverFunctionGap}, a $(\gamma,\overline\lambda,\kappa,\lambda)$-remarkable embedding
\begin{displaymath}
j:H_{\overline\lambda}\to H_\lambda
\end{displaymath}
such that
\begin{enumerate}
\item $(\gamma,\overline\lambda]\cap \text{dom}(\ell_W)=\emptyset$,
\item $j(\ell_W\restrict\gamma)=\ell_W$,
\item $\ell_W(\gamma)=\la \overline a,x\ra$, where $\overline a$ is not an ordinal.
\end{enumerate}
Observe that there is no forcing in $\p_\kappa$ at stage $\gamma$ because $\ell_W(\gamma)$ does not have the required form.  Thus, there is no forcing in $\p_\kappa$ at stages in $[\gamma,\overline\lambda]$. Since $j(\ell_W\restrict\gamma)=\ell_W$, we have $j(\p_\gamma)=\p_\kappa$. Thus, by the lifting criterion, $j$ lifts to
\begin{displaymath}
j:H_{\overline\lambda}[G_\gamma]\to H_\lambda[G]
\end{displaymath}
in $V[G][H]$. Since there is no forcing in $\p_\kappa$ on the interval $[\gamma,\overline\lambda]$, it follows that $H_{\overline\lambda}[G_\gamma]=H_{\overline\lambda}^{V[G]}$. Also, clearly $H_{\lambda}[G]=H_{\lambda}^{V[G]}$ and $\overline\lambda$ remains regular in $V[G]$. This completes the argument that $\kappa$ is remarkable in $V[G]$.

Fix a cardinal $\theta>\kappa$. Next, we argue that $\kappa$ is remarkable in any forcing extension by $\p_\kappa*\Add(\kappa,\theta)$. We will use the characterization of remarkable cardinals given in Theorem~\ref{th:equivalentRemarkable} and show how to lift a $(\kappa,\lambda)$-remarkable pair of embeddings by combining results about remarkable extender embeddings from  Proposition~\ref{prop:remarkableExtender} with Woodin's technique of surgery.

So let's suppose towards a contradiction that $\kappa$ is not remarkable in a $\p_\kappa*\Add(\kappa,\theta)$ forcing extension. Then there is a regular $\lambda>\kappa$ and a condition $q\in \p_\kappa*\Add(\kappa,\theta)$ forcing that there is no $(\kappa,\lambda)$-remarkable pair of embeddings $\pi$ and $\sigma$. We can assume that $\lambda\!>\!>\!\theta$ because if there are embeddings for arbitrarily large $\lambda'$, then we can always find some such embedding with $\lambda$ in the range and restrict.

Let $\delta=(2^\lambda)^+$. Let $Y$ be a countable elementary substructure of $H_\delta$ containing $q$, $\kappa$, $\theta$, $\lambda$, and the well-order $W$, and let $M'$ be the Mostowski collapse of $Y$. The inverse of the collapsing map is an embedding
\begin{displaymath}
\rho:M'\to H_\delta
\end{displaymath}
with $\rho(q')=q$, $\rho(\kappa')=\kappa$, $\rho(\theta')=\theta$, $\rho(\lambda')=\lambda$, and $\rho(\ell'_{W'})=\ell_W$. In $M'$, we can define the Easton support $\kappa'$-length iteration  $\overline \p_{\kappa'}$ that forces with $\Add(\xi,\mu)^{(M')^{\overline \p_\xi}}$ at stages $\xi$ such that $\ell'_{W'}(\xi)=\la \mu,x\ra$ for some set $x$, whenever $\xi$ and $\mu$  are cardinals in $(M')^{\overline\p_\xi}$. Clearly $\rho(\overline\p_{\kappa'})=\p_\kappa$. Since $M'$ is countable, we can choose some $M'$-generic $h$ for $\Coll(\omega,\lt\kappa')^{M'}$. By elementarity (and Lemma~\ref{le:LaverFunctionGap}), $M'[h]$ has a $(\overline\kappa,\alpha,\kappa',\lambda')$-remarkable embedding
\begin{displaymath}
\overline\sigma:H_\alpha^{M'}\to H_{\lambda'}^{M'}
\end{displaymath}
with $\overline\sigma(\overline q)=q'$, $\overline\sigma(\overline \ell)=\ell'_{W'}$, and $\overline\sigma(\overline\theta)=\theta'$ such that
\begin{enumerate}
\item $\text{dom}(\ell'_{W'})\cap (\overline\kappa,\alpha]=\emptyset$,
\item $\ell'_{W'}(\overline\kappa)=\la \overline\theta,x\ra$ for some set $x$,
\item $\overline \ell=\ell'_{W'}\restrict\overline\kappa$.
\end{enumerate}
In particular, we have $\overline\sigma(\overline\p_{\overline\kappa})=\overline\p_{\kappa'}$ and $\overline\p_{\kappa'}$ forces with $\Add(\overline\kappa,\overline\theta)^{(M')^{\overline\p_{\overline\kappa}}}$ at stage $\overline\kappa$. Let
\begin{displaymath}
\sigma:H^{M'}_\alpha\to N
\end{displaymath}
be the remarkable extender embedding obtained from $\overline\sigma$. Note that $H^{M'}_\alpha\subseteq N$ and $\overline\sigma$ and $\sigma$ agree on $V^{M'}_{\overline\kappa+1}$. Thus, for instance, it continues to be the case that $\sigma(\overline \ell)=\ell'_{W'}$ and $\sigma(\overline\p_{\overline\kappa})=\overline\p_{\kappa'}$. Let $M=H_\alpha^{M'}$ and
\begin{displaymath}
\pi:M\to H_\lambda,
\end{displaymath}
where $\pi=\rho\circ\overline\sigma$. Note that $\pi(\overline\kappa)=\kappa$ and $\pi(\overline q)=q$. Clearly $\pi$ and $\sigma$ constitute a $(\kappa,\lambda)$-remarkable pair of embeddings.

Since $\p_\kappa*\Add(\kappa,\theta)$ is in particular countably closed, we can find a $V$-generic filter $G*g\subseteq \p_\kappa*\Add(\kappa,\theta)$ that is $X$-generic for $X=\pi\image M$ with $q\in G*g$. Let $\overline G*\overline g$ be the pre-image of $X\cap (G*g)$ under $\pi$. Since $G*g$ is $X$-generic, it follows that $\overline G*\overline g$ is $M$-generic for $\overline\p_{\overline\kappa}*\dot\q_{\overline\kappa}$, where $\dot\q_{\overline\kappa}$ is the $\Add(\overline\kappa,\overline\theta)$ of $M^{\overline\p_{\overline\kappa}}$. Thus, the lifting criterion is satisfied by construction, and so $\pi$ lifts to
\begin{displaymath}
\pi:M[\overline G][\overline g]\to H_\lambda[G][g]
\end{displaymath}
in $V[G][g]$. It remains to argue that we can lift $\sigma$ to $M[\overline G][\overline g]$ so that $\pi$ and $\sigma$ continue to constitute a $(\kappa,\lambda)$-remarkable pair.

First, we lift $\sigma$ to $M[\overline G]$. As we noted earlier, $\sigma(\overline\p_{\overline\kappa})=\overline\p_{\kappa'}$ and $\overline\p_{\kappa'}$ has $\overline\p_{\overline\kappa}*\dot\q_{\overline\kappa}$ as an initial segment. Since $N$ is countable, we can choose some $N$-generic $G'$ for $\overline\p_{\kappa'}$ extending $\overline G*\overline g$ and, use the lifting criterion to lift $\sigma$ to
\begin{displaymath}
\sigma:M[\overline G]\to  N[G']
\end{displaymath}
in $V[G][g]$.

Next, we lift $\sigma$ to $M[\overline G][\overline g]$. Let $\q_{\overline\kappa}=\Add(\overline\kappa,\overline\theta)^{M[\overline G]}$, let $\q_{\kappa'}=\Add(\kappa',\sigma(\overline\theta))^{N[G']}$, and let $g'$ be any $N[G']$-generic for $\q_{\kappa'}$, which exists because $N[G']$ is countable.

Recall that conditions in a poset of the form $\Add(\kappa,\theta)$ are partial functions $p\from\kappa\times\theta\to 2$ with domain of size less than $\kappa$. Given a function $p\from\kappa\times\theta\to 2$, we shall call the set $\{\xi\mid \exists \beta\,(\beta,\xi)\in\text{dom}(p)\}$ the \emph{support} of $p$, denoted by $\supp(p)$, and given a fixed $\xi$ in the support of $p$, we shall let $p_\xi\from\kappa\to 2$ be the function defined by $p_\xi(\beta)=p(\beta,\xi)$. Let
\begin{displaymath}
P=\Union \sigma\image \overline g.
\end{displaymath}
Clearly $P\from\kappa'\times\sigma(\overline\theta)\to 2$, but there is no reason to suppose that it is an element of $\q_{\kappa'}$, and so we cannot do the standard master condition argument. Instead, we will apply Woodin's surgery technique (see \cite{cummings:handbook} or \cite{golshani:surgery} for a more thorough presentation), which replaces the generic $g'$ with a `surgically altered' version that contains $\sigma\image \overline g$. Given $p\in \q_{\kappa'}$, let $p^*$ be the result of altering $p$ to agree with $P$. More specifically, $p^*$ has the same domain as $p$ and
\[ p^*(\beta,\xi) = \begin{cases}
      P(\beta,\xi) & \textrm{ if $(\beta,\xi)\in\text{dom}(P)$} \\
      p(\beta,\xi) & \textrm{ otherwise}. \\
   \end{cases} \]
 We will argue, in a moment, that each $p^*\in \q_{\kappa'}$, and moreover $g^*=\{p^*\mid p\in g'\}$ is $N[G']$-generic for $\q_{\kappa'}$. So let's assume this and finish the lifting argument. Fix $p\in \overline g$. Since every element of $g^*$ is compatible with $\sigma(p)$, it follows that $\sigma(p)\in g^*$. Thus, $g^*$ satisfies the lifting criterion, and so we can lift $\sigma$ to
\begin{displaymath}
\sigma:M[\overline G][\overline g]\to N[G'][g^*]
\end{displaymath}
in $V[G][g]$.
The ordinal $\alpha$ remains a regular cardinal in $N[G'][g^*]$ because $\overline\p_{\kappa'}$ has no forcing in the interval $(\overline\kappa,\alpha]$. So it remains to argue that $M[\overline G][\overline g]=H_\alpha^{N[G'][g^*]}$. But this follows from the fact that $\overline G*\overline g$ is an initial segment of $G'$, and that all subsequent forcing in $\overline\p_{\kappa'}$ occurs after stage $\alpha$.

Now we explain the details of the surgery argument. Since $\overline g$ is $M[\overline G]$-generic, the support of $\overline P=\Union \overline g$ is $\overline\theta$. It follows that the support of $P$ is $\sigma\image\overline\theta$ and $P_{\sigma(\xi)}=\overline P_\xi$. Now let's fix $p\in\q_{\kappa'}$ and argue that $p^*\in \q_{\kappa'}$. Recall that $\sigma:M\to N$ is a remarkable extender embedding with
\begin{displaymath}
N=\{\sigma(f)(a)\mid a\in S^{\lt\omega},\,f\in M\},
\end{displaymath}
where $S=V_{\kappa'}^N\union\{\kappa'\}$. It follows from a standard argument about lifts of extender embeddings that therefore
\begin{displaymath}
N[G']=\{\sigma(f)(a)\mid a\in S^{\lt\omega},\,f\in M[\overline G]\}.
\end{displaymath}
Let $\overline S=V_{\overline\kappa}^M\union \{\overline\kappa\}$ and note that $\overline S$ has size $\overline\kappa$ in $M[\overline G]$. Thus, $p=\sigma(f)(a)$ for some $f\in M[\overline G]$ and $a\in S^{\lt\omega}$, and we can assume that $f:\overline S^{\lt\omega}\to \q_{\overline\kappa}$. Let's consider the intersection of the supports of $P$ and $p$. If $\sigma(\xi)$ is in the support of $p$, then by elementarity, $\xi$ must be in the support of $f(x)$ for some $x\in \overline S^{\lt\omega}$. Let
\begin{displaymath}
I=\Union_{x\in\overline S^{\lt\omega}}\text{supp}(f(x))
\end{displaymath}
be the union of the supports of all elements in the range of $f$. Because the domain of $f$ has size $\overline\kappa$ and each $f(x)$ has support less than $\overline\kappa$, it follows that $|I|=\overline\kappa$ in $M[\overline G]$. Thus, $\sigma\image I$ is an element of $N[G']$ by Proposition~\ref{prop:cpSizeImage}. Now observe that to obtain $p^*$, we just need to alter $p$ to agree with $P$ on the part of its support that is contained in $\sigma\image I$. Since both $\sigma\image I$ and $\overline P$ are elements of $N[G']$, it follows that so is $p^*$. Finally, we argue that $g^*$ is $N[G']$-generic for $\q_{\kappa'}$. It is clear that $g^*$ is a filter. Fix a maximal antichain $A$ of $\q_{\kappa'}$ in $N[G']$ and let $A=\sigma(f)(a)$ for some $f:\overline S^{\lt\omega}\to \mathcal A$ in $M[\overline G]$ and $a\in S^{\lt\omega}$, where $\mathcal A$ is the collection of all antichains of $\q_{\overline\kappa}$. Let
\begin{displaymath}
J=\Union_{p\in A}\supp(p)
\end{displaymath}
be the union of the supports of all conditions in $A$ and let
\begin{displaymath}
\overline J=\Union_{p\in f(x),\,x\in \overline S^{\lt\omega}}\supp(p)
\end{displaymath}
be the union of the supports of all conditions in all antichains in the range of $f$. Since $\q_{\overline\kappa}$ has the $\overline\kappa^+$-cc in $M[\overline G]$, it follows that $\overline J$ has size $\overline\kappa$ in $M[\overline G]$. Let's consider the intersection of $J$ and the support of $P$.  If $\sigma(\xi)$ is in $J$, then by elementarity, $\xi$ is in $\overline J$.  As before, $\sigma\image \overline J$ is an element of $N[G']$. From $\sigma\image \overline J$ and $\overline P$, $N[G']$ can construct the set
\begin{displaymath}
X=\Union_{p\in A}\text{dom}(p)\cap \text{dom}(P).
\end{displaymath}
Since $X$ has size $\overline\kappa$, $g^*\restrict X$ is in $N[G']$. Let $Y$ be the complement of $X$ in $\kappa'\times\sigma(\overline\theta)$, and note that $\q_{\kappa'}$ is naturally isomorphic to $\q_{\kappa'}^X\times \q_{\kappa'}^Y$, where $\q_{\kappa'}^X$ consists of those conditions in $\q_{\kappa'}$ whose domain is contained in $X$ and $\q_{\kappa'}^Y$ is defined similarly. Let 
\begin{displaymath}
A'=\{p\restrict Y\mid p\in A\text{ and }p \text{ is compatible with }g^*\restrict X\}.
\end{displaymath}
If $q\in \q_{\kappa'}^Y$, then $q\union g^*\restrict X$ is compatible to some $p\in A$ and clearly $p\restrict Y$ is then in $A'$. Thus, $A'$ is maximal in $\q_{\kappa'}^Y$. Since $g\restrict \q_{\kappa'}$ is $N[G']$-generic for $\q_{\kappa'}$, it follows that there is $q\in g\cap A'$. Let $q=p\restrict Y$, where $p\in A$ and $p$ is compatible with $g^*\restrict X$. Since $q$ and $p$ are compatible and $A$ is an antichain, it follows that $p\in g$. But then $p=p^*$ is in $g^*$. Thus, $A\cap g^*\neq \emptyset$. 

We can now conclude that $V[G][g]$ has a $(\kappa,\lambda)$-remarkable pair of embeddings
\begin{displaymath}
\pi:M[\overline G][\overline g]\to H_\lambda[G][g]
\end{displaymath}
and
\begin{displaymath}
\sigma:M[\overline G][\overline g]\to N[G'][g^*],
\end{displaymath}
which contradicts that $q\in G*g$ forces that no such embeddings exist.

\end{proof}
\subsection{Indestructibility by all $\ltkappa$-closed $\lesseq\kappa$-distributive forcing}
Gitik and Shelah showed that strong cardinals can be made indestructible by all weakly $\lesseq\kappa$-closed forcing with the Prikry property, a class which, in particular, includes all $\lesseq\kappa$-closed forcing. Here, we prove a result along similar lines for remarkable cardinals.
\begin{theorem}\label{th:distributiveIndestructible}
If $\kappa$ is remarkable, then there is a forcing extension in which the remarkability of $\kappa$ becomes indestructible by all $\ltkappa$-closed $\lesseq\kappa$-distributive forcing.
\end{theorem}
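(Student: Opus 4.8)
\emph{Plan.} The proof will follow the template of Theorem~\ref{th:addCohenSubsetsIndestructible} for $\Add(\kappa,\theta)$, with Woodin's surgery replaced by a master condition argument that $\lt\kappa$-closure makes available. Fix a remarkable Laver function $\ell_W$ and let $\p_\kappa$ be the $\kappa$-length Easton support iteration which, at a stage $\xi\in\text{dom}(\ell_W)$ for which $\ell_W(\xi)=\la\dot{\mathbb Q},x\ra$ with $\dot{\mathbb Q}$ a $\p_\xi$-name that $\p_\xi$ forces to be $\lt\xi$-closed and $\lesseq\xi$-distributive, forces with $\dot{\mathbb Q}$, and forces trivially at every other stage. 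Since $\ell_W(\xi)\in V_\xi$ for $\xi\in\text{dom}(\ell_W)$, the poset named by such a $\dot{\mathbb Q}$ has size below $\xi$, so $\p_\kappa$ is a legitimate Easton iteration and each $\p_\gamma$ with $\gamma\in\text{dom}(\ell_W)$ has size $\gamma$. I claim $\kappa$ retains its remarkability after any forcing of the form $\p_\kappa*\dot{\mathbb R}$ with $\dot{\mathbb R}$ forced to be $\ltkappa$-closed and $\lesseq\kappa$-distributive; since $\dot{\mathbb R}$ may be trivial this covers the $\p_\kappa$-extension itself, and since any $\ltkappa$-closed $\lesseq\kappa$-distributive poset in the $\p_\kappa$-extension has a name of the required kind (work below a condition forcing the closure and distributivity), this suffices. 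That $\kappa$ stays remarkable in a $\p_\kappa$-extension $V[G]$ is checked exactly as in the first part of Theorem~\ref{th:addCohenSubsetsIndestructible}: in a $\Coll(\omega,\ltkappa)$-extension $V[G][H]$ use Lemma~\ref{le:LaverFunctionGap}, choosing the anticipated value $\ell_W(\gamma)$ so that it does not code an appropriate name, to obtain a $(\gamma,\overline\lambda,\kappa,\lambda)$-remarkable $j$ with $(\gamma,\overline\lambda]\cap\text{dom}(\ell_W)=\emptyset$ and $j(\ell_W\restrict\gamma)=\ell_W$, lift $j$ through $\p_\gamma$ by the lifting criterion, and observe $H_{\overline\lambda}[G_\gamma]=H_{\overline\lambda}^{V[G]}$ while $\overline\lambda$ stays $V[G]$-regular because $\p_\gamma$ is small relative to it.

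Next, fix an $\ltkappa$-closed $\lesseq\kappa$-distributive poset in the $\p_\kappa$-extension, with $\p_\kappa$-name $\dot{\mathbb R}$, and suppose towards a contradiction that some $q\in\p_\kappa*\dot{\mathbb R}$ forces that $\kappa$ is not remarkable; by Theorem~\ref{th:equivalentRemarkable} this means $q$ forces, for some regular $\lambda>\kappa$ which we take much larger than the forced size of the target poset, that there is no $(\kappa,\lambda)$-remarkable pair. As in Theorem~\ref{th:addCohenSubsetsIndestructible}, set $\delta=(2^\lambda)^+$, take a countable $Y\prec H_\delta$ containing $q,\kappa,\lambda,W,\dot{\mathbb R}$, and let $\rho:M'\to H_\delta$ be the inverse of the Mostowski collapse; write $\overline\p_{\kappa'}=\rho^{-1}(\p_\kappa)$, $\dot{\mathbb R}'=\rho^{-1}(\dot{\mathbb R})$, $\ell'_{W'}=\rho^{-1}(\ell_W)$. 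In a $\Coll(\omega,\lt\kappa')^{M'}$-extension of $M'$, apply the remarkable Laver property inside $M'$ together with Lemma~\ref{le:LaverFunctionGap} to obtain a $(\overline\kappa,\alpha,\kappa',\lambda')$-remarkable embedding $\overline\sigma:H_\alpha^{M'}\to H_{\lambda'}^{M'}$ with $\overline\sigma(\ell'_{W'}\restrict\overline\kappa)=\ell'_{W'}$, with $\overline\kappa\in\text{dom}(\ell'_{W'})$ and $(\overline\kappa,\alpha]\cap\text{dom}(\ell'_{W'})=\emptyset$, and, via clauses (4)--(5) of Lemma~\ref{le:LaverFunctionGap}, with $\dot{\mathbb R}'\in\text{ran}(\overline\sigma)$ and $\ell'_{W'}(\overline\kappa)=\la\overline{\dot{\mathbb R}'},x\ra$ where $\overline\sigma(\overline{\dot{\mathbb R}'})=\dot{\mathbb R}'$. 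Then $\overline\sigma(\overline\p_{\overline\kappa})=\overline\p_{\kappa'}$, the iteration $\overline\p_{\kappa'}$ forces at stage $\overline\kappa$ with (the $M$-interpretation of) $\overline{\dot{\mathbb R}'}$ and has no nontrivial stage in $(\overline\kappa,\alpha]$. Let $\sigma:M:=H_\alpha^{M'}\to N$ be the remarkable extender embedding obtained from $\overline\sigma$ (Proposition~\ref{prop:remarkableExtender}), so $\sigma$ agrees with $\overline\sigma$ on $V_{\overline\kappa+1}^{M'}$ and $M=H_\alpha^N$, and let $\pi=\rho\circ\overline\sigma:M\to H_\lambda$; then $\pi$ and $\sigma$ form a $(\kappa,\lambda)$-remarkable pair, $q\in\text{ran}(\pi)$, and $\pi^{-1}(\p_\kappa*\dot{\mathbb R})=\overline\p_{\overline\kappa}*\overline{\dot{\mathbb R}'}$.

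Now lift both embeddings. Since $\p_\kappa*\dot{\mathbb R}$ is countably closed, fix a $V$-generic $G*R$ for it that is generic for the countable model $\pi\image M$ and contains $q$; its $\pi$-preimage $\overline G*\overline R$ is $M$-generic for $\overline\p_{\overline\kappa}*\overline{\dot{\mathbb R}'}$, and the lifting criterion lifts $\pi$ to $\pi:M[\overline G][\overline R]\to H_\lambda[G][R]$ in $V[G][R]$. For $\sigma$: it fixes $\overline\p_{\overline\kappa}$ pointwise with $\sigma(\overline\p_{\overline\kappa})=\overline\p_{\kappa'}$; moreover, writing $\overline{\mathbb Q}=(\overline{\dot{\mathbb R}'})^{M[\overline G]}$ and noting that $\overline{\mathbb Q}$ together with all of its dense subsets of $N[\overline G]$ lies in $H_\alpha^{N[\overline G]}=M[\overline G]$, the filter $\overline R$ is also $N[\overline G]$-generic for $\overline{\mathbb Q}$, so we may choose an $N$-generic $G'$ for $\overline\p_{\kappa'}$ whose part below $\overline\kappa$ is $\overline G$ and whose part at stage $\overline\kappa$ is $\overline R$; the lifting criterion then lifts $\sigma$ to $\sigma:M[\overline G]\to N[G']$. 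It remains to lift $\sigma$ past $\overline{\mathbb Q}$, and here the argument diverges from the $\Add(\kappa,\theta)$ case: instead of surgically altering a generic, we argue that, $\overline{\mathbb Q}$ being $\lt\overline\kappa$-closed and $\overline R$ being generic over the countable model $M[\overline G]$, the downward-directed family $\sigma\image\overline R$ has a lower bound $r^*$ in $\sigma(\overline{\mathbb Q})$; as $\sigma(\overline{\mathbb Q})\in N[G']$ we get $r^*\in N[G']$, and choosing an $N[G']$-generic $g^*$ for $\sigma(\overline{\mathbb Q})$ below $r^*$ yields $\sigma\image\overline R\of g^*$, whence the lifting criterion lifts $\sigma$ to $\sigma:M[\overline G][\overline R]\to N[G'][g^*]$. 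Finally, $\lesseq\kappa$-distributivity is precisely what keeps the lifted pair a $(\kappa,\lambda)$-remarkable pair: $\sigma(\overline{\mathbb Q})$ and the tail of $\overline\p_{\kappa'}$ above stage $\overline\kappa$ are $\leq\alpha$-closed in $N[G']$ and $\overline\p_{\overline\kappa}*\overline{\dot{\mathbb R}'}$ has size below $\alpha$, so $H_\alpha^{N[G'][g^*]}=M[\overline G][\overline R]$, the ordinal $\alpha=\ORD^{M[\overline G][\overline R]}$ stays regular in $N[G'][g^*]$, and $\sigma(\overline\kappa)=\kappa'>\alpha$. Thus $V[G][R]$ has a $(\kappa,\lambda)$-remarkable pair, contradicting $q$.

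The step I expect to be the main obstacle is the master condition claim: producing the lower bound $r^*$ of $\sigma\image\overline R$ \emph{inside} the small model $N[G']$. The difficulty is that the reflected name $\overline{\dot{\mathbb R}'}$ inherits the large forced size of $\dot{\mathbb R}$, so $\overline{\mathbb Q}$ has size above $\overline\kappa$ in $M[\overline G]$; consequently $\sigma\restrict\overline{\mathbb Q}$ need not lie in $N[G']$ and the directed family $\sigma\image\overline R$ is not visibly an element of $N[G']$, so one cannot simply invoke $\lt\kappa'$-closure of $\sigma(\overline{\mathbb Q})$ there. Handling this will require exploiting the remarkable extender structure of $\sigma$, representing the relevant data in the form $\sigma(f)(a)$ much as in the surgery argument of Theorem~\ref{th:addCohenSubsetsIndestructible}, together with $\lt\overline\kappa$-closure, to see that $N[G']$ itself can compute a suitable lower bound.
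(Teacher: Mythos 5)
Your overall architecture is the same as the paper's (the Laver-prepared Easton iteration, the countable hull $\rho:M'\to H_\delta$, applying Lemma~\ref{le:LaverFunctionGap} inside $M'$, passing to the remarkable extender embedding $\sigma$, lifting $\pi$ via countable closure of the iteration, and lifting $\sigma$ through $\overline\p_{\kappa'}$). But the step you flag as the main obstacle is a genuine gap, and the remedy you propose --- a master condition $r^*\in\sigma(\overline{\mathbb Q})$ lying below $\sigma\image\overline R$ --- is not merely unjustified, it is in general false. Since $\text{cp}(\sigma)=\overline\kappa$ while $\overline{\mathbb Q}$ typically has size well above $\overline\kappa$ in $M[\overline G]$, Proposition~\ref{prop:cpSizeImage} gives no reason for $\sigma\image\overline R$ to be an element of $N[G']$, and the $\lt{\kappa'}$-closure of $\sigma(\overline{\mathbb Q})$ is internal to $N[G']$, so it only applies to descending sequences that are members of $N[G']$, never to the external family $\sigma\image\overline R$. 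Moreover any lower bound would itself be a condition, hence an element of $N[G']$, and such an element need not exist at all: take $\overline{\mathbb Q}=\Add(\overline\kappa^+,1)^{M[\overline G]}$ (a $\lesseq{\overline\kappa}$-closed poset which the Laver function can certainly name). Then a lower bound $r^*$ of $\sigma\image\overline R$ would have domain containing $\sigma\image(\overline\kappa^+)^{M[\overline G]}$, which is cofinal in $\delta=\sigma((\overline\kappa^+)^{M[\overline G]})$, because every ordinal below $\delta$ has the form $\sigma(f)(a)$ with $f\in M[\overline G]$ a function on $(V_{\overline\kappa}^M\union\{\overline\kappa\})^{\lt\omega}$ into $(\overline\kappa^+)^{M[\overline G]}$, hence bounded there by regularity; but a single condition of $\sigma(\overline{\mathbb Q})$ has domain of $N[G']$-size less than $\delta$, hence bounded below $\delta$, since $N[G']$ thinks $\delta$ is regular. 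This is exactly the obstruction that rules out master-condition arguments for extender-type embeddings, and representing data in the form $\sigma(f)(a)$ will not conjure the bound.

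The paper's proof avoids master conditions entirely: one shows that the filter $g'$ \emph{generated} by $\sigma\image\overline R$ is already fully $N[G']$-generic for $\q_{\kappa'}=\sigma(\overline{\mathbb Q})$, and this is where the $\lesseq\kappa$-distributivity hypothesis is actually used (not, as you suggest at the end, to preserve the $H_\alpha$-computations, which follow from the closure of $\q_{\kappa'}$ and of the tail of $\overline\p_{\kappa'}$). Using $N[G']=\{\sigma(f)(a)\mid a\in S^{\lt\omega},\ f\in M[\overline G]\}$ with $S=V_{\kappa'}^N\union\{\kappa'\}$ of size $\kappa'$ in $N[G']$, any dense open $D\in N[G']$ equals $\sigma(f)(a)$ for some such $f$; the intersection $\overline D$ of all dense open sets of the form $\sigma(f)(b)$, $b\in S^{\lt\omega}$, is dense by $\lesseq{\kappa'}$-distributivity of $\q_{\kappa'}$ in $N[G']$, and $\overline D=\sigma(E)$, where $E$ is the intersection of the dense open sets among the $f(b)$ for $b\in (V_{\overline\kappa}^M\union\{\overline\kappa\})^{\lt\omega}$, which is dense in $M[\overline G]$ by $\lesseq{\overline\kappa}$-distributivity of $\overline{\mathbb Q}$ there. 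Hence $\overline R$ meets $E$, so $\sigma\image\overline R$ meets $\overline D\of D$, and the lifting criterion applies to $g'$. With this replacement of your master-condition step, the rest of your outline goes through as in the paper.
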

\begin{proof}
Let's fix a remarkable Laver function $\ell_W$. Let $\p_\kappa$ be the $\kappa$-length Easton support iteration which, at stage $\xi$, forces with $\dot\q_\xi$ whenever $\ell_W(\xi)=\la \dot\q_\xi,x\ra$ for some set $x$, such that $\dot\q_\xi$ is a $\p_\xi$-name for a $\lt\xi$-closed $\lesseq\xi$-distributive poset in $V^{\p_\xi}$. It is easy to see that $\p_\kappa\subseteq V_\kappa$ and $\p_\kappa$ preserves all inaccessible cardinals. We will argue that $\kappa$ has the desired indestructibility in any forcing extension by $\p_\kappa$. Note that it suffices to argue that $\kappa$ remains remarkable in any forcing extension by $\p_\kappa*\dot \q$, where $\dot \q$ is a $\p_\kappa$-name for a $\ltkappa$-closed $\leq\kappa$-distributive poset in $V^{\p_\kappa}$ (because $\dot \q$ can name a trivial poset).

Fix a $\p_\kappa$-name $\dot \q$ for a $\ltkappa$-closed $\lesseq\kappa$-distributive poset in $V^{\p_\kappa}$ and suppose towards a contradiction that $\kappa$ is no longer remarkable in some forcing extension by $\p_\kappa*\dot \q$. Then there is a regular $\lambda>\kappa$ and a condition $q\in \p_\kappa*\dot\q$ forcing that there is no $(\kappa,\lambda)$-remarkable pair of embeddings $\pi$ and $\sigma$. We can assume without loss of generality that $\lambda$ is much larger than the cardinality of the transitive closure of $\dot\q$.

Let $\delta=(2^\lambda)^+$. Let $Y$ be a countable elementary substructure of $H_\delta$ containing $q$, $\kappa$, $\lambda$ the well-order $W$, and $\dot \q$. Let $M'$ be the Mostowski collapse of $Y$. The inverse of the collapsing map is an embedding
\begin{displaymath}
\rho:M'\to H_\delta
\end{displaymath}
with $\rho(q')=q$, $\rho(\kappa')=\kappa$, $\rho(\lambda')=\lambda$, $\rho(\ell'_{W'})=\ell_W$, and $\rho(\dot\q')=\dot \q$. In $M'$, we can define the Easton support $\kappa'$-length iteration  $\overline \p_{\kappa'}$ which, at stage $\xi$, forces with $\dot\q_\xi$ whenever $\ell'_{W'}(\xi)=\la \dot\q_\xi,x\ra$ for some set $x$, such that $\dot\q_\xi$ is a $\overline \p_\xi$-name for a $\lt\xi$-closed $\leq\xi$-distributive poset in $(M')^{\overline \p_\xi}$. Clearly $\rho(\overline\p_{\kappa'})=\p_\kappa$. Since $M'$ is countable, we can choose some $M'$-generic $h$ for $\Coll(\omega,\lt\kappa')^{M'}$. By elementarity (and Lemma~\ref{le:LaverFunctionGap}), $M'[h]$ has a $(\overline\kappa,\alpha,\kappa',\lambda')$-remarkable embedding
\begin{displaymath}
\overline\sigma:H_\alpha^{M'}\to H_{\lambda'}^{M'}
\end{displaymath}
with $\overline\sigma(\overline q)=q'$, $\overline\sigma(\overline \ell)=\ell'_{W'}$, and $\overline\sigma(\dot\q_{\overline\kappa})=\dot\q'$ such that
\begin{enumerate}
\item $\text{dom}(\ell'_{W'})\cap (\overline\kappa,\alpha]=\emptyset$,
\item $\ell'_{W'}(\overline\kappa)=\la \dot\q_{\overline\kappa},x\ra$ for some set $x$,
\item $\overline \ell=\ell'_{W'}\restrict\overline\kappa$.
\end{enumerate}
In particular, we have $\overline\sigma(\overline\p_{\overline\kappa})=\overline\p_{\kappa'}$ and $\overline\p_{\kappa'}$ forces with $\dot \q_{\overline\kappa}$ at stage $\overline\kappa$. Let
\begin{displaymath}
\sigma:H^{M'}_\alpha\to N
\end{displaymath}
be the remarkable extender embedding obtained from $\overline\sigma$. Note that $H^{M'}_\alpha\subseteq N$ and $\overline\sigma$ and $\sigma$ agree on $V^{M'}_{\overline\kappa+1}$. Thus, for instance, it continues to be the case that $\sigma(\overline \ell)=\ell'_{W'}$ and $\sigma(\overline\p_{\overline\kappa})=\overline\p_{\kappa'}$. Let $M=H_\alpha^{M'}$ and
\begin{displaymath}
\pi:M\to H_\lambda,
\end{displaymath}
where $\pi=\rho\circ\overline\sigma$. Note that $\pi(\overline\kappa)=\kappa$ and $\pi(\overline q)=q$. Clearly $\pi$ and $\sigma$ is a $(\kappa,\lambda)$-remarkable pair of embeddings.

Since $\p_\kappa*\dot\q$ is, in particular, countably closed, we can find a $V$-generic filter $G*g\subseteq \p_\kappa*\dot\q$ that is $X$-generic for $X=\pi\image M$ with $q\in G*g$. Let $\overline G*\overline g$ be the pre-image of $X\cap G*g$ under $\pi$. Since $G*g$ is $X$-generic, it follows that $\overline G*\overline g$ is $M$-generic for $\overline\p_{\overline\kappa}*\dot\q_{\overline\kappa}$. Thus, the lifting criterion is satisfied by construction, and so $\pi$ lifts to
\begin{displaymath}
\pi:M[\overline G][\overline g]\to H_\lambda[G][g]
\end{displaymath}
in $V[G][g]$. It remains to argue that we can lift $\sigma$ to $M[\overline G][\overline g]$ so that $\pi$ and $\sigma$ continue to constitute a $(\kappa,\lambda)$-remarkable pair.

First, we lift $\sigma$ to $M[\overline G]$. As we noted earlier, $\sigma(\overline\p_{\overline\kappa})=\overline\p_{\kappa'}$ and $\overline \p_{\kappa'}$ forces with $\dot \q_{\overline\kappa}$ at stage $\overline\kappa$. It follows that $\overline\p_{\kappa'}$ has $\overline\p_{\overline\kappa}*\dot\q_{\overline\kappa}$ as an initial segment. Since $N$ is countable, we can choose some $N$-generic $G'$ for $\overline\p_{\kappa'}$ extending $\overline G*\overline g$ and, use the lifting criterion to lift $\sigma$ to
\begin{displaymath}
\sigma:M[\overline G]\to  N[G']
\end{displaymath}
in $V[G][g]$.

Next, we lift $\sigma$ to $M[\overline G][\overline g]$. Let $\dot\q_{\kappa'}=\sigma(\dot\q_{\overline\kappa})$ and $\q_{\kappa'}=(\dot\q_{\kappa'})_{G'}$. Let $g'=\la\sigma\image\overline g\ra$ be the filter generated by $\sigma\image \overline g$. The filter $g'$ is obviously $\sigma\image M[\overline G]$-generic. But we will argue that it is actually fully $N[G']$-generic. It suffices to show that every dense open subset of $\q_{\kappa'}$ has a dense subset in $\sigma\image M[\overline G]$. So fix a dense open subset $D$ of $\q_{\kappa'}$ in $N[G']$. Recall that $\sigma:M\to N$ is a remarkable extender embedding with
\begin{displaymath}
N=\{\sigma(f)(a)\mid a\in S^{\lt\omega},\,f\in M\},
\end{displaymath}
where $S=V_{\kappa'}^N\union\{\kappa'\}$, and so
\begin{displaymath}
N[G']=\{\sigma(f)(a)\mid a\in S^{\lt\omega},\,f\in M[\overline G]\}.
\end{displaymath}
Thus, $D=\sigma(f)(a)$ for some $f\in M[\overline G]$ and $a\in S^{\lt\omega}$. Let $\overline D$ be the intersection of all $\sigma(f)(b)$ for $b\in S^{\lt\omega}$ that are dense open in $\q_{\kappa'}$. Clearly $\overline D\subseteq D$, and $\overline D$ is an element of $\sigma\image M[\overline G]$ because it is definable from $\sigma(f)$. Also, $\overline D$ is dense in $\q_{\kappa'}$ because, in $N[G']$, $S$ has size $\kappa'$ and $\q_{\kappa'}$ is $\lesseq\kappa'$-distributive. Thus, $g'$ meets $\overline D$, and hence $D$. By the lifting criterion, we can now lift $\sigma$ to
\begin{displaymath}
\sigma:M[\overline G][\overline g]\to N[G'][g'].
\end{displaymath}
Thus, $V[G][g]$ has a $(\kappa,\lambda)$-remarkable pair of embeddings
\begin{displaymath}
\pi:M[\overline G][\overline g]\to H_\lambda[G][g]
\end{displaymath}
and
\begin{displaymath}
\sigma:M[\overline G][\overline g]\to N[G'][g'],
\end{displaymath}
which contradicts that $q\in G*g$ forces that no such embeddings exist.
\end{proof}
\subsection{Main Theorem}
Combining the proofs of Theorem~\ref{th:addCohenSubsetsIndestructible} and Theorem~\ref{th:distributiveIndestructible}, we easily obtain the main theorem.
\begin{theorem}\label{th:main}
If $\kappa$ is remarkable, then there is a forcing extension in which the remarkability of $\kappa$ becomes indestructible by all $\ltkappa$-closed $\lesseq\kappa$-distributive forcing and all two-step iterations $\Add(\kappa,\theta)*\dot{\mathbb R}$, where $\dot{\mathbb R}$ is forced to be $\ltkappa$-closed and $\leq\kappa$-distributive.
\end{theorem}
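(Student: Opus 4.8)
The plan is to combine, essentially verbatim, the preparation forcing and the lifting strategies of Theorems~\ref{th:addCohenSubsetsIndestructible} and~\ref{th:distributiveIndestructible}. Fix a remarkable Laver function $\ell_W$, and let $\p_\kappa$ be the $\kappa$-length Easton support iteration which at stage $\xi$ forces with $\Add(\xi,\mu)^{V^{\p_\xi}}*\dot{\mathbb R}_\xi$ whenever $\ell_W(\xi)=\la\mu,\dot{\mathbb R}_\xi,x\ra$ for some set $x$, where $\mu$ and $\xi$ are cardinals in $V^{\p_\xi}$ and $\dot{\mathbb R}_\xi$ is a $\p_\xi*\Add(\xi,\mu)$-name for a $\lt\xi$-closed $\lesseq\xi$-distributive poset; allowing $\mu=0$ and $\dot{\mathbb R}_\xi$ trivial, this single iteration prepares simultaneously for all posets of both advertised kinds. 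As before, $\p_\kappa\of V_\kappa$, $\p_\kappa$ preserves inaccessible cardinals, and it suffices to show that $\kappa$ stays remarkable after $\p_\kappa$ and after $\p_\kappa*(\Add(\kappa,\theta)*\dot{\mathbb R})$ for every cardinal $\theta$ and every $\p_\kappa*\Add(\kappa,\theta)$-name $\dot{\mathbb R}$ for a $\ltkappa$-closed $\lesseq\kappa$-distributive poset (with $\dot{\mathbb R}$ trivial this yields indestructibility by $\Add(\kappa,\theta)$, and with $\theta=0$ indestructibility by all $\ltkappa$-closed $\lesseq\kappa$-distributive forcing). That $\kappa$ stays remarkable in $V[G]$ for $G\of\p_\kappa$ is exactly as in Theorem~\ref{th:addCohenSubsetsIndestructible}: by Lemma~\ref{le:LaverFunctionGap} fix a $(\gamma,\overline\lambda,\kappa,\lambda)$-remarkable $j$ with $(\gamma,\overline\lambda]\cap\text{dom}(\ell_W)=\emptyset$, $j(\ell_W\restrict\gamma)=\ell_W$, and $\ell_W(\gamma)$ not of the triggering form, so that $j(\p_\gamma)=\p_\kappa$, there is no forcing on $[\gamma,\overline\lambda]$, and the lift $j:H_{\overline\lambda}[G_\gamma]\to H_\lambda[G]$ witnesses remarkability in $V[G]$.

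For the two-step extension I would run the arguments of Theorems~\ref{th:addCohenSubsetsIndestructible} and~\ref{th:distributiveIndestructible} in series, using the characterization from Theorem~\ref{th:equivalentRemarkable}. Suppose towards a contradiction that some $q\in\p_\kappa*(\Add(\kappa,\theta)*\dot{\mathbb R})$ forces that there is no $(\kappa,\lambda)$-remarkable pair, with $\lambda$ much larger than $\theta$ and the hereditary cardinality of $\dot{\mathbb R}$. Collapse a countable $Y\prec H_{(2^\lambda)^+}$ containing $q,\kappa,\theta,\lambda,W,\dot{\mathbb R}$ to get $\rho:M'\to H_\delta$, reconstruct the analogous iteration $\overline\p_{\kappa'}$ in $M'$, pick an $M'$-generic $h$ for $\Coll(\omega,\lt\kappa')^{M'}$, and use elementarity together with Lemma~\ref{le:LaverFunctionGap} to obtain in $M'[h]$ a $(\overline\kappa,\alpha,\kappa',\lambda')$-remarkable $\overline\sigma:H_\alpha^{M'}\to H_{\lambda'}^{M'}$ with $\text{dom}(\ell'_{W'})\cap(\overline\kappa,\alpha]=\emptyset$, with $\overline\sigma(\ell'_{W'}\restrict\overline\kappa)=\ell'_{W'}$, and with $\ell'_{W'}(\overline\kappa)=\la\overline\theta,\dot{\mathbb R}_{\overline\kappa},x\ra$ arranged so that $\overline\sigma(\overline\p_{\overline\kappa})=\overline\p_{\kappa'}$ forces at stage $\overline\kappa$ with $\Add(\overline\kappa,\overline\theta)^{(M')^{\overline\p_{\overline\kappa}}}*\dot{\mathbb R}_{\overline\kappa}$, and $\overline\sigma$ sends $\overline\theta$ and $\dot{\mathbb R}_{\overline\kappa}$ to the $M'$-copies of $\theta$ and $\dot{\mathbb R}$. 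Pass to the remarkable extender embedding $\sigma:M\to N$ obtained from $\overline\sigma$ (Proposition~\ref{prop:remarkableExtender}), where $M=H_\alpha^{M'}$, so $M\of N$, $S:=V_{\kappa'}^N\union\{\kappa'\}=\sigma(V_{\overline\kappa}^M\union\{\overline\kappa\})$, and $\sigma,\overline\sigma$ agree on $V_{\overline\kappa+1}^{M'}$; then $\pi:=\rho\circ\overline\sigma:M\to H_\lambda$ and $\sigma$ form a $(\kappa,\lambda)$-remarkable pair with $\pi(\overline\p_{\overline\kappa}*\Add(\overline\kappa,\overline\theta)*\dot{\mathbb R}_{\overline\kappa})=\p_\kappa*\Add(\kappa,\theta)*\dot{\mathbb R}$ and $\pi(\overline q)=q$. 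Using the countable closure of $\p_\kappa*\Add(\kappa,\theta)*\dot{\mathbb R}$, choose a $V$-generic $G*g*r$ that is $X$-generic for $X=\pi\image M$ with $q\in G*g*r$; letting $\overline G*\overline g*\overline r$ be its pullback under $\pi$, the lifting criterion lifts $\pi$ to $\pi:M[\overline G][\overline g][\overline r]\to H_\lambda[G][g][r]$.

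It then remains to lift $\sigma$ in three stages. First, over $\overline\p_{\overline\kappa}$: since $N$ is countable and $\overline\p_{\overline\kappa}*\Add(\overline\kappa,\overline\theta)*\dot{\mathbb R}_{\overline\kappa}$ is an initial segment of $\sigma(\overline\p_{\overline\kappa})=\overline\p_{\kappa'}$, extend $\overline G*\overline g*\overline r$ to an $N$-generic $G'$ for $\overline\p_{\kappa'}$ and lift to $\sigma:M[\overline G]\to N[G']$. Second, over $\Add(\overline\kappa,\overline\theta)$: perform Woodin's surgery exactly as in Theorem~\ref{th:addCohenSubsetsIndestructible}, taking an arbitrary $N[G']$-generic $g'$ for $\q_{\kappa'}=\Add(\kappa',\sigma(\overline\theta))^{N[G']}$, surgically altering it to $g^*$ agreeing with $\Union\sigma\image\overline g$, and verifying via Proposition~\ref{prop:cpSizeImage} applied to the relevant $\overline\kappa$-sized unions of supports that $g^*$ is $N[G']$-generic and contains $\sigma\image\overline g$; this lifts $\sigma$ to $\sigma:M[\overline G][\overline g]\to N[G'][g^*]$ with $\sigma(\overline g)=g^*$. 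Third, over $\dot{\mathbb R}_{\overline\kappa}$: apply the distributivity argument of Theorem~\ref{th:distributiveIndestructible}. Writing $\overline{\mathbb R}$ for the realization of $\dot{\mathbb R}_{\overline\kappa}$ in $M[\overline G][\overline g]$, set $r'=\la\sigma\image\overline r\ra$; it is $\sigma\image M[\overline G][\overline g]$-generic for $\sigma(\overline{\mathbb R})$, and to see it is fully $N[G'][g^*]$-generic, fix a dense open $D\of\sigma(\overline{\mathbb R})$ in $N[G'][g^*]$, write $D=\sigma(f)(a)$ with $a\in S^{\lt\omega}$ and $f\in M[\overline G][\overline g]$ (using that $N[G'][g^*]=\{\sigma(f)(a)\mid a\in S^{\lt\omega},\,f\in M[\overline G][\overline g]\}$), and let $\overline D$ be the intersection of all $\sigma(f)(b)$, $b\in S^{\lt\omega}$, that are dense open in $\sigma(\overline{\mathbb R})$; then $\overline D\of D$, $\overline D\in\sigma\image M[\overline G][\overline g]$ since $S$, $\sigma(\overline{\mathbb R})$ and $\sigma(f)$ all lie there, and $\overline D$ is dense because $\sigma(\overline{\mathbb R})$ is $\lesseq\kappa'$-distributive and $|S|=\kappa'$ in $N[G'][g^*]$. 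Hence $r'$ meets $D$, so $r'$ is $N[G'][g^*]$-generic and $\sigma$ lifts to $\sigma:M[\overline G][\overline g][\overline r]\to N[G'][g^*][r']$; moreover $\alpha$ stays a regular cardinal and $M[\overline G][\overline g][\overline r]=H_\alpha^{N[G'][g^*][r']}$ because $\overline\p_{\kappa'}$ has no forcing on $(\overline\kappa,\alpha]$ and $\overline G*\overline g*\overline r$ is an initial segment of $G'$. Thus $V[G][g][r]$ has the $(\kappa,\lambda)$-remarkable pair $\pi,\sigma$, contradicting the choice of $q$.

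The one genuinely new point in fusing the two earlier proofs, and the step I expect to require the most care, is the claim used in the third stage above, that $N[G'][g^*]=\{\sigma(f)(a)\mid a\in S^{\lt\omega},\,f\in M[\overline G][\overline g]\}$ for the surgically altered generic $g^*$: one starts from the standard fact $N[G']=\{\sigma(f)(a)\mid a\in S^{\lt\omega},\,f\in M[\overline G]\}$, notes that the lifted $\sigma$ sends $\overline g$ to $g^*$ by the lifting criterion, and then evaluates names by $g^*$, using $\sigma(F)(a)=(\sigma(f)(a))_{g^*}$ with $F(b)=(f(b))_{\overline g}$. With this in hand the distributivity argument of Theorem~\ref{th:distributiveIndestructible} goes through with $N[G'][g^*]$ in place of $N[G']$, and the rest is a routine concatenation of the two earlier proofs.
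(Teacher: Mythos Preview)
Your proposal is correct and follows essentially the same approach as the paper: define a single Laver-style preparation iteration, then lift a $(\kappa,\lambda)$-remarkable pair by first applying Woodin's surgery for the $\Add$ part and then the distributivity argument for $\dot{\mathbb R}$. Your parametrization of the stages (always $\Add(\xi,\mu)*\dot{\mathbb R}_\xi$, allowing $\mu=0$ or $\dot{\mathbb R}_\xi$ trivial) is a cosmetic variant of the paper's two-case formulation, and you have correctly isolated and justified the one point the paper's sketch leaves implicit, namely the extender-style representation $N[G'][g^*]=\{\sigma(f)(a)\mid a\in S^{\lt\omega},\,f\in M[\overline G][\overline g]\}$ for the surgically altered generic.
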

\begin{proof}
We give a sketch of the proof using the notation of the proofs of Theorem~\ref{th:addCohenSubsetsIndestructible} and Theorem~\ref{th:distributiveIndestructible}. Let's fix a remarkable Laver function $\ell_W$. Let $\p_\kappa$ be the $\kappa$-length Easton-support iteration, which at stage $\xi$, forces with $\dot \q_\xi$ whenever $\ell_W(\xi)=\la \dot\q_\xi,x\ra$ for some set $x$, such that $\dot\q_\xi$ is a $\p_\xi$-name for either a $\lt\xi$-closed $\lesseq\xi$-distributive forcing or for a forcing of the form $\Add(\xi,\mu)*\dot {\mathbb R}$, where $\dot{\mathbb R}$ is forced to be $\lt\xi$-closed and $\lesseq\xi$-distributive. We will argue that $\kappa$ has the desired indestructibility in any forcing extension by $\p_\kappa$. When we need to lift, we have $\overline\sigma(\ell'_{W'})(\kappa')=\la\dot\q',x\ra$ (where $\dot \q'$ is a $\overline{\p}_{\kappa'}$-name for a poset in one of our two classes), and then by elementarity, $\ell'_{W'}(\overline\kappa)=\la\dot\q_{\overline\kappa},x\ra$, where $\dot\q_{\overline\kappa}$ is in the same class as $\dot\q'$. If $\dot\q'$ is a name for a $\ltkappa'$-closed $\lesseq\kappa'$-distributive forcing, then the argument uses the proof of Theorem~\ref{th:distributiveIndestructible}. If $\dot\q_{\kappa'}$ is a name for a poset of the form $\Add(\kappa',\mu)*\dot {\mathbb R}$, then the lifting argument is a combination of the techniques in Theorem~\ref{th:addCohenSubsetsIndestructible} and Theorem~\ref{th:distributiveIndestructible}: for the remarkable extender embedding $\sigma: M[\overline G]\to N[G']$, we can first lift it using the surgery technique to $\sigma: M[\overline G][\overline g]\to N[G'][g']$ as in the proof Theorem~\ref{th:addCohenSubsetsIndestructible} and then lift it further by the  argument of the proof of Theorem~\ref{th:distributiveIndestructible}.
\end{proof}
\subsection{Indestructibility by the canonical forcing of the $\GCH$}
Recall that the canonical forcing of the $\GCH$ is the $\ORD$-length Easton-support iteration $\p$ that forces with $\Add(\xi^+,1)^{V^{\p_\xi}}$ at stages $\xi$, whenever $\xi$ is a cardinal in $V^{\p_\xi}$.
\begin{theorem}
If $\kappa$ is remarkable, then its remarkability is preserved in any forcing extension by the canonical forcing of the $\GCH$.
\end{theorem}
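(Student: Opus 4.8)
The plan is to follow the template of the proof of Theorem~\ref{th:distributiveIndestructible}, with the canonical $\GCH$ forcing $\p$ itself playing the double role of the preparatory iteration and of a remarkable Laver function. The point that makes this work is that $\p$ is \emph{uniformly definable}: if $j$ is any elementary embedding between the relevant structures with $j(\overline\kappa)=\kappa$ (or $j(\overline\kappa)=\kappa'$, etc.), then $j$ automatically carries initial segments of $\p$ to initial segments of $\p$, with $j(\p_\xi)=\p_{j(\xi)}$ and $j(\Add(\xi^+,1))=\Add(j(\xi)^+,1)$; this is exactly the anticipation property a Laver function was used for. The new complication is that $\p$ is nontrivial at \emph{every} cardinal stage, so there is no gap in the iteration above the critical stage as in Theorems~\ref{th:addCohenSubsetsIndestructible} and~\ref{th:distributiveIndestructible}, and the tail of the iteration must be lifted directly; dealing with this will be the main point.

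First I would record the standard facts that $\p$ preserves cardinals, cofinalities, and $\GCH$, that the tail of $\p$ at stages $\geq\mu$ is $\leq\mu$-closed, and hence that $H_\lambda^{V[G]}=H_\lambda[G_{<\lambda}]$ for every regular $\lambda$, where $G_{<\lambda}$ is the restriction of $G$ to stages below $\lambda$. Then, supposing toward a contradiction that $\kappa$ is not remarkable in $V[G]$, Theorem~\ref{th:equivalentRemarkable} gives a regular $\lambda>\kappa$ (which we may take large) and a condition $q\in\p$ forcing that there is no $(\kappa,\lambda)$-remarkable pair. Since $\kappa$ is remarkable in $V$, I would fix in $V$ such a pair $\pi_0\colon M_0\to H_\lambda$, $\sigma_0\colon M_0\to N_0$ with $q\in\mathrm{ran}(\pi_0)$, and replace $\sigma_0$ by its remarkable extender embedding (Proposition~\ref{prop:remarkableExtender}), so that $N_0=\{\sigma_0(f)(a)\mid a\in S^{\lt\omega},\,f\in M_0\}$ with $S=V_{\kappa'}^{N_0}\cup\{\kappa'\}$, $\kappa'=\sigma_0(\overline\kappa)$, $M_0\of N_0$, $\overline\kappa=\mathrm{cp}(\sigma_0)$, $\pi_0(\overline\kappa)=\kappa$, $\alpha:=\ORD^{M_0}$ regular in $N_0$, and $M_0=H_\alpha^{N_0}$. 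Note that since $M_0$ embeds elementarily into $H_\lambda$ it has no largest cardinal, so $\alpha$ is in fact \emph{inaccessible} in $N_0$, and $\kappa'>\alpha$.

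The heart of the argument is lifting $\pi_0$ and $\sigma_0$ through $\p$. Writing $\overline\p$ for $M_0$'s copy of the canonical $\GCH$ forcing, uniform definability makes $\pi_0$ carry initial segments of $\overline\p$ to initial segments of $\p$; since $M_0$ is countable and $\p$ is countably closed, I would choose the $V$-generic $G\ni q$ so that $\overline G:=\pi_0^{-1}(G_{<\lambda}\cap\pi_0\image M_0)$ is $M_0$-generic, as in Theorem~\ref{th:distributiveIndestructible}, and lift $\pi_0$ to $\pi\colon M:=M_0[\overline G]\to H_\lambda[G_{<\lambda}]=H_\lambda^{V[G]}$ (so $\ORD^M=\alpha$). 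To lift $\sigma_0$, decompose $\overline\p=\overline\p_{<\overline\kappa}*\dot{\overline\q}_{\overline\kappa}*\overline\p_{(\overline\kappa,\alpha)}$ with $\overline\q_{\overline\kappa}=\Add(\overline\kappa^+,1)$ and $\overline G=\overline G_{<\overline\kappa}*\overline g*\overline G_{(\overline\kappa,\alpha)}$. Below $\overline\kappa$, $\sigma_0$ fixes conditions, so it lifts to $\sigma\colon M_0[\overline G_{<\overline\kappa}]\to N_0[G'_{<\kappa'}]$ for a suitable $N_0$-generic agreeing with $\overline G_{<\overline\kappa}$ below $\overline\kappa$, and the usual analysis of extender lifts preserves the extender representation of $N_0[G'_{<\kappa'}]$. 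The stage-$\overline\kappa$ forcing goes to $\Add(\kappa'^{+},1)^{N_0[G'_{<\kappa'}]}$; here $\bigcup\sigma_0\image\overline g$ need not lie in $N_0[G'_{<\kappa'}]$, so I would apply Woodin's surgery exactly as in Theorem~\ref{th:addCohenSubsetsIndestructible} to produce a generic $g^*\supseteq\sigma_0\image\overline g$, the surgical modification of each condition being governed by a set of size $\overline\kappa=\mathrm{cp}(\sigma_0)$ and hence lying in $N_0[G'_{<\kappa'}]$ by Proposition~\ref{prop:cpSizeImage}; this lifts $\sigma$ to $M_0[\overline G_{<\overline\kappa}][\overline g]\to N_0[G'_{<\kappa'}][g^*]$.

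The step without precedent is extending $\sigma$ over $\overline\p_{(\overline\kappa,\alpha)}$, and this is where I expect the difficulty to lie. Its stages lie below $\alpha$, but under $\sigma_0$ they map to stages of $N_0$'s canonical $\GCH$ forcing lying \emph{above} $\kappa'=\sigma_0(\overline\kappa)>\alpha$, i.e.\ the image falls entirely inside $\overline\p^{N_0}_{>\kappa'}$; and because $\kappa'>\ORD^{M_0}$, this is a proper-class iteration that is $\leq\kappa'^{+}$-closed, hence $\leq\kappa'$-distributive, while $|S|\leq\kappa'$. So the argument used to lift $\sigma$ in Theorem~\ref{th:distributiveIndestructible} applies verbatim: the filter generated by $\sigma_0\image\overline G_{(\overline\kappa,\alpha)}$ meets every dense open $D=\sigma_0(f)(a)$ of $\overline\p^{N_0}_{>\kappa'}$ because $\overline D=\bigcap\{\sigma_0(f)(b)\mid b\in S^{\lt\omega},\ \sigma_0(f)(b)\text{ dense open}\}$ is dense (by $\leq\kappa'$-distributivity), lies in $\sigma_0\image M_0[\overline G_{\leq\overline\kappa}]$, and is thus the $\sigma_0$-image of a dense subset of $\overline\p_{(\overline\kappa,\alpha)}$ met by $\overline G_{(\overline\kappa,\alpha)}$. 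This yields $\sigma\colon M=M_0[\overline G]\to N:=N_0[G']$ for an appropriate $N_0$-generic $G'$ (with its stages $<\alpha$ chosen to interpret over $M_0$ as $\overline G$, which is legitimate because $\overline\p^{N_0}_{<\alpha}$ is $\alpha$-c.c.\ in $N_0$, $\alpha$ being inaccessible there). Finally one checks the conditions of Theorem~\ref{th:equivalentRemarkable}: $\mathrm{cp}(\sigma)=\overline\kappa$, $\pi(\overline\kappa)=\kappa$, and $\sigma(\overline\kappa)=\kappa'>\alpha=\ORD^M$ are immediate, while $\alpha$ stays a regular cardinal in $N$ and $M=H_\alpha^N$ because the tail of $\overline\p^{N_0}$ at stages $\geq\alpha$ is $\leq\alpha$-closed and $\overline\p^{N_0}_{<\alpha}$ is $\alpha$-c.c.\ in $N_0$. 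So $\pi$ and $\sigma$ form a $(\kappa,\lambda)$-remarkable pair in $V[G]$ with $q\in G$, contradicting the choice of $q$; hence $\kappa$ is remarkable in $V[G]$. The one subsidiary point worth flagging is the automatic inaccessibility of $\ORD^{M_0}$ in $N_0$, which is exactly what guarantees that the construction genuinely delivers $N$ with $M=H_{\ORD^M}^N$.
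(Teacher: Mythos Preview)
Your overall strategy is sound, but there is one genuine misstep and one unnecessary complication compared to the paper's argument.

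\textbf{The misstep: surgery at stage $\overline\kappa$.} The stage-$\overline\kappa$ forcing is $\Add(\overline\kappa^+,1)$, whose image is $\Add(\kappa'^{+},1)^{N_0[G'_{<\kappa'}]}$, which is $\leq\kappa'$-closed and hence $\leq\kappa'$-distributive. So the distributive argument you use for the later tail already applies here; no surgery is needed. More to the point, the surgery of Theorem~\ref{th:addCohenSubsetsIndestructible} does \emph{not} transfer ``exactly'' to this situation. That argument is tailored to $\Add(\overline\kappa,\overline\theta)$: it exploits the product structure (the support in the $\overline\theta$-direction) and, crucially, the fact that conditions have size strictly less than the critical point $\overline\kappa$. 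For $\Add(\overline\kappa^+,1)$ conditions may have size exactly $\overline\kappa$, so for $p\in\overline g$ with $|\text{dom}(p)|=\overline\kappa$ one has $\sigma(\text{dom}(p))\supsetneq\sigma\image\text{dom}(p)$, and the domain of $P=\bigcup\sigma\image\overline g$ is not simply $\sigma\image\overline\kappa^+$. Your claim that the modification is ``governed by a set of size $\overline\kappa$'' does not follow from the computation in Theorem~\ref{th:addCohenSubsetsIndestructible}. The fix is simply to fold stage $\overline\kappa$ into the tail and handle the whole $\leq\overline\kappa$-closed block at once via the distributive argument.

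\textbf{The complication: working at $\lambda$ rather than above it.} The paper avoids your class-length iterations by passing first to a $(\kappa,\theta)$-remarkable pair with $\theta\gg\lambda$. Then $\overline\lambda<\alpha=\ORD^M$, the forcing $\overline\p_{\overline\lambda}$ is a \emph{set} in $M$, and one factors it once as $\overline\p_{\overline\kappa}*\overline\p_{\text{tail}}$ with $\overline\p_{\text{tail}}$ $\leq\overline\kappa$-closed. One lifts $\sigma$ through $\overline\p_{\overline\kappa}$ trivially (conditions are fixed), then through the single $\leq\overline\kappa$-closed tail by the argument of Theorem~\ref{th:distributiveIndestructible}, and finally restricts both embeddings from $\theta$ down to $\lambda$. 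This sidesteps your proper-class tail $\overline\p^{N_0}_{>\kappa'}$ and the attendant questions about its definable dense classes; what you wrote there can be made precise, but it is extra work that the paper's detour through $\theta$ renders unnecessary.
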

\begin{proof}
Observe that if $G\subseteq\p$ is $V$-generic and $\lambda$ is a cardinal in $V[G]$, then $H_\lambda^{V[G]}=H_\lambda^{V[G_\lambda]}$, where $G_\lambda$ is $V$-generic for the initial segment $\p_\lambda$ of $\p$. Thus, fixing $\lambda>\kappa$, it suffices to show that whenever $\lambda$ is a regular cardinal in some forcing extension $V[G_\lambda]$ by $\p_\lambda$, then $V[G_\lambda]$ has a $(\kappa,\lambda)$-remarkable pair of embeddings. So let's suppose towards a contradiction there is a $\lambda$ such that some condition $q\in \p_\lambda$ forces that $\lambda$ is regular and there is no $(\kappa,\lambda)$-remarkable pair of embeddings. Fix a cardinal $\theta\!>\!>\!\lambda$. Observe that $\p_\lambda$ is countably closed and it factors as $\p_\kappa*\p_\tail$, where $\p_\kappa\subseteq V_\kappa$ and $\p_\tail$ is $\leq\kappa$-closed. Following the proof of Theorem~\ref{th:distributiveIndestructible} and adopting its notation, we obtain a $(\kappa,\theta)$-remarkable pair of embeddings
\begin{displaymath}
\pi:M\to H_\theta\text{ and }\sigma:M\to N
\end{displaymath}
in $V$ such that $\pi(\overline\kappa)=\kappa$, $\pi(\overline\lambda)=\lambda$, $\pi(\overline q)=q$, and
\begin{displaymath}
N=\{\sigma(f)(a)\mid a\in S^{\lt\omega},\,f\in M\},
\end{displaymath}
where $S=V_{\sigma(\overline\kappa)}^N\union\{\sigma(\overline\kappa)\}$. By choosing a $V$-generic $G\subseteq \p_\lambda$ containing $q$ that is also $\pi\image M$-generic, we lift $\pi$ to
\begin{displaymath}
\pi:M[\overline G]\to H_\theta[G],
\end{displaymath}
where $\overline G\subseteq \overline\p_{\overline\lambda}$, the canonical forcing of the $\GCH$ up to $\overline\lambda$ from the perspective of $M$, and we lift $\sigma$ to
\begin{displaymath}
\sigma:M[\overline G_{\overline\kappa}][\overline G_\tail]\to N[G'_{\sigma(\overline\kappa)}][G'_\tail],
\end{displaymath}
where $\overline G$ factors as $\overline G_{\overline\kappa}*\overline G_\tail$ and $\overline G\subseteq G'_{\sigma(\overline\kappa)}$. Since $q$ forces that $\lambda$ remains regular, it follows that $\overline q$ forces that $\overline\lambda$ remains regular, meaning that $\overline\lambda$ is regular in $M[\overline G]$, and therefore must remain regular in $N[G']$, where $G'=G'_{\sigma(\overline\kappa)}*G'_\tail$, since the forcing beyond $\overline\lambda$ is sufficiently closed. Also for this reason, $H_{\overline\lambda}^{M[\overline G]}=H_{\overline\lambda}^{N[G']}$. Thus, the restrictions
\begin{displaymath}
\pi:H^{M[\overline G]}_{\overline\lambda}\to H_{\lambda}^{V[G]}
\end{displaymath}
and
\begin{displaymath}
\sigma:H^{M[\overline G]}_{\overline\lambda}\to H_{\sigma(\overline\lambda)}^{N[G']}
\end{displaymath}
is a $(\kappa,\lambda)$-remarkable pair of embeddings. Thus, we have produced a $(\kappa,\lambda)$-remarkable pair of embeddings in a forcing extension $V[G]$ with $q\in G$, which is the desired contradiction.
\end{proof}
\section{Applications of indestructibility}\label{sec:applications}
In this section, we give two applications of the indestructibility provided by the main theorem. We show that it is consistent to realize any possible continuum pattern above a remarkable cardinal. Using techniques developed recently in~\cite{ChengFriedmanHamkins:LargeCardinalsNeedNotBeLargeInHOD}, we show that it is consistent to have a remarkable cardinal that is not remarkable, and indeed not even weakly compact in ${\rm HOD}$.

Let's define that a (possibly partial) class function $F$ on the regular cardinals is an \emph{Easton} function if for all $\alpha<\beta$ in the domain of $F$, we have $F(\alpha)\leq F(\beta)$ and for all $\alpha$ in the domain of $F$, we have $\text{cf}(F(\alpha))>\alpha$. By the Zermelo-\Konig\ inequality, any continuum pattern on the regular cardinals is described by an Easton function.
\begin{theorem}
Suppose that $\kappa$ is remarkable and the $\GCH$ holds. If $F$ is any Easton function with
\hbox{$\text{dom}(F)\cap \kappa=\emptyset$}, then there is a class forcing extension in which $\kappa$ remains remarkable and for all $\alpha\in\text{dom}(F)$, we have $2^\alpha=F(\alpha)$.
\end{theorem}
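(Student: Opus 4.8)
The plan is to first apply the Main Theorem to make $\kappa$ indestructibly remarkable, and then realize $F$ with the usual Easton-support product, arranged so that its set-sized initial segments fall under the Main Theorem.

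First I would use the Main Theorem to fix a forcing extension $V[G_0]$, by a forcing $\p\of V_\kappa$ of size $\kappa$, in which $\kappa$ is remarkable and its remarkability is indestructible by all $\ltkappa$-closed $\lesseq\kappa$-distributive forcing and by all two-step iterations $\Add(\kappa,\theta)*\dot{\mathbb R}$ with $\dot{\mathbb R}$ forced to be $\ltkappa$-closed and $\lesseq\kappa$-distributive. Since $\p$ has size $\kappa$ and is $\kappa$-cc, it preserves cofinalities $\geq\kappa$, it preserves inaccessibility of $\kappa$ (as noted in the proof of Theorem~\ref{th:distributiveIndestructible}), and by a routine nice-name count it preserves $\GCH$ at and above $\kappa$. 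Thus in $V[G_0]$ the cardinal $\kappa$ is still inaccessible, $\GCH$ holds at and above $\kappa$, and $F$ remains an Easton function (this involves only regular cardinals $\geq\kappa$, since $\text{dom}(F)\cap\kappa=\emptyset$).

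Next, working in $V[G_0]$, I would let $\mathbb{E}=\prod_{\alpha\in\text{dom}(F)}^{\text{Easton}}\Add(\alpha,F(\alpha))$ be the standard Easton-support product realizing $F$, and for a regular cardinal $\lambda$ write $\mathbb{E}=\mathbb{E}_{<\lambda}\times\mathbb{E}_{\geq\lambda}$ for the splitting into stages below $\lambda$ and stages at or above $\lambda$. Since $\text{dom}(F)\cap\kappa=\emptyset$, every coordinate lies at a regular cardinal $\geq\kappa$, so for regular $\lambda>\kappa$ the initial segment $\mathbb{E}_{<\lambda}$ further splits as $\mathbb{E}_{\leq\kappa}\times\mathbb{E}_{(\kappa,\lambda)}$, where $\mathbb{E}_{(\kappa,\lambda)}$ is $\lesseq\kappa$-closed in $V[G_0]$ (its coordinates are $\lt\alpha$-closed for $\alpha\geq\kappa^+$, and Easton support preserves lower bounds of $\leq\kappa$-sequences) and $\mathbb{E}_{\leq\kappa}$ is either trivial or $\Add(\kappa,F(\kappa))$. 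If $\kappa\notin\text{dom}(F)$, then $\mathbb{E}_{<\lambda}$ is $\lesseq\kappa$-closed. If $\kappa\in\text{dom}(F)$, then $\mathbb{E}_{<\lambda}=\Add(\kappa,F(\kappa))*\check{\mathbb{E}}_{(\kappa,\lambda)}$; since $\kappa$ is inaccessible in $V[G_0]$, the poset $\Add(\kappa,F(\kappa))$ has the $\kappa^+$-cc, so by Easton's lemma it forces $\check{\mathbb{E}}_{(\kappa,\lambda)}$ to remain $\lesseq\kappa$-distributive, and it forces $\check{\mathbb{E}}_{(\kappa,\lambda)}$ to remain $\ltkappa$-closed since it is $\ltkappa$-closed itself. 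In either case $\mathbb{E}_{<\lambda}$ has a form covered by the Main Theorem, so $\kappa$ remains remarkable in $V[G_0][\mathbb{E}_{<\lambda}]$.

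Finally I would pass from the set-sized initial segments to the full class forcing exactly as in the preceding theorem. For $V$-generic $G_1\of\mathbb{E}$ and regular $\lambda>\kappa$, the factor $\mathbb{E}_{\geq\lambda}$ is $\lt\lambda$-closed, so $H_\lambda^{V[G_0][G_1]}=H_\lambda^{V[G_0][(G_1)_{<\lambda}]}$, and this persists after the small forcing $\Coll(\omega,\ltkappa)$; hence any $(\gamma,\overline\lambda,\kappa,\lambda)$-remarkable embedding witnessing the remarkability of $\kappa$ in a $\Coll(\omega,\ltkappa)$-extension of $V[G_0][(G_1)_{<\lambda}]$ already witnesses it for $V[G_0][G_1]$. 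So $\kappa$ is remarkable in $V[G_0][G_1]$, which is a class forcing extension of $V$. That $2^\alpha=F(\alpha)$ for every $\alpha\in\text{dom}(F)$ there is the standard verification of Easton's theorem, using that $\GCH$ holds at and above $\kappa$ and $\kappa$ is inaccessible in $V[G_0]$ (so $2^{\lt\alpha}=\alpha$ for $\alpha\in\text{dom}(F)$), that $\mathbb{E}_{<\alpha}$ is $\alpha$-cc of size $\leq F(\alpha)$, that $\Add(\alpha,F(\alpha))$ adds exactly $F(\alpha)$ subsets of $\alpha$ and is $\alpha^+$-cc, and that the stages above $\alpha$ form a $\lesseq\alpha$-closed forcing. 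The main obstacle is the bookkeeping needed to see that the Easton tail continues to meet the hypotheses of the Main Theorem after forcing with $\Add(\kappa,F(\kappa))$ --- which is exactly Easton's lemma --- together with the routine reflection of remarkability from the set-sized initial segments of $\mathbb{E}$ to $\mathbb{E}$ itself.
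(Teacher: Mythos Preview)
Your proof is correct and follows essentially the same approach as the paper's: prepare with the Main Theorem, factor the Easton product above $\kappa$ as $\Add(\kappa,F(\kappa))$ times a $\lesseq\kappa$-closed tail, invoke Easton's Lemma to see the tail is $\lesseq\kappa$-distributive after the Cohen forcing, and then reflect remarkability from the set-length initial segments to the full product using that the high tail is $\lt\lambda$-closed. You are in fact a bit more careful than the paper in verifying that the preparation preserves $\GCH$ at and above $\kappa$ (needed for the classical Easton argument to go through) and in treating the case $\kappa\notin\text{dom}(F)$ separately.
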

\begin{proof}
By doing a preparatory forcing, if necessary, we can assume that the remarkability of $\kappa$ is indestructible by all $\ltkappa$-closed $\lesseq\kappa$-distributive forcing and all two-step iterations $\Add(\kappa,\theta)*\dot{\mathbb R}$, where $\dot{\mathbb R}$ is forced to be $\ltkappa$-closed and $\lesseq\kappa$-distributive (Theorem~\ref{th:main}). Let $\p$ be the Easton-support product
\begin{displaymath}
\prod_{\alpha\in\text{dom}(F)}\Add(\alpha,F(\alpha))
\end{displaymath}
and let $G\subseteq\p$ be $V$-generic. A classical argument of Easton shows that the continuum function on the regular cardinals $\alpha\geq\kappa$ agrees with $F$. So it remains to argue that $\kappa$ is remarkable in $V[G]$. Observe that every set-length initial segment
\begin{displaymath}
\p_\gamma=\prod_{\alpha\in\text{dom}(F),\alpha<\gamma}\Add(\alpha,F(\alpha))
\end{displaymath}
of $\p$ ($\gamma>\kappa$) factors as
\begin{displaymath}
\Add(\kappa,F(\kappa))\times \q,
\end{displaymath}
where $\q$ is forced to be $\ltkappa$-closed and $\lesseq\kappa$-distributive. The poset $\q$ is $\lesseq\kappa$-distributive after forcing with $\Add(\kappa,F(\kappa))$ by Easton's Lemma\footnote{Easton's Lemma states that if $\p$ has the $\kappa^+$-cc and $\q$ is ${\leq}\kappa$-closed, then $\q$ remains $\lesseq\kappa$-distributive after forcing with $\p$.}, and since $\q$ is $\ltkappa$-closed in $V$, it remains $\ltkappa$-closed after forcing with $\Add(\kappa,F(\kappa))$ because that poset does not add sequences of length $\ltkappa$. Thus, by our indestructibility assumption, $\kappa$ is remarkable in every $V[G_\gamma]$, where $G_\gamma$ is the restriction of $G$ to $\p_\gamma$, and the existence of a $(\kappa,\lambda)$-remarkable pair of embeddings cannot be destroyed by sufficiently closed forcing.
\end{proof}
Next, we sketch the argument that a remarkable cardinal need not be remarkable in ${\rm HOD}$. For details, we refer the reader to~\cite{ChengFriedmanHamkins:LargeCardinalsNeedNotBeLargeInHOD}.
\begin{theorem}
If $\kappa$ is remarkable, then there is a forcing extension in which $\kappa$ is remarkable, but not weakly compact in ${\rm HOD}$.
\end{theorem}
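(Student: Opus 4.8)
The plan is to combine our indestructibility theorem with the ${\rm HOD}$-coding construction of \cite{ChengFriedmanHamkins:LargeCardinalsNeedNotBeLargeInHOD}. First I would pass, via Theorem~\ref{th:main}, to a forcing extension in which the remarkability of $\kappa$ is indestructible by all $\ltkappa$-closed $\lesseq\kappa$-distributive forcing and by all two-step iterations $\Add(\kappa,\theta)*\dot{\mathbb R}$ with $\dot{\mathbb R}$ forced $\ltkappa$-closed and $\lesseq\kappa$-distributive; by a further application (or using the preservation of remarkability under the canonical forcing of the $\GCH$) I may also arrange that $\GCH$ holds above $\kappa$. Over this model I would then run the coding forcing $\mathbb P$ of \cite{ChengFriedmanHamkins:LargeCardinalsNeedNotBeLargeInHOD}. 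The effect of $\mathbb P$ is to imprint, on the continuum function at a carefully chosen set of cardinals, enough information that in the extension $V[G]$ some object $A$ --- a non-reflecting stationary subset of $\kappa\cap\mathrm{cof}(\omega)$ together with the clubs witnessing its non-reflection, or a $\square_\kappa$-like sequence --- becomes ordinal-definable, and hence lies in ${\rm HOD}^{V[G]}$. Because the continuum function is definable without parameters, $A\in{\rm HOD}^{V[G]}$; because $A$ witnesses the failure of weak compactness at $\kappa$ and ${\rm HOD}^{V[G]}$ computes clubs and stationarity on $\kappa$ at least as generously as $V[G]$ does for the relevant sets, $\kappa$ fails to be weakly compact in ${\rm HOD}^{V[G]}$. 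Meanwhile $\kappa$ itself remains remarkable --- hence weakly compact --- in $V[G]$, and there is no contradiction, since the coded object is arranged to be harmless in the larger model $V[G]$ (for instance the ``non-reflecting stationary set'' is actually non-stationary in $V[G]$, killed by a club lying outside ${\rm HOD}$).

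The two points that must be checked are that $\mathbb P$ keeps $\kappa$ remarkable and that it genuinely corrupts ${\rm HOD}$ at $\kappa$. For the first, one factors $\mathbb P$ as a preliminary small, or $\Add(\kappa,\theta)*\dot{\mathbb R}$-type, forcing at or below $\kappa$ followed by a highly homogeneous, sufficiently closed tail above $\kappa$, and invokes the indestructibility secured in the first step together with the fact --- used already in the $\GCH$-pattern application and in the preservation theorem for the canonical $\GCH$ forcing --- that the existence of a $(\kappa,\lambda)$-remarkable pair of embeddings cannot be destroyed by sufficiently closed forcing. For the second, one uses the homogeneity of the tail to see that ${\rm HOD}^{V[G]}$ is ``thin'', acquiring no ordinal-definable sets beyond those forced by the coding segment, so that the coded witness $A$ really does witness the failure of weak compactness there, rather than being neutralized by extra clubs or branches available only in the fuller model $V[G]$.

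The main obstacle is the tension between these two requirements. Remarkable cardinals are, as far as is known, much less robustly indestructible than supercompact cardinals, so the coding of \cite{ChengFriedmanHamkins:LargeCardinalsNeedNotBeLargeInHOD} must be carried out within the comparatively narrow class of forcings --- $\ltkappa$-closed $\lesseq\kappa$-distributive posets and $\Add(\kappa,\theta)*\dot{\mathbb R}$ iterations --- for which Theorem~\ref{th:main} guarantees survival; and the combinatorial witness must be chosen so that the relevant absoluteness runs in the right direction, i.e., so that it defeats weak compactness in the thinner model ${\rm HOD}^{V[G]}$ while remaining inert in $V[G]$. Verifying that the construction of \cite{ChengFriedmanHamkins:LargeCardinalsNeedNotBeLargeInHOD} can be set up to meet both constraints at once is where the real work lies, and for the details I refer the reader to \cite{ChengFriedmanHamkins:LargeCardinalsNeedNotBeLargeInHOD}.
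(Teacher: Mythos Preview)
Your high-level strategy matches the paper's: prepare $\kappa$ for indestructibility via Theorem~\ref{th:main}, then run a forcing that (i) codes a combinatorial witness to non-weak-compactness into the continuum pattern, so it lands in ${\rm HOD}$, while (ii) the full extension neutralizes that witness by a homogeneous step whose generic stays out of ${\rm HOD}$. Where you diverge is in the choice of witness, and this is where the argument actually has content.

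The paper does \emph{not} use a non-reflecting stationary set or a $\square_\kappa$-sequence. A $\square_\kappa$-sequence lives on $\kappa^+$ and says nothing about weak compactness \emph{at} $\kappa$, so that suggestion does not work. The non-reflecting stationary set idea is closer in spirit, but you would then need the analogue of the key factoring step --- that adding the set and then shooting a club through its complement is forcing-equivalent to $\Add(\kappa,1)$ --- and you would have to verify that the coding forcing remains $\ltkappa$-closed and $\lesseq\kappa$-distributive after the club is shot. You do not do this; you simply defer to \cite{ChengFriedmanHamkins:LargeCardinalsNeedNotBeLargeInHOD}. The paper's actual witness is a homogeneous $\kappa$-Souslin tree $T$ added by a forcing $\q$. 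The crucial fact (Kunen) is that $\q*\dot T\cong\Add(\kappa,1)$, so after also forcing the branch $b$ and the $\lesseq\kappa$-closed coding $\mathbb R$ for $T$, the whole extension can be rearranged as $\Add(\kappa,1)*\dot{\mathbb R}$ with $\dot{\mathbb R}$ still $\ltkappa$-closed and $\lesseq\kappa$-distributive over $V[T][b]$ (via Easton's Lemma and $\lt\kappa$-distributivity of $T$). This lands exactly inside the indestructibility of Theorem~\ref{th:main}. Homogeneity of $T$ as a forcing gives ${\rm HOD}^{V[T][b][H]}\subseteq V[T][H]$, and since $T$ is coded and still Souslin in $V[T][H]$, $\kappa$ fails to be weakly compact in ${\rm HOD}$.

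So the gap in your proposal is that the ``real work'' you acknowledge --- fitting the coding construction into the narrow indestructibility window --- is precisely what the Souslin-tree choice accomplishes cleanly, and your proposed witnesses either do not accomplish it ($\square_\kappa$) or require a parallel argument you have not supplied.
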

\begin{proof}
By doing a preparatory forcing, if necessary, we can assume that the remarkability of $\kappa$ is indestructible by all $\ltkappa$-closed $\lesseq\kappa$-distributive forcing and all two-step iterations $\Add(\kappa,\theta)*\dot{\mathbb R}$, where $\dot{\mathbb R}$ is forced to be $\ltkappa$-closed and $\lesseq\kappa$-distributive (Theorem~\ref{th:main}). Let $\q$ be the forcing to add a homogeneous $\kappa$-Souslin tree and consider the two-step iteration $\q* \dot T$, where $\dot T$ is the canonical $\q$-name for the $\kappa$-Souslin tree added by $\q$. It is a classical result that this two-step iteration is forcing equivalent to $\Add(\kappa,1)$~\cite{kunen:weakcompacts_are_not_downward_absolute}. Let $T*b\subseteq \q*\dot T$ be $V$-generic.
In $V[T]$, we force with the standard $\GCH$ coding forcing $\mathbb R$ to code $T$ into the continuum pattern above $\kappa$ and let $H\subseteq \mathbb R$ be $V[T]$-generic. Note that the $V[T]$-generic $b$ is also $V[T][H]$-generic because $T$ is still a $\kappa$-Souslin tree in $V[T][H]$ (because $\mathbb R$ is $\lesseq\kappa$-distributive) and every branch of a $\kappa$-Souslin tree is generic. Thus, $H$ and $b$ are mutually generic, giving us that $V[T][H][b]=V[T][b][H]$. Also, since $T$ has size $\kappa$, and so obviously has the $\kappa^+$-cc, $\mathbb R$ is $\lesseq\kappa$-distributive in $V[T][b]$ by Easton's Lemma. Finally observe that $\mathbb R$ remains $\ltkappa$-closed in $V[T][b]$ because the forcing $T$ is $\lt\kappa$-distributive. Thus, the combined forcing can be viewed as a two-step iteration of the form $\Add(\kappa,1)*\dot {\mathbb R}$, where $\dot{\mathbb R}$ is forced by $\Add(\kappa,1)$ to be $\ltkappa$-closed and $\lesseq\kappa$-distributive. So, by our indestructibility assumption, $\kappa$ is remarkable in $V[T][b][H]$, but we will argue that it is not weakly compact in ${\rm HOD}^{V[T][b][H]}$. Since $V[T][b][H]=V[T][H][b]$ and the forcing $T$ is weakly homogeneous in $V[T][H]$, ${\rm HOD}^{V[T][b][H]}\subseteq V[T][H]$. The tree $T$ is an element of ${\rm HOD}^{V[T][b][H]}$ because it was coded into the continuum pattern, and it is $\kappa$-Souslin there because it is $\kappa$-Souslin in $V[T][H]$. Thus, $\kappa$ is not weakly compact in ${\rm HOD}^{V[T][b][H]}$.
\end{proof}
\section{Questions}\label{sec:questions}
In this article, we showed that remarkable cardinals have Laver-like functions and adapted techniques for making strong cardinals indestructible to the context of remarkable cardinals. It is therefore reasonable to think that remarkable cardinals can be made indestructible by all weakly $\lesseq\kappa$-\emph{closed forcing with the Prikry property}, the class of forcing notions by which Gitik and Shelah showed that strong cardinals can be made indestructible.\footnote{Consider triples of the form $\la \p,\leq,\leq_*\ra$, where $\p$ is a set and $\leq_*$, $\leq$ are two partial orders on $\p$ such that $\leq_*\subseteq \leq$, meaning that $p\leq_*q\longrightarrow p\leq q$ for every $p,q\in \p$. We will force with $\la \p,\leq\ra$. Such a triple is said to have the \emph{Prikry property} if for every $p\in \p$ and every statement $\varphi$ of the forcing language, there exists $q\leq_*p$ such that $q\forces \varphi$ or $q\forces \neg\varphi$. Let $\alpha$ be a cardinal. Such a triple is said to be $\lesseq\alpha$-\emph{weakly closed} if $\leq_*$ is $\lesseq\alpha$-closed.} All $\lesseq\kappa$-closed forcing are $\lesseq\kappa$-weakly closed with the Prikry property (with $\leq_*$ defined to be same as $\leq$) and it is not difficult to see that all $\lesseq\kappa$-weakly closed forcing with the Prikry property are $\lesseq\kappa$-distributive.
\begin{question}
Can a remarkable $\kappa$ be made indestructible by all weakly $\lesseq\kappa$-closed forcing with the Prikry property?
\end{question}
It is also feasible that remarkable cardinals have much stronger indestructibility properties, akin to those of strongly unfoldable cardinals, as shown by Johnstone and Hamkins~\cite{hamkinsjohnstone:unfoldable}.
\begin{question}\label{ques:fullIndestructibility}
Can a remarkable $\kappa$ be made indestructible by all $\ltkappa$-closed $\kappa^+$-preserving forcing?
\end{question}
\noindent It should be noted that the indestructibility requested in Question~\ref{ques:fullIndestructibility} is optimal because the existence of a weakly compact cardinal $\kappa$ that remains weakly compact in some forcing extension by $\ltkappa$-closed forcing that collapses $\kappa^+$ implies the failure of $\square_\kappa$ in $V$, and therefore has the strength of at least infinitely many Woodin cardinals (see \cite{hamkinsjohnstone:unfoldable} for details).

\bibliographystyle{alpha}
\bibliography{remarkable}
\end{document}